\newlength\myheight
\newlength\mydepth
\settototalheight\myheight{Xygp}
\newtheorem{theorem}{Theorem}[section]
\newtheorem{lemma}[theorem]{Lemma}
\newtheorem{proposition}[theorem]{Proposition}
\newtheorem{corollary}[theorem]{Corollary}
\theoremstyle{definition}
\newtheorem{example}[theorem]{Example}
\theoremstyle{definition}
\newtheorem{remark}[theorem]{Remark}
\newtheorem{remarks}[theorem]{Remarks}
\newtheorem{definition}[theorem]{Definition}
\DeclareMathOperator{\id}{id}
\DeclareMathOperator{\idd}{id}
\newcommand{\antishriek}{\text{\raisebox{\depth}{\textexclamdown}}}
\newcommand{\D}{\mathcal{\D}}
\newcommand{\C}{\mathcal{\C}}
\definecolor{red}{rgb}{1,0.1,0.1}
\definecolor{blue}{rgb}{0.1,0.1,1}
\definecolor{green}{rgb}{0,100,0}
\begin{document}

\title{Higher-order Massey products for algebras over algebraic operads}

\author{Oisín Flynn-Connolly, José M. Moreno-Fernández}

\date{}
\maketitle
\abstract{We introduce higher-order Massey products for algebras over algebraic operads.
This extends the work of Fernando Muro on secondary ones.
We study their basic properties and behavior with respect to morphisms of algebras 
and operads and give some connections to formality. 
We prove that these higher-order operations represent the differentials 
in a naturally associated operadic Eilenberg--Moore spectral sequence.
We also study the interplay between particular choices of higher-order Massey products and quasi-isomorphic 
$\mathcal P_\infty$-structures on the homology of a $\mathcal P$-algebra.
We focus on Koszul operads over a characteristic zero field 
and explain how our results generalize to the non-Koszul case.}

\section{Introduction}

In \cite{Mas69}, reprinted as \cite{Mas98},
W. S. Massey introduced the classical triple Massey product, 
a secondary operation on the (co)homology of differential graded associative algebras.
He used this new operation to show that the Borromean rings are non-trivially linked.
Similar secondary operations were defined independently by Allday and Retah on the homology of differential graded Lie algebras,
see \cite{Allday73,Allday77,Ret77}.
The existence of these higher-order products is due to the vanishing of certain equations
that follow from the associativity and Jacobi relations at the chain level, respectively.
Recently, 
F. Muro has shown that secondary operations analogous to Massey's in the case of associative algebras 
on the homology of differential graded algebraic structures are not ad-hoc at all \cite{Mur21}.
Indeed, the theory of algebraic operads explains and organizes the existence and construction of these operations.
An algebraic operad is an operad in the symmetric monoidal category of 
$\mathbb Z$-graded vector spaces over a 
characteristic zero field, and will be assumed to be Koszul.
In \emph{loc. cit.}, 
Muro defines secondary Massey products for algebras over algebraic operads.
Given an algebraic operad $\mathcal P$,
each quadratic relation in the presentation of $\mathcal P$
defines a secondary Massey-product-like operation on the homology of the $\mathcal P$-algebras.
This secondary operation takes as many inputs as the arity of the relation.
In this way, 
the associativity relation of the associative operad yields the classical triple Massey products,
while the Jacobi identity relation of the Lie operad yields the Lie--Massey brackets.
Under this new point of view, 
Muro uncovered secondary Massey-product-like operations for
many distinct types of algebras for the first time, and gave applications to hyper-commutative and Gerstenhaber algebras.

\medskip

In Muro's paradigm, there is no restriction as to the arity of the relation. 
Thus, a relation $\Gamma$ of arity $r$ in a presentation of an operad $\mathcal P$ 
produces a Massey-product-like operation  with $r$ inputs $\langle -,...,- \rangle_\Gamma$ on the homology of the $\mathcal P$-algebras.
However, this still left the definition of \emph{higher-order} Massey product operations unclear.
This is where our work enters the picture.
It is well-known that the triple Massey product is just the first in an infinite series of higher-order operations
on the homology of differential graded associative algebras, roughly 
witnessing the different ways in which an $n$-fold product in homology vanishes as a consequence of associativity.
These higher-order products have been shown many times 
to be essential in a wide range of topics where triple Massey products are not enough,
see for example the survey \cite{Limonchenko}.
In particular, they are concrete tools for computations when a fully-fledged $A_\infty$-structure is not available.

\medskip

In this work, we introduce and study \emph{higher-order Massey products for algebras over algebraic operads}.
These higher operations include Muro's secondary ones,
and gather together to form the hierarchy of higher operations on the homology of algebras 
over algebraic operads mentioned before.
Our approach generalizes the fruitful framework of higher-order Massey products for 
differential graded associative algebras to algebras over any algebraic operad,
producing a new tool to perform computations in many kinds of differential graded algebras.

The importance of these higher-order operations seems to have been neglected 
due to a widespread misconception.
This misconception consists of thinking that,
whenever a higher-order Massey product set $\langle x_1,...,x_r\rangle$ 
on the homology of a differential graded associative algebra is defined,
then any transferred $A_\infty$-structure $\left\{m_r\right\}$ on the homology of this differential algebra 
via the homotopy transfer theorem satisfies 
\begin{equation}
    \label{ecu : Intro}    
\pm m_r\left(x_1,...,x_r\right) \in \langle x_1,...,x_r\rangle.
\end{equation}
This is true only for the triple Massey product, 
but fails in general \cite{jose20}.
Algebras over operads other than the associative one behave in the same manner (Theorem \ref{thm: P-infinity and recovery of Massey}).
This fact makes the higher-order operations defined in this paper important, 
filling a fundamental gap in the understanding of the homology of differential graded algebraic structures.
Being slightly more precise, 
we show that if the homology of an algebra over a Koszul operad $\mathcal P$
is endowed with a $\mathcal P_\infty$-algebra structure quasi-isomorphic to the original structure,
then the $\mathcal P_\infty$-algebra structure maps recover  higher-order Massey 
products only up to lower-arity $\mathcal P_\infty$-algebra structure maps.
We also prove, however, a positive result in this direction:  
for any choice of class in a higher-order Massey product set,
one can make appropriate choices in the homotopy transfer theorem so that the induced 
$\mathcal P_\infty$ structure on the homology of the $\mathcal P$-algebra 
recovers this choice exactly by Formula \eqref{ecu : Intro}.

\medskip

Let us briefly explain how these higher-order Massey products arise.
Let $\mathcal P$ be a Koszul operad with Koszul dual cooperad $\mathcal P^\antishriek$ 
(we explain in Remark \ref{remark: Massey products for non-Koszul operads} how to deal with the non-Koszul case).
Each weight-homogeneous cooperation $\Gamma^c$ of $\mathcal P^\antishriek$ gives rise to a partially defined
higher operation $\langle -,...,-\rangle_{\Gamma^c}$ on the homology of any $\mathcal P$-algebra.
The number of inputs of this operation is the arity $r$ of $\Gamma^c$.   
If $A$ is a $\mathcal P$-algebra, then out of homogeneous elements $x_1,...,x_r \in H_*(A)$,
the operation gives a (possibly empty) set of homology classes
$$\langle x_1,...,x_r\rangle_{\Gamma^c} \subseteq H_*(A).$$
The non-emptiness depends on the vanishing,
in a precise sense,
of higher operations of the same kind that arise from $\Gamma^c$ and have strictly lower weight-degree.
We call $\langle x_1,...,x_r\rangle_{\Gamma^c}$ the \emph{$\Gamma^c$-Massey product} of the classes $x_1,...,x_r$.
The process to construct the $\Gamma^c$-Massey product operation $\langle -,...,-\rangle_{\Gamma^c}$ 
is done by a non-trivial analogy with the case of differential graded associative algebras.
To wit, the cooperation $\Gamma^c$ determines a set of indices $I\left(\Gamma^c\right)$ 
which is then used to form \emph{defining systems}.
Fixed a $\mathcal{P}$-algebra $A$ and homogeneous elements $x_1,...,x_r\in H_*(A)$, where $r$ is the arity of $\Gamma^c$,
a defining system for the $\Gamma^c$-Massey product $\langle x_1,...,x_r\rangle_{\Gamma^c}$ is a coherent choice of elements 
 $\left\{a_\alpha\right\}$ of $A$ indexed by $I\left(\Gamma^c\right)$ 
that conspire together to create a cycle.
Running over all possible choices of defining systems for $x_1,...,x_r$,
we obtain all possible representatives of the 
homology classes in the set $\langle x_1,...,x_r\rangle_{\Gamma^c}$.
This construction is the core of the paper, 
and it is performed in Section \ref{sec : higher-order operadic Massey products}.
Since the details are quite technical, 
we skip them for the moment and refer the reader to the mentioned section.
There, 
we give explicit examples,
including the case of the associative, commutative, Lie, and dual numbers operads.
We prove that our framework generalizes 
Muro's in Proposition \ref{prop : Our products coincide with Muro's}.
In Section \ref{sec: Elementary properties of the operadic Massey products}, 
we study the basic properties enjoyed by these new operations.
For example, 
we prove
that morphisms of $\mathcal P$-algebras preserve higher-order Massey products, 
and that quasi-isomorphisms induce a bijective correspondence between them.
This makes the higher-order Massey products a useful tool in the 
study of homotopy types of algebras over operads;
in particular, they can be used to study formality-type results.
In Section \ref{sec : Massey products along morphisms of operads and formality},
we explain how higher-order Massey products behave with respect to morphisms of operads.
Under mild assumptions,
higher-order Massey products can
be pulled back and forward along morphisms of operads.
This allows one to relate the
formality (or more generally, the quasi-isomorphism class)
of an algebra of a certain type
to the formality (or quasi-isomorphism class) of a functorially associated algebra of a distinct type.
The reader can have in mind the adjoint pair between taking the universal enveloping 
differential graded associative algeba of a differential graded Lie algebra, 
and forming the commutator bracket of a dg associative algebra.
Under some hypotheses, one can relate formality and quasi-isomorphism classes in both directions.

\medskip

We prove some further results related to higher-order Massey products.
It is a well-known and celebrated result 
that higher-order Massey products for associative algebras 
provide a concrete description of the differentials in the Eilenberg--Moore spectral sequence.
In Section \ref{sec: construction of the EMSS}, we
explain how to construct an Eilenberg--Moore-type spectral sequence for any algebra 
over an algebraic operad.
Under mild hypotheses, 
this spectral sequence computes the Quillen homology of the algebras over this operad.
The spectral sequence is then exploited in Section \ref{sec: differentials EMSS}.
Our main result in this direction is Theorem \ref{Thm : Differentials EMSS and Massey products},
which proves that the higher-order Massey products defined in this paper 
provide concrete representatives for the differentials in this Eilenberg--Moore-type spectral sequence.
To finish the paper, 
we give in Section \ref{sec : Massey products and P-infinity algebras} 
a precise relationship between the higher-order Massey products 
on the homology of a $\mathcal P$-algebra,
and transferred $\mathcal P_\infty$-structures on it.

\medskip
\noindent {\bf Acknowledgements:}  
The authors would like to thank Coline Emprin, 
Gr\'{e}gory Ginot, Fernando Muro and Pedro Tamaroff for useful conversations and comments;
and furthermore to the anonymous referee for their time, effort, and excellent advice.
This project has received funding from the European Union’s Horizon 2020 research and innovation programme under the Marie Skłodowska-Curie grant agreement No 945322. 
\raisebox{-\mydepth}{\fbox{\includegraphics[height=\myheight]{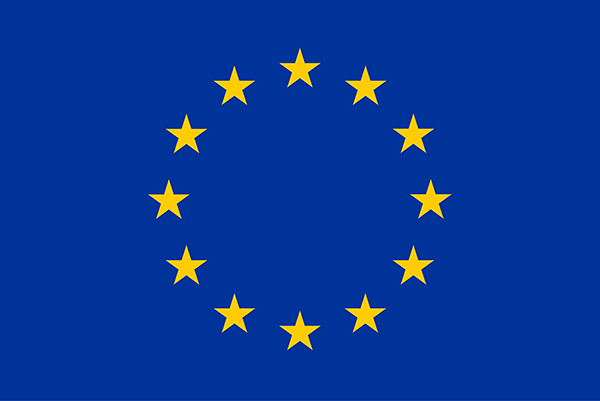}}} 
The second author has been partially supported by the MICINN grant PID2020-118753GB-I00 and by the Junta de Andalucía grant ProyExcel-00827.

\subsubsection*{Notation and conventions}
In this paper,
all algebraic structures are taken over a base field $\Bbbk$ of characteristic zero.
We work on the category of unbounded chain complexes over $\Bbbk$ with homological convention.
That is, 
the differential $d:A_*\to A_{*-1}$ of a chain complex $\left(A,d\right)$
is of degree $-1$.
The degree of a homogeneous element $x$ is denoted by $|x|$.
The suspension  of a chain complex $\left(A,d_A\right)$ is the chain complex $\left(sA,d_{sA}\right)=\left(\Bbbk s\otimes A,1\otimes d\right)$,
where $s$ is a formal variable of degree $1$. 
For a homogeneous element $a\in A$, we denote $sa = s\otimes a \in sA.$
Thus, $\left(sA\right)_* \cong A_{*-1}$, and $d_{sA}\left(sa\right) = -sd_A(a)$ for every such $a\in A.$
The symmetric group on $n$ elements is denoted $\mathbb{S}_n$.
The operads in this paper are taken in the symmetric monoidal category of $\mathbb Z$-graded vector spaces, 
and therefore have zero differential.
In this monoidal category, we follow the Koszul sign rule.
That is, the symmetry isomorphism $U\otimes V \xrightarrow{\cong} V \otimes U$  
that identifies two graded vector spaces is given on homogeneous elements by
$u\otimes v\mapsto (-1)^{|u||v|} v\otimes u$.
Algebras over operads are always differential graded (dg) and homological.
We will frequently omit the adjective "dg" and assume it is implicitly understood.
The reason for choosing the operads to have trivial differential is that 
in this case, 
the homology of any dg $\mathcal P$-algebra is a graded (non-dg) $\mathcal P$-algebra again.
If $f:A\to B$ is a morphism of differential graded algebras over an operad, 
then we denote by $f_* :H_*(A)\to H_*(B)$ the induced map in homology.

\subsection{Preliminaries}

In this section,
we collect some of the prerequisites for understanding this paper. 
We start in Section \ref{sec: Operad prerequisites} by giving a brief 
recollection of the results of operad theory that we will make use of,
mainly to establish our notation.
We borrow most of the notation from \cite{loday12}, which is an excellent reference for algebraic operads.
A non-standard topic explained in this section is the construction of the Eilenberg--Moore-type spectral sequence 
mentioned in the introduction.
In Section \ref{sec: Higher Massey products for associative algebras},
we recall the higher-order Massey products
for differential graded associative algebras.
To finish, 
we briefly summarize in Section \ref{sec: secondary Massey products for algebras over operads} 
the construction of the secondary Massey products for algebras over algebraic operads as defined by Muro in \cite{Mur21}.

\subsubsection{Operadic background}
\label{sec: Operad prerequisites}

In this paper, 
we work with operads
in the symmetric monoidal category of graded vector spaces.
Our generic operad $\mathcal P$ is therefore arity-wise made up of $\mathbb Z$-graded vector spaces, 
but it has no differential.
That is, we work with non-dg operads.
The reason is that if $A$ is a $\mathcal P$-algebra, 
we will need $A$ and its homology $H_*(A)$ to be algebras over the same operad. 
Our operads will always satisfy $\mathcal{P}(0) = 0$,
except for theorems \ref{teo : Homotopy Transfer Theorem} and \ref{thm: P-infinity and recovery of Massey}, 
where they need to be reduced.
Recall that an operad $\mathcal P$ is \emph{reduced} if $\mathcal{P}(0) = 0$ and $\mathcal{P}(1) = \Bbbk$.

\medskip

This paper will assume familiarity with the results and notation from \cite{loday12},
and we will adopt its notation for most of the objects used in this paper 
(infinitesimal compositions, twisting morphisms, weight gradings, and Koszul duality).
We shall briefly sketch only those results that will be essential to understand this paper.

\medskip
\noindent{\bf{Quadratic and Koszul operads.}}
A symmetric sequence $E$ is \emph{reduced} if $E(0)=E(1)=0$.
An operad $\mathcal P$ is \emph{quadratic} if it is given by a presentation 
$\mathcal{F}\left(E,R\right)$, that is, 
if it is given as the quotient $\mathcal{F}(E)/(R)$
of the free operad $\mathcal{F}(E)$ on the reduced symmetric sequence $E$ 
by the operadic ideal of relations generated by a sub $\mathbb S$-module of relations $R\subseteq \mathcal F(E)^{(2)}$.
Here, $\mathcal F(E)^{(n)}$ is the sub $\mathbb S$-module of $\mathcal F(E)$ formed by elements of \emph{weight $n$},
that is, formed by combining exactly $n$ generating operations from $E$.
The free operad $\mathcal{F}(E)$ 
comes equipped with a \emph{weight grading} concentrated in non-negative degrees.
Since the operadic ideal $(R)$ is homogeneous with respect to the weight grading of $\mathcal F(E)$
and $\mathcal P$ is a quotient of $\mathcal{F}(E)$, 
the weight grading of $\mathcal F(E)$ naturally descends to $\mathcal P$.
The degree $n$ component of this weight grading on $\mathcal P$ will be denoted $\mathcal P^{(n)}$.
Similarly, 
one can construct the cofree conilpotent cooperad $\mathcal F^c(E)$.
To do so, 
consider the same underlying symmetric sequence $\mathcal F(E)$,
endowed with the same weight-grading.
Dually, we can consider the conilpotent sub-cooperad $\mathcal{F}^c(E, R)$ of 
$\mathcal F^c(E)$ which is final among the conilpotent sub-cooperads $\mathcal C$ of $\mathcal F^c(E)$ 
equipped with a morphism of $\mathbb S$-modules $\mathcal C \to E$
such that the composite
$$
\mathcal C \hookrightarrow \mathcal F^c(E) \twoheadrightarrow \mathcal F^c(E)^{(2)}/R
$$
is 0. 
The weight grading of $\mathcal F^c(E)$ restricts to the sub-cooperad $\mathcal{F}^c(E, R)$, 
and the degree $n$ component of this weight grading on $\mathcal F^c(E,R)$ 
will be denoted $\mathcal F^c(E,R)^{(n)}$.
In particular, and this will be important later, 
the weight 2 component of $\mathcal F^c(E, R)$ is precisely the submodule of co-relations $R$,
$$\mathcal F^c(E,R)^{(2)} = R.$$
We call $\mathcal F^c(E,R)$ the cofree conilpotent cooperad cogenerated by $E$ with corelations $R$.
A cooperad $\mathcal C$ is \emph{quadratic} if it is given by a presentation
$\mathcal F^c(E,R)$ as above, that is, 
if is is given as the subcooperad of $\mathcal F^c(E)$ just described.
Let $\mathcal P = \mathcal F(E,R)$ be a quadratic operad.
Its \emph{Koszul dual cooperad } is defined as 
$$\mathcal P^{\antishriek} = \mathcal F^c\left(sE, s^2R\right).$$ 
The \emph{canonical twisting morphism} is the degree $-1$ morphism of $\mathbb S$-modules
$\kappa:\mathcal P^{\antishriek} \to \mathcal P$
given by the composite
$$
\kappa: \mathcal F^c\left(sE, s^2R\right) \twoheadrightarrow sE \xrightarrow{s^{-1}} E \to \mathcal F\left(E, R\right).
$$
If $\mathcal P$ is augmented,
then we can functorially associate to it a quasi-free differential graded conilpotent cooperad $B\mathcal P$,  
called the \emph{bar construction} of $\mathcal P$. 
If $\mathcal P$ is quadratic, then it is naturally augmented, 
and the Koszul dual cooperad $\mathcal P^\antishriek$ is a subcooperad of $B\mathcal P$ with trivial differential.
The operad $\mathcal P$ is \emph{Koszul} if the inclusion $\mathcal P^\antishriek \hookrightarrow B\mathcal P$ is a quasi-isomorphism.
The cooperad $B\mathcal P$, being differential graded, has a homology cooperad $H_*\left(B\mathcal P\right)$. 
This homology admits an extra cohomological degree called the syzygy degree.
It can be seen that $\mathcal P$ is Koszul if, and only if, $H^0\left(B\mathcal P\right) \cong \mathcal P^\antishriek$.
The assignment of a Koszul dual cooperad is functorial on weighted operads as long as the morphisms of operads preserve the weight.

\medskip
\noindent{\bf{$\mathcal P_\infty$-structures and Quillen homology. }}
In this section, we discuss several ways to present a $\mathcal P_\infty$-structure 
on a chain complex $A$ for a given  Koszul operad $\mathcal P$, and define the Quillen homology of a $\mathcal P$-algebra. 
A convenient choice of model for $\mathcal P_\infty$ is the cobar construction $\Omega \mathcal P^\antishriek$, 
where $\mathcal{P}^{\antishriek}$ is the Koszul dual cooperad of $\mathcal P$.
Recall that the cobar construction is the right adjoint of the bar construction $B$,
mapping onto the category of augmented differential graded operads.
A $\mathcal P_\infty$ structure on $A$ is therefore a morphism of 
differential graded operads $\Omega \mathcal P^\antishriek \to \operatorname{End}_A$, 
where $\operatorname{End}_A$ is the endomorphism operad of $A$.
Under this point of view, we can think of a $\mathcal P_\infty$-algebra structure on $A$ 
as a family of operations $\left\{A^{\otimes n}\to A\right\}$ parametrized by the operad $\Omega \mathcal P^\antishriek$.

By the \emph{Rosetta Stone Theorem} \cite[Theorem 10.1.13]{loday12}, 
an equivalent approach, and the one which we shall use in the rest of this document, is to define 
a \emph{$\mathcal{P}_\infty$-algebra} to be a chain complex 
$A$ along with a degree $-1$ square zero coderivation
$$
\delta: \mathcal{P}^{\antishriek}\left(A\right)\to \mathcal{P}^{\antishriek}\left(A\right).
$$
Briefly recall that if $A$ is a $\mathcal{P}$-algebra, 
then $\mathcal{P}^{\antishriek}\left(A\right)$ is a quasi-free $\mathcal{P}^{\antishriek}$-coalgebra
whose coderivation codifies the internal differential of $A$ as well as its $\mathcal{P}$-algebra structure.
The coderivation is meant as a $\mathcal{P}^{\antishriek}$-coalgebra,
and we explain next how to understand this.
Since it squares to zero, we might call it the codifferential of $\mathcal{P}^{\antishriek}\left(A\right)$.
It will often be convenient to present $\delta$ in two different ways. 
Firstly, as a collection of linear maps 
$\delta_r: \mathcal{P}^{\antishriek}(r)\otimes A^{\otimes r } \to A$,
for $r\geq 1,$
where each $\delta_r$ is the composition
\begin{equation*}
\mathcal{P}^{\antishriek}(r)\otimes A^{\otimes r } 
\hookrightarrow \bigoplus_{k\geq 1}\mathcal{P}^{\antishriek}(k)\otimes A^{\otimes k } 
= \mathcal{P}^{\antishriek}\left(A\right) \xrightarrow{\delta} \mathcal{P}^{\antishriek}\left(A\right) 
\xrightarrow{\epsilon_A} A.    
\end{equation*}
Here, $\epsilon$ is the counit of the $\mathcal{P}^{\antishriek}$ comonad. 
The coderivation $\delta$ can be reconstructed from the family $\left\{\delta_r\right\}_{r\geq 1}$ as the map
$$
\mathcal{P}^{\antishriek}\left(A\right) \xrightarrow{\triangle_{(1)}}
\left(\mathcal{P}^{\antishriek}\circ_{(1)}\mathcal{P}^{\antishriek}\right)\left(A\right) = \mathcal{P}^{\antishriek}\circ\left(A;  \mathcal{P}^{\antishriek}\left(A\right)\right)\xrightarrow{\idd\circ\left(\idd; m \right)} \mathcal{P}^{\antishriek}\circ \left(A;A\right)\to  \mathcal{P}^{\antishriek}\left(A\right). 
$$
Here, $\triangle_{(1)}$ is the infinitesimal decomposition coproduct of $\mathcal{P}^{\antishriek}$, see \cite[\textsection 6.1.4]{loday12},
and $m$ is the map $(\delta_r)_{r\geq 1}:\bigoplus_{r\geq 1} \mathcal{P}^{\antishriek}(r)\otimes A^{\otimes r } \to A$ 
induced by the universal property of the coproduct of the underlying graded vector spaces. 
Secondly, we can present $\delta$ as a collection of degree $n-2$ linear maps 
$\delta^{(n)}: \mathcal{P}^{\antishriek}(A)^{(n)} \to A$,
for $n\geq 1,$
where each $\delta^{(n)}$ is the composition
\begin{equation*}
\mathcal{P}^{\antishriek}(A)^{(n)}
\hookrightarrow \mathcal{P}^{\antishriek}\left(A\right) \xrightarrow{\delta} \mathcal{P}^{\antishriek}\left(A\right) 
\xrightarrow{\epsilon_A} A,    
\end{equation*}
and where $\mathcal{P}^{\antishriek}(A)^{(n)}$ consists of the weight $n$ part of $\mathcal{P}^{\antishriek}(A)$, 
\[
\mathcal{P}^{\antishriek}(A)^{(n)} = \bigoplus_{r\geq n} \left( \mathcal P^{(n)}(r)\otimes_{S_r}A^{\otimes r} \right).
\]
To reconstruct  $\delta$ from the family  $\left\{\delta^{(n)}\right\}_{n\geq 1}$, one proceeds \emph{mutatis mutandis} as in the case of $\left\{\delta_r\right\}_{r\geq 1}$.

\medskip
The object $(\mathcal P^{\antishriek}(A), \delta)$ is called the \emph{operadic chain complex}.
The \emph{Quillen homology} of a $\mathcal{P}$-algebra $A$ is the homology $H_*\left(\mathcal P^\antishriek(A),\delta\right)$ 
of this operadic chain complex. 
It forms a (non-differential) graded $\mathcal P^\antishriek$-coalgebra.

\medskip

A $\mathcal{P}_\infty$-algebra $A$ is a \textit{strict} $\mathcal P$-algebra 
if the map $m$ factors through the canonical twisting morphism $\kappa : \mathcal{P}^\antishriek \to \mathcal P$. 
Conversely, any $\mathcal P$-algebra $A$ can be seen as a $\mathcal P_\infty$-algebra by
pulling back its algebra structure along the morphism of operads $\Omega \mathcal{P}^{\antishriek} \to \mathcal P$.

\medskip

A \emph{$\mathcal{P}_\infty$-morphism} is a map of (dg) $\mathcal{P}^{\antishriek}$-coalgebras 
$F: \left(\mathcal{P}^{\antishriek}\left(A\right), \delta\right)\to \left(\mathcal{P}^{\antishriek}(B), \delta' \right)$.
As in the case of a codifferential on a $\mathcal{P}^{\antishriek}$-coalgebra, 
it will often be convenient to present $F$ as a collection of  linear maps 
$F_n: \mathcal{P}^{\antishriek}(n)\otimes A^{\otimes n } \to B$, for $n\geq 1$, 
where each $F_n$ is the composition
\[
\mathcal{P}^{\antishriek}(n)\otimes A^{\otimes n } 
\hookrightarrow \bigoplus_{k\geq 1}\mathcal{P}^{\antishriek}(k)\otimes A^{\otimes k } 
= \mathcal{P}^{\antishriek}\left(A\right) \xrightarrow{F} \mathcal{P}^{\antishriek}(B) \xrightarrow{\epsilon_B} B.
\]
The map $F$ can be reconstructed from the family $\left\{F_n\right\}_{n\geq 1}$ as the map
$$
\mathcal{P}^{\antishriek}\left(A\right) \xrightarrow{\triangle} \mathcal{P}^{\antishriek}\circ \mathcal{P}^{\antishriek}\left(A\right) \xrightarrow{\mathcal{P}^{\antishriek}\left(f\right)}  \mathcal{P}^{\antishriek}(B),
$$
where $f$ is the map $ (F_i)_{i\geq 1} : \bigoplus_{i\geq 1}\mathcal{P}^{\antishriek}(n)\otimes A^{\otimes n } \to B$ induced by the universal property of the coproduct. Similarly, we can decompose by weight instead of arity to produce a collection of degree $n-1$ linear maps 
$F^{(n)}: \mathcal{P}^{\antishriek}(A)^{(n)}\to B$.

\medskip
\noindent{\bf{The $\mathcal P$-Eilenberg--Moore spectral sequence. }}
\label{sec: construction of the EMSS}
Let $A$ be an algebra over a Koszul operad $\mathcal P$ and $H = H_\ast(A)$ be its homology.
There is a spectral sequence, 
which we call the \emph{$\mathcal P$-Eilenberg--Moore spectral sequence}, 
that computes the Quillen homology of $A$ 
as long as $A$ is positively graded of finite type 
(which is implicitly assumed whenever we speak of convergence). 
It is constructed as follows.
The operadic chain complex $\mathcal P^{\antishriek}(A)$ admits the ascending filtration
$$
F_p\mathcal P^{\antishriek}(A) = \bigoplus^{p}_{n=1}\mathcal P^{\antishriek}(A)^{(n)}.
$$
This filtration is bounded below and exhaustive.
Therefore, the associated spectral sequence converges to the operadic homology of $A$ as a graded module. 
The complex $\mathcal P^{\antishriek}(A)$ also has the 
structure of a conilpotent cofree $\mathcal P^{\antishriek}$-coalgebra with comultiplication $\Delta$,
which respects the filtration in the sense that
$$
\Delta\left(F_p\mathcal P^{\antishriek}(A) \right) \subseteq \bigoplus_{k=1}^p \bigoplus_{i_1+\cdots+i_k =p}\mathcal P^{\antishriek}(k)\otimes \left(F_{i_1}\mathcal P^{\antishriek}(A)\otimes\cdots \otimes F_{i_k}\mathcal P^{\antishriek}(A)\right).
$$
This further implies that each page of the spectral sequence inherits a $\mathcal P^{\antishriek}$-coalgebra structure, 
and furthermore, the spectral sequence converges as a $\mathcal P^{\antishriek}$-coalgebra.
A morphism of $\mathcal P^\antishriek$-coalgebras naturally induces a morphism of the corresponding spectral sequences.
The $E^0$-page of this spectral sequence is explicitly given by
\[
E^0_{p,q} = \left(\mathcal P^\antishriek(A)^{(p)}\right)_{p+q} 
\cong \left(\bigoplus_{r\geq 1} \left(\mathcal{P}^\antishriek\right) ^{(p)}(r)\otimes_{S_r} A^{\otimes r}\right)_{p+q}
\]
where the $p+q$ grading is induced from the internal grading of $A.$ 
Under the isomorphism above, the differential $d^0$ is determined by the differential $d$ of $A$,
%
and there is an isomorphism of differential bigraded modules 
$$\left(E^0,d^0\right) \cong \left(\mathcal P^\antishriek(A),\delta^{(1)}\right),$$
where abusing the notation, $\delta^{(1)}$ stands for the coderivation 
of $\mathcal P^\antishriek(A)$ induced by the weight $1$ component of the codifferential $\delta$.
Taking homology of $\left(E^0,d^0\right)$, 
it follows that the $E^1$-page of the spectral sequence is
$$
E^1_{p,q} = \left(\mathcal P^\antishriek\left(H_*\left(A\right)\right)^{(p)}\right)_{p+q}
$$
and the differential on this page is therefore entirely determined by the weight 2 component of the codifferential. 
In other words,
we have that $d^1= H_*\left(\delta^{(2)}\right).$ 
Taking homology again, we finally have
\[
E^2_{p,q} = H_{p+q}\left(\mathcal P^\antishriek(H)^{(p)}\right) \;	
		\	\xRightarrow{\phantom{m}p\phantom{m}} \; \ H_*\left(\mathcal P^\antishriek(A),\delta\right).
\]
While this definition seems to be original to this paper for general operads, 
it has some very well-known special cases. 
When $\mathcal{P}$ is binary, that is, generated by operations of arity $2$,
the weight grading coincides with the arity grading up to a shift. 
So, for example, when $\mathcal P = \mathsf{Ass}$ is the associative operad, 
the $\mathcal P$-Eilenberg--Moore spectral sequence is exactly the classical Eilenberg--Moore spectral sequence \cite{Eil66}.  
When $\mathcal P=\mathsf{Lie}$ is the Lie operad, the $\mathcal P$-Eilenberg--Moore spectral sequence
is exactly a classical Quillen spectral sequence that appears in \cite[(6.9) p. 262]{Qui69}.


\medskip

\begin{remarks} \
\begin{enumerate}
    \item If $A$ is an algebra over a Koszul operad $\mathcal P$, there are several spectral sequences closely related to the 
 one defined above.
 First, we can filter $\mathcal P^\antishriek$ by weight. 
 This gives the spectral sequence we studied above.
 Second, we can filter $\mathcal P^\antishriek$ by arity.
 This produces a spectral sequence that coincides 
 with the previous one up to a shift  when the operad is binary generated,
 or more generally, when the generators of the operad are concentrated in a single arity.
 However, in general, these two spectral sequences differ.
 Third, one can replace $\mathcal P^\antishriek$ with the bar construction $B\mathcal P$
 and filter similarly.
 Since not every operad is Koszul,
 this spectral sequence will be useful in those situations.
 \item  If $A$ is a $\mathcal{P}_\infty$-algebra, 
 then the construction of the spectral sequence above goes through with straightforward adjustments.
\end{enumerate}
\end{remarks}

\medskip
\noindent{\bf{A version of the homotopy transfer theorem. }}
In \cite[Theorem 2]{petersen2020}, 
D. Petersen gave what probably is the most general form of T. Kadeishvili's version of the 
classical homotopy transfer 
theorem \cite{Kad80} for algebras over binary algebraic operads.
Adapted to our needs, it reads as follows. 
In the statement, $\mathcal P$ is a reduced Koszul operad.

\begin{theorem}
\label{teo : Homotopy Transfer Theorem}
Let $(A, d)$ be a $\mathcal P$-algebra, $H$  its homology, 
and  $f : H \to A$ a cycle-choosing (and therefore necessarily degree $0$) linear map. 
Let $\delta_A$ be the degree $-1$ square-zero coderivation of $\mathcal{P}^{\antishriek}(A)$ representing the $\mathcal P$-algebra structure on $A$ whose arity 1 term equals the given differential $d$. 
Then there exists noncanonically a square-zero degree $-1$ coderivation $\delta$ of $\mathcal{P}^{\antishriek}(H)$ 
whose arity 1 term vanishes, and a morphism of $\mathcal{P}^{\antishriek}$-coalgebras 
$F : \mathcal{P}^{\antishriek}(H ) \to \mathcal{P}^{\antishriek}(A)$ whose linear term $F_1$ is $f$ 
and which is a chain map with respect to the differentials defined by $\delta_A$ and $\delta$.
\end{theorem}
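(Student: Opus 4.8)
The plan is to reduce to the situation of a genuine deformation retraction and then run the obstruction-theoretic form of the homotopy transfer theorem, which is in essence Petersen's argument \cite{petersen2020} (and, for a general, not necessarily binary, Koszul operad, the homotopy transfer theorem of \cite[\textsection 10.3]{loday12} applied to the strict $\mathcal P$-structure $\delta_A$). Throughout I would keep track of the two features specific to the statement: that the arity $1$ part of the transferred codifferential $\delta$ vanishes, and that the linear part of $F$ is \emph{exactly} the given $f$ and not merely something homotopic to it.

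First I would upgrade the cycle-choosing map $f$ to a contraction of $A$ onto $H$. Over a field, $A$ splits as $Z(A)\oplus C$ with $d$ restricting to an isomorphism $C\xrightarrow{\ \sim\ }dC\subseteq Z(A)$, and $Z(A)$ splits further as $f(H)\oplus dC$: indeed $f$ is injective (if $f(x)=0$ then $x=0$ in homology), $f(H)\cap dC=0$ because $f$ is cycle-choosing, and a rank count in each degree forces $f(H)\oplus dC=Z(A)$. Choosing such splittings, define a degree $0$ map $p\colon A\to H$ that inverts $f$ on $f(H)$ and vanishes on $dC\oplus C$, and a degree $1$ map $h\colon A\to A$ that inverts $d$ on $dC$ and vanishes on $f(H)\oplus C$. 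These satisfy $pf=\idd_H$, $pd=0$ and $dh+hd=\idd_A-fp$; the side relations $ph=hf=h^2=0$ can also be arranged but I do not expect to need them. This step uses that $\mathcal P$ is reduced, so that the ``arity $1$ term'' of a coderivation of $\mathcal P^{\antishriek}(A)$, respectively of a morphism of $\mathcal P^{\antishriek}$-coalgebras, is literally a linear endomorphism of $A$, respectively a linear map $H\to A$.

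Next I would build $\delta$ and $F$ by induction on arity, from families $\{\delta_n\colon\mathcal P^{\antishriek}(n)\otimes H^{\otimes n}\to H\}_{n\ge 1}$ and $\{F_n\colon\mathcal P^{\antishriek}(n)\otimes H^{\otimes n}\to A\}_{n\ge 1}$, reassembling them through the universal property of the cofree conilpotent $\mathcal P^{\antishriek}$-coalgebra; this makes $\delta$ a coderivation and $F$ a $\mathcal P^{\antishriek}$-coalgebra morphism automatically, so what remains to impose is $\delta_1=0$, $F_1=f$, the chain-map identity $\delta_A\circ F=F\circ\delta$, and $\delta^2=0$. One sets $F_1:=f$, $\delta_1:=0$, and notes the arity $1$ component of $\delta_A\circ F=F\circ\delta$ reads $d\circ f=0$, true since $f$ is cycle-choosing. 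In the inductive step, after moving the two top terms across, the arity $n$ component of $\delta_A\circ F=F\circ\delta$ landing in $A$ becomes $d\circ F_n-f\circ\delta_n=\Phi_n$, where $\Phi_n\colon\mathcal P^{\antishriek}(n)\otimes H^{\otimes n}\to A$ is an $\mathbb S_n$-equivariant degree $-1$ map determined by $\delta_A$ and the previously constructed $\delta_{<n},F_{<n}$. The inductive hypothesis together with $\delta_A^2=0$ gives $d\circ\Phi_n=0$, so one may take $\delta_n:=p\circ\Phi_n$ and $F_n:=h\circ(\Phi_n-f\circ\delta_n)$; then $p\circ\Phi_n=\delta_n$ and $d(\Phi_n-f\delta_n)=0$, whence $d\circ F_n=(\idd_A-fp-hd)(\Phi_n-f\delta_n)=\Phi_n-f\delta_n$, and the degrees match. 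Finally $\delta^2=0$ follows formally: applying $\delta_A$ to $\delta_A\circ F=F\circ\delta$ and using $\delta_A^2=0$ yields $F\circ\delta^2=0$, and $F$ is injective because it respects the arity filtrations with associated graded $\bigoplus_{r\ge1}\idd_{\mathcal P^{\antishriek}(r)}\otimes f^{\otimes r}$, injective since $f$ is. The non-canonicity in the statement is exactly the choice of splittings (equivalently of $p$ and $h$), plus the freedom at each stage to modify $F_n$ by a cycle-valued map while correcting $\delta_n$ accordingly.

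The step I expect to be the main obstacle is the closedness $d\circ\Phi_n=0$ of the obstruction: this is the operadic incarnation of the cocycle identity behind Kadeishvili's theorem \cite{Kad80}, and in the non-binary case it is precisely where one has to use the combinatorics of the decomposition coproduct of $\mathcal P^{\antishriek}$ (equivalently, the relations defining $\mathcal P$). Everything else is the bookkeeping sketched above. If one prefers to avoid re-deriving this point, it can be imported directly from \cite[\textsection 10.3]{loday12} or \cite{petersen2020}, after which only the two checks particular to our statement — $\delta_1=0$ and $F_1=f$ — need to be extracted, and both are immediate from the explicit formulas above (the arity $1$ part of $\delta$ is $p\circ d\circ f=0$).
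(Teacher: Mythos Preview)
Your proposal is correct and follows essentially the same obstruction-theoretic induction as the paper's sketch, which itself is Petersen's argument. The organizational differences are minor: you fix an explicit contraction $(f,p,h)$ at the outset and build $\delta_n,F_n$ from scratch via $\delta_n=p\circ\Phi_n$ and $F_n=h\circ(\Phi_n-f\delta_n)$, whereas the paper starts with the strict $\mathcal P$-structure on $H$ as a first guess $\delta^1$ and then corrects the arity $n$ term by subtracting the class $e$ of the obstruction in $H$ and choosing a nullhomotopy $F_n^n$; unwinding the paper's ``there exist $e,e'$ with $(\ldots)_n=f\circ e+de'$'' is exactly your splitting $Z(A)=f(H)\oplus dC$, and their $e$ is your $p\circ\Phi_n$. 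Your explicit verification of $\delta^2=0$ via injectivity of $F$ is a nice addition the paper leaves implicit. The one step you flag as the main obstacle, $d\circ\Phi_n=0$, is also the step the paper sweeps under ``one can show''; both accounts defer it to \cite{petersen2020} or \cite[\S10.3]{loday12}.
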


\begin{proof}[Sketch of the proof]
The homology $H$ is equipped with the structure of a $\mathcal P$-algebra descending from 
the $\mathcal P$-algebra structure on $A.$ 
This induces a degree $-1$ coderivation 
$\delta^1 : \mathcal{P}^{\antishriek}\left(H \right)\to \mathcal{P}^{\antishriek}\left(H\right)$ 
whose arity $1$ component $\delta^1_1$ is identically 0.
Now, by induction, assume that for some $n \geq 2$, 
we have a degree $-1$ coderivation 
$\delta^{n-1} : \mathcal{P}^{\antishriek}\left(H \right)\to \mathcal{P}^{\antishriek}\left(H\right)$ and 
a $\mathcal{P}^{\antishriek}$-coalgebra 
morphism $F^{n-1} : \mathcal{P}^{\antishriek}\left(H\right) \to \mathcal{P}^{\antishriek}(A )$ with $F_1 = f$,
such that the restrictions
of $\delta^{n-1}$ and $F^{n-1}$ to $F_{n-1}\mathcal{P}^{\antishriek}\left(H \right)$  satisfy 
$$
\begin{cases}
\delta^{n-1}\circ \delta^{n-1} =0 \\
F^{n-1}\circ \delta^{n-1} - \delta_A \circ F^{n-1} = 0.
\end{cases}
$$
Above, $\circ$ denotes the usual composition of maps, not the operadic circle product.
Write $F^1$ for the coalgebra map determined by $f$ in arity 1 and vanishing in higher arities.
Then $\delta^1$ and $F^1$ satisfy the identities above, 
providing the base case in the induction. 
The idea now is to modify only the arity $n$ terms of $\delta^{n-1}$ and $F^{n-1}$
to produce new $\delta^n$ and $F^{n}$ such that the equations above are satisfied on 
$F_n\mathcal{P}^{\antishriek}(A)$. 
One can show that there are $e$ and $e'$ such that 
$$
\left(F^{n-1}\circ \delta^{n-1} -\delta_A \circ F^{n-1}\right)_{n} = f\circ e + de'
$$
where $e\in \operatorname{Hom}(\mathcal P^{\antishriek}(n)\otimes H^{\otimes n}, H)$ and $e' \in \operatorname{Hom}(\mathcal P^{\antishriek}(n)\otimes H^{\otimes n}, A) $.
Therefore, we can define 
$$\delta^n_i = \begin{cases}
\delta^{n-1}_i &\mbox{for } i\neq n. \\
\delta^{n-1}_n - e &\mbox{for } i = n.
\end{cases}$$ 
In fact, $e$ may be computed as the projection of 
$\left(F^{n-1}\circ \delta^{n-1} -\delta_A \circ F^{n-1}\right)_{n}$ onto $H$.  
Similarly, we can define $F_n^n$ to be
$$F^n_i = \begin{cases}
F^{n-1}_i &\mbox{ if } i\neq n. \\
F^n_n &\mbox{ for any }F^n_n \mbox{ such that }dF^n_n =  F_n^{n-1}-e'\mbox{ when }  i = n.
\end{cases}$$ 
So defined, the coderivation $\delta^n$ and the coalgebra map $F^n$ satisfy the required conditions, and the proof is complete.
\end{proof}

\subsubsection{Higher-order Massey products for associative algebras}
\label{sec: Higher Massey products for associative algebras}

The triple Massey product for differential graded associative algebras was introduced in the fifties, 
see \cite{Uehara57}
and \cite{Mas69} (reprinted as \cite{Mas98}).
Massey himself soon realized that the triple product could be extended to $n$-fold Massey products \cite{Mas58},
see also \cite{May69}.
Our generalization of the higher-order Massey products 
to algebras over algebraic operads  
has its roots in this definition.
Therefore, we find it convenient to devote this section to recall
the higher-order Massey products for differential graded associative algebras.
Excellent references for this topic include \cite{KrainesMasseyproducts,May69,Ravenel}.

\medskip

Let $(A,d)$ be a differential graded associative algebra, 
and $x_1,x_2\in H_*(A)$ homogeneous elements. 
The Massey product $\langle x_1,x_2\rangle$ is defined as the singleton $\{x_1x_2\}$ formed by the product 
of the two classes in $H_*(A)$.
It is also possible to identify the set $\{x_1x_2\}$ with the product $x_1x_2$ itself
and define the Massey product of two homogeneous elements in homology as their ordinary product.
Let us define next the triple and higher-order Massey products.
First, we introduce the auxiliary notion of a defining system.
A defining system 
in the case of the Massey product of two homology classes $\langle x_1,x_2\rangle$ is just a choice $\{b_1,b_2\}$
of cycle representatives of $x_1$ and $x_2$.

\begin{definition}
\label{Def : associative higher Massey products} 
Let $(A,d)$ be a differential graded associative algebra, 
and  $x_1,{\dots},x_n$ be $n\geq 3$ 
homogeneous elements in $H_*(A)$.
A \emph{defining system for the $n^{th}$-order Massey product of the classes $x_1,...,x_n$} is a set of homogeneous elements
\[
\left\{b_{ij} \right\} \subseteq A, \quad \textrm{ for } \quad 0 \leq i < j \leq n \quad \textrm{ and }\quad  1 \leq j-i \leq n-1,
\]
defined as follows.
\begin{itemize}
	\item (Initial step) For $i=1,{\dots},n$ the element $b_{i-1,i}$ is a cycle representative of $x_i$. 
	\item (Inductive relation) 
	For each $0 \leq i < j \leq n$  and $1 \leq j-i \leq n-1$, 
the element $b_{ij}\in A$ satisfies
 \begin{equation}
     \label{ecu: b_ij in defining system}
     d\left(b_{ij}\right)=\sum_{0\leq i<k<j\leq n} (-1)^{|b_{ik}|+1} b_{ ik }b_{kj}.
 \end{equation}
\end{itemize}
The \emph{$n^{th}$-order Massey product of the classes $x_1,...,x_n$} is  the set 
\[
\left\langle x_1,{\dots},x_n \right\rangle
=\left\{\left[\sum_{0\leq i<k<j\leq n} (-1)^{|b_{ik}|+1} b_{ ik}b_{kn} \right] \ \mid\  \left\{b_{ij}\right\} \textrm{ is a defining system}\right\}\subseteq H_{s+2+n}(A),
\]
 where $s=\sum_{i=1}^n |x_i|$, and the bracket $[-]$ denotes taking homology class.
\end{definition}

The elements $b_{ij}$ of Equation (\ref{ecu: b_ij in defining system}) might not exist at all,
in which case the Massey product set is empty.
The necessary and sufficient condition for $\left\langle x_1,{\dots},x_n \right\rangle$ to be non-empty
is that for all $1\leq i < j \leq n$ and $1\leq j-i\leq n-2$, 
the Massey product sets $\langle x_i,...,x_j \rangle$ are non-empty and furthermore contain the zero class.

The fact that for a fixed defining system the sum 
$$\sum_{0\leq i<k<j\leq n} (-1)^{|b_{ik}|+1} b_{ ik}b_{kn}$$
defines a cycle is a straightforward check by applying $d$ and using the inductive relations.
If there are no defining systems for the classes $x_1,...,x_n$,
their Massey product $\left\langle x_1,{\dots},x_n \right\rangle$ is defined as the empty set,
or it is said to be undefined.

\medskip

A similar definition for higher Lie--Massey brackets on the homology of a differential graded Lie algebra exists, 
see \cite{Allday73,Allday77,Ret77,jose19,Tan83}.
The main purpose of this paper
is to provide a suitable generalization of Definition \ref{Def : associative higher Massey products}
to algebras over Koszul operads, see Section \ref{sec : higher-order operadic Massey products}.

\subsubsection{Secondary Massey products for algebras over algebraic operads}
\label{sec: secondary Massey products for algebras over operads}

In this section, 
we briefly outline Muro's definition of secondary Massey products for algebras over algebraic operads.
Our eventual definition of Massey products for algebras over operads,
Def. \ref{def : Higher-order Massey products for algebras over algebraic operads},
is shown to extend the one below in Proposition \ref{prop : Our products coincide with Muro's}.

\begin{definition} 
{({\cite[Def. 2.1]{Mur21}})}
\label{Def: Massey product of Muro}
Let $\mathcal P = \mathcal F(E,R)$ be a Koszul operad generated by the reduced symmetric sequence $E$
with quadratic relations $R \subseteq \mathcal F(E)^{(2)}$.
Fix 
    \begin{equation*}
        \Gamma=\sum\left(\mu^{\left(1\right)}\circ_k\mu^{(2)}\right)\cdot\sigma
    \end{equation*}
a relation of arity $r$ of $R$.
Here, $\mu^{(i)}\in E(r_i)$, with $r_1+r_2=r+1$, the symbol $\circ_k$ denotes the $k$-th partial composition product,  
$1\leq k\leq r_1$, and $\sigma\in\mathbb{S}_r$.
Let $A$ be a $\mathcal P$-algebra and let $x_1,...,x_r\in H_{*}(A)$ be homogeneous elements
such that
    \begin{equation}
    \label{ecu : vanishing}
        \mu^{(2)}\left(x_{\sigma^{-1}(k)},\dots, x_{\sigma^{-1}(k+r_2-1)}\right)=0
    \end{equation}
in $H_*(A)$ for each term in the relation. 
For each $1\leq i\leq r$, 
fix $y_i\in A$ a cycle representative 
of $x_i$ and, 
for each summand in the relation, 
let $\rho^{(2)}\in A$
be an element such that
\begin{equation}
\label{ecu : Condition on boundary of Fernando's}
        d\rho^{(2)}=\mu^{(2)}\left(y_{\sigma^{-1}(k)},\dots, y_{\sigma^{-1}(k+r_2-1)}\right)  
\end{equation}
in $A$. 
Such an element exists by Equation (\ref{ecu : vanishing}).
The $\Gamma$-\emph{Massey product} set $\langle x_1,\dots,x_r\rangle_\Gamma$
is the set of homology classes 
represented by cycles of the form
\begin{equation*}
        \sum(-1)^{\gamma}\mu^{\left(1\right)}
        \left(y_{\sigma^{-1}(1)},\dots,y_{\sigma^{-1}(k-1)},\rho^{(2)},y_{\sigma^{-1}(k+r_2)},\dots,y_{\sigma^{-1}(r)}\right),
\end{equation*}
where
    \[
    \gamma  =\alpha+ |\mu^{\left(1\right)}| \ +\left(|\mu^{(2)}|-1\right)
    \sum_{m=1}^{k-1} |x_{\sigma^{-1}(m)}|,
    \qquad \alpha  =\sum_{ \substack{i<j \\ \sigma(i) > \sigma(j)}}|x_i||x_j|.
    \]
for all possible coherent choices of elements $\rho^{(2)}$.\end{definition}

Muro shows that the definition above recovers the usual triple Massey products for differential graded
associative algebras when $\Gamma$ is the associativity relation of the associative operad,
and the triple Lie--Massey brackets for differential graded Lie algebras when $\Gamma$ is the Jacobi relation of the Lie operad.

The perspective we take to construct higher-order Massey products for algebras over algebraic operads 
differs significantly from the construction of Muro just explained.
Muro uses the form of relations defined using partial composition. 
The definition does not depend exclusively on the relation $\Gamma$,
but also on a specific choice of expansion of $\Gamma$ in terms of the partial compositions.
This choice is not unique.
Our approach is also affected by a choice in the explicit form of the higher relations. 
To generalize, 
we prefer to see such relations as the weight 2 cooperations in the Koszul dual cooperad of $\mathcal P$,
and work with defining systems in a similar way as in 
Definition \ref{Def : associative higher Massey products}. 
This makes our formulas easier to write in the usual language of algebraic operads and Koszul duality theory.
To take into account the dependency of the higher relations on a presentation, 
we will assume all through that a $\Bbbk$-linear basis of the symmetric sequence $E$ has been fixed,
and then there is an induced basis on $\mathcal F^c\left(sE\right)$ given by symmetric 
tree monomials. 
This will be recalled in the corresponding section.
We show in Proposition \ref{prop : Our products coincide with Muro's} that 
the secondary case of our definition coincides with Muro's definition.

\section{Higher-order operadic Massey products}
\label{sec : higher-order operadic Massey products}

In this section,
we define higher-order Massey products for algebras over algebraic operads.
We focus on the case of Koszul operads 
and explain in Remark \ref{remark: Massey products for non-Koszul operads} how to deal with the non-Koszul case.
We recommend familiarity with the classical higher-order Massey products for differential graded 
associative algebras recalled in Section \ref{sec: Higher Massey products for associative algebras}.

\medskip

Let $\mathcal P = \mathcal F(E,R)$ be a Koszul operad with Koszul dual cooperad $\mathcal P^\antishriek = \mathcal F^c(sE,s^2R)$.
We will assume all through the paper that a $\Bbbk$-linear basis of $E$ has been fixed.
Then, there are induced bases on $\mathcal F(E)$ and on $\mathcal F^c\left(sE\right)$ 
given by appropriate symmetric tree monomials, see \cite[Section 2.4]{Dot}. 
These bases will also be fixed once and for all. 
Since $\mathcal P^\antishriek \subseteq \mathcal F^c\left(sE\right)$,
we will use this basis to linearly expand the elements of $\mathcal P^\antishriek$ in our results.

As mentioned in the introduction, 
each weight-homogeneous cooperation $\Gamma^c$ of $\mathcal P^\antishriek$ creates a partially defined
higher-order operation $\langle -,...,-\rangle_{\Gamma^c}$ on the homology of any $\mathcal P$-algebra,
with as many inputs as the arity $r$ of $\Gamma^c$.   
Out of homogeneous elements $x_1,...,x_r \in H_*(A)$ on the homology of a $\mathcal P$-algebra $A$,
this operation creates a (possibly empty) set of homology classes
$$\langle x_1,...,x_r\rangle_{\Gamma^c} \subseteq H_*(A).$$
The non-emptiness depends on the vanishing,
in a precise sense,
of strictly lower-order operations of the same kind that depend on $\Gamma^c$.
The set $\langle x_1,...,x_r\rangle_{\Gamma^c}$ is called the \emph{$\Gamma^c$-Massey product} of the classes $x_1,...,x_r$.

To construct the $\Gamma^c$-Massey product operation $\langle -,...,-\rangle_{\Gamma^c}$, 
we proceed as follows.
First, 
the cooperation $\Gamma^c$ determines a set of indices $I\left(\Gamma^c\right)$ 
which is then used to form \emph{defining systems}.
A defining system for the concrete $\Gamma^c$-Massey product set $\langle x_1,...,x_r\rangle_{\Gamma^c}$ is a
coherent choice of elements $\left\{a_\alpha\right\}$ of $A$ indexed by $I\left(\Gamma^c\right)$ 
that are combined to create a cycle.
The homology classes contained in $\langle x_1,...,x_r\rangle_{\Gamma^c}$
are obtained by running over all possible choices of defining systems for $x_1,...,x_r$
and taking the homology class of the associated cycle.

\medskip

The section is organized as follows.
First,
we introduce the \emph{Massey inductive map}.
This map depends on the coproduct of $\mathcal P^\antishriek$ and 
a fixed twisting morphism $\kappa:\mathcal P^\antishriek \to \mathcal P$.
It is an essential ingredient when dealing with the inductive definitions that follow.
Then, we define the indexing set $I\left(\Gamma^c\right)$ associated to an arbitrary cooperation $\Gamma^c$ 
and compute some examples.
Once the concept of indexing sets is established,
we proceed to explain what a defining system is
and give examples of them.
Then, we define the higher-order $\Gamma^c$-Massey products,
and compute examples
including the associative, commutative, Lie, Poisson, and dual numbers operads.
Later on, 
we show that our higher-order Massey products framework includes Muro's \cite{Mur21} (Prop. \ref{prop : Our products coincide with Muro's}).
We study the elementary properties of these higher-order products in
Section \ref{sec: Elementary properties of the operadic Massey products}.
These include the behavior along morphisms of $\mathcal P$-algebras,
quasi-isomorphisms, and some connections to formality.
Some further  properties are explored in 
Section \ref{sec : Massey products along morphisms of operads and formality}.
There, we focus on the behavior of the higher-order Massey products 
along morphisms of operads and give some applications to formality.

\medskip

Recall that the decomposition map $\Delta : \mathcal C \to \mathcal C \circ \mathcal C$ of any
counital cooperad $\mathcal C$ can be uniquely written as 
$$\Delta(c) = \Delta^+(c) + (\id;c)$$
for every arity-homogeneous $c\in \mathcal C$.
Here,  $\id\in \mathcal C(1)$ is the element that corresponds to the identity element $1$ of the ground field $\Bbbk$
under the linear isomorphism $\mathcal C(1)\to \Bbbk$ induced by the counit.
We call $\Delta^+$ the half-reduced decomposition map of $\mathcal C$.

\begin{definition}
The \emph{Massey inductive map} is the degree $-1$ map
$$
D:\mathcal F^c\left(sE\right) \xrightarrow{\Delta^+}
\mathcal F^c\left(sE\right)\circ\mathcal F^c\left(sE\right)
\xrightarrow{\kappa\circ \operatorname{id}} E \circ \mathcal F^c\left(sE\right).
$$
Applied to some cooperation $\mu$, 
we shall write 
\begin{equation}
    \label{Ecu: linear expansion of D}
    D\left(\mu\right) = \sum \left(\zeta; \zeta_1,\dots,\zeta_m; \sigma \right),
\end{equation}
where $\zeta\in E(m),$ 
    $\zeta_i\in \mathcal F^c\left(sE\right)\left(v_i\right)$, $\sigma\in \mathbb{S}_m$ and $v_1+\cdots+ v_m$ 
    is equal to the arity of $\mu$.
\end{definition}

The sum in Equation \eqref{Ecu: linear expansion of D}
is indexed over all $\zeta$ along the chosen basis of $E$, 
and each term may have a $\Bbbk$-coefficient (possibly $0$).
The map $D$ is inductive in the sense that, 
for any cooperation $\mu$, 
the cooperations 
$\zeta_1, \dots, \zeta_m$ appearing on the terms of $D\left(\mu\right)$ 
will each always have weight 
strictly less than that of $\mu$. 
This will allow us to establish the inductive relations of our defining systems later on.
If $\mathcal P$ is a Koszul operad, then the fact that 
$\mathcal{P}^{\antishriek}$ is a subcooperad of $\mathcal F^c\left(sE\right)$
allows us to restrict 
the Massey inductive map to a map 
\[
D:\mathcal P^{\antishriek}\xrightarrow{\Delta^+}\mathcal P^{\antishriek}\circ \mathcal P^{\antishriek} \xrightarrow{\kappa\circ \operatorname{id}} E \circ \mathcal P^{\antishriek}.
\]
Abusing the notation, we call this restriction the Massey inductive map too, 
and use the same symbols to denote the maps that constitute it. 

As mentioned before, 
the cofree conilpotent cooperad $\mathcal F^c(sE)$ 
has a fixed combinatorial description in terms of rooted tree monomials whose internal vertices are labeled by elements of $sE$. 
Each such tree monomial has a \emph{first vertex}, 
which is the unique child of the root and corresponds to the first generating cooperation to be applied. 
The action of $D$ is determined by sending any tree monomial $T$ to $\left(s^{-1}x; T_1, \dots T_m\right)$, 
where $x\in \left(sE\right)(m)$ is the label of the first vertex of $T$,
and $T_1,\dots, T_m$ are the tree monomials attached to this first vertex of $T$. 
Intuitively, the Massey inductive map is trimming level 1 edges. 
See figures \ref{Fig1} and \ref{Fig2}.

Next, we introduce the set associated with a cooperation of $\mathcal P^\antishriek$ that will provide the indices for our defining systems. 
It is defined by induction on the weight of arity-homogeneous cooperations of $\mathcal P^{\antishriek}$, 
with the Massey inductive map providing the necessary inductive step.

\begin{definition}
\label{def : indexing sets}
Let $\Gamma^c\in \mathcal{P}^{\antishriek}(r)$ be a weight-homogeneous cooperation.
For each permutation $\left(k_1,...,k_r\right) \in \mathbb{S}_r$, 
we define the \textit{$\Gamma^c$-indexing set} 
$I\left(\Gamma^c,\left(k_1,...,k_r\right)\right)$ by induction on the weight $w\left(\Gamma^c\right)$ of $\Gamma^c$ as follows.
\begin{itemize}
    \item If $w\left(\Gamma^c\right) = 0$, then $I\left(\Gamma^c,\right) = \emptyset$. 
    \item If $w\left(\Gamma^c\right) = 1$, then $I\left(\Gamma^c,\right) = \left\{\left(\id,(1)\right),...,\left(\id,(r)\right)\right\}$. 
\end{itemize}
Assume next that $I\left(\Gamma^c,\left(k_1,...,k_r\right)\right)$ has been defined for cooperations up to weight $n$,
and suppose $\Gamma^c$ is of weight $n+1$. 
If 
$$
D\left(\Gamma^c\right) = \sum \left(\zeta; \zeta_1,\dots,\zeta_m; \sigma \right)
$$
as in Equation \eqref{Ecu: linear expansion of D},
and the leaves on top of each $\zeta_i$ are labeled $l_1,...,l_{v_i}$, then
\[
I\left(\Gamma^c,\left(k_1,...,k_r\right)\right) 
:= \bigcup_{i=1}^m I\left(\zeta_i,\left(k_{l_1},...,k_{l_{v_i}}\right)\right) 
\cup 
\left\{  \left(\zeta_i,\left(k_{l_1},...,k_{l_{v_i}}\right)\right)   \right\}. 
\]
\end{definition}

The super index $c$ in $\Gamma^c$ indicates that we are seeing the corresponding element in the Koszul dual \emph{cooperad} of $\mathcal P$.
At a later place, 
we will see this same element as a relation $\Gamma$ in the free operad $\mathcal F(E)$.
Since we will need to distinguish between these two elements,
we keep the super index in the notation.

\medskip

\begin{figure}[h!]
\centering
		\includegraphics[scale=0.3]{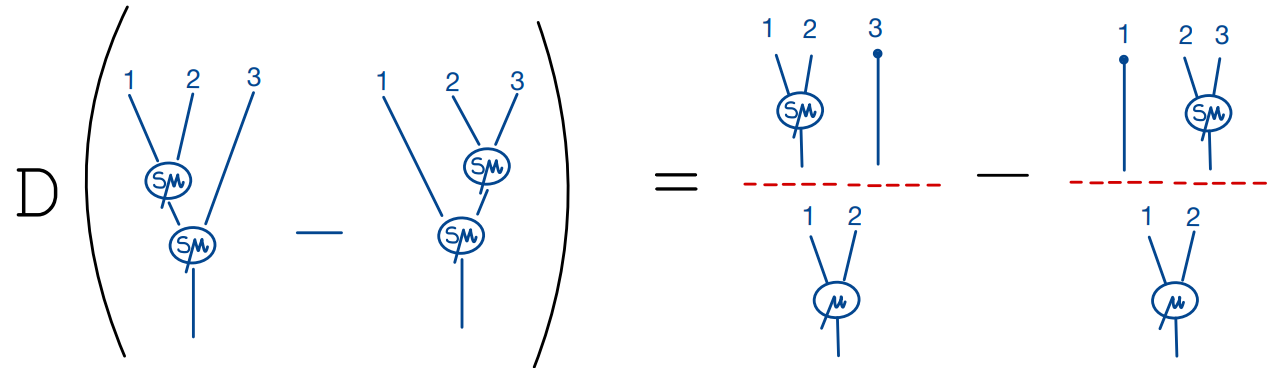}
  \caption{The Massey inductive map for $\mathsf{Ass}$}
  \label{Fig1}
\end{figure}

\begin{figure}[h!]
\centering
	\includegraphics[scale=0.44]{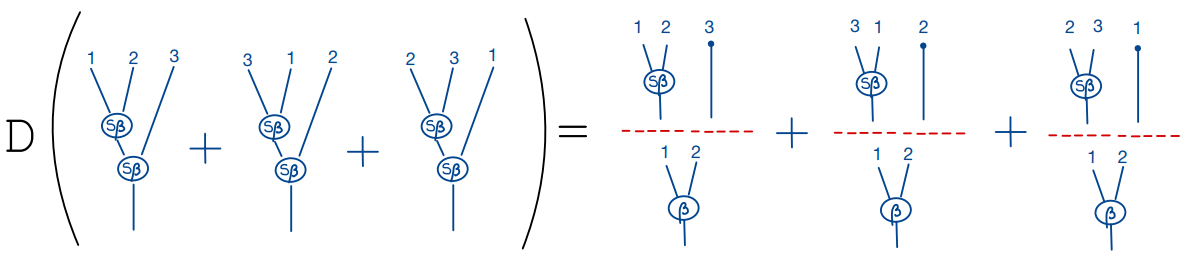}
\caption{The Massey inductive map for $\mathsf{Lie}$}
\label{Fig2}
\end{figure}

\medskip

The following elementary observation will be the base case of the inductive definition of defining systems below. 
We record this fact before giving some explicit examples.

\begin{remark} 
\label{remark : indexing for weight 1}
      If $\mathcal P$ is any Koszul operad and $\Gamma^c \in \left(\mathcal{P}^\antishriek\right)^{(1)}(r)=\left(sE\right)(r)$ is 
    any cogenerator of arity $r$, then the $\Gamma^c$-indexing set is always given by 
    $$I\left(\Gamma^c\right)  = \left\{ (\id,(1)),...,(\id,(r))\right\}.$$
\end{remark}

Let us illustrate the definition of indexing sets with some examples.

\begin{example}
\label{Ex:AssIndexingSet}
Let $\mathcal P = \mathsf{Ass}$.
Then the weight $n$ component of $\mathcal P^{\antishriek}$ 
is freely generated as an $\mathbb{S}_{n+1}$-module by a single generator $\mu^c_{n+1}\in \mathsf{Ass}^{\antishriek}(n+1)$. 
Recall that
$$
\Delta\left(\mu_n^c\right) = \sum_{i_1+\cdots+i_k = n} (-1)^{\sum(i_j+1)(k-j)}\left(\mu_k^c; \mu_{i_1}^c, \dots \mu_{i_k}^c; \idd\right).
$$
Here, we denote $\mu_1^c = \id  \in \mathsf{Ass}^{\antishriek}(1)$. 
Since $\kappa \left(\mu_2^c \right)= \mu_2$ 
and $\kappa\left(\mu_k^c\right)= 0$ for $k\geq 3,$ 
this implies that 
$$
D\left(\mu_n^c\right) = \sum_{i_1+i_2 = n} (-1)^{i_1+1}\left(\mu_2; \mu_{i_1}^c, \mu_{i_2}^c; \idd\right).
$$
This means that the defining system  $I\left({\mu^c_n}\right)$ contains the elements 
$$
(\mu^c_{i_1}, (1, 2,\dots, i_1))
\quad \textrm{and} \quad 
(\mu^c_{i_2}, (n-i_2, n-i_2 +1,\dots, n)),
$$
where $i_1+i_2=n$. 
By iterating this process, we see that 
$$
I\left({\mu^c_n}\right) = \left\{\left(\mu^c_{k}, (i, i+1, \dots, i+k-1)\right) \mid k < n \mbox{ and } i\in \{1, 2, \dots, n-k-1\} \right\}.
$$
\hfill$\square$
\end{example}

\begin{example}
\label{Ex:LieIndexingSet}
Let $\mathcal P = \mathsf{Lie}$.
Then the weight $n$ part of $\mathcal P^{\antishriek}$ is one-dimensional 
and generated by $\tau^c_{n+1}\in \mathsf{Lie}^{\antishriek}(n+1)$. Recall that
$$
\Delta\left(\tau_n^c\right) 
= \sum_{\substack{i_1+\cdots+i_k = n \\ \sigma \in \overline{Sh}^{-1}\left(i_1,\dots, i_k\right)}} (-1)^{\sum(i_j+1)(k-j)}\operatorname{sgn}(\sigma)\left(\tau_k^c; \tau_{i_1}^c, \dots \tau_{i_k}^c;\sigma\right),
$$
where $\overline{Sh}^{-1}\left(i_1,\dots, i_k\right)$ is the set of reduced unshuffles. 
Here, an unshuffle is the inverse of a shuffle,
and \emph{reduced} 
signifies that we are considering only those shuffles that fix the position of the first element,
i.e. $\sigma(1)=1$.
Since $\kappa(\tau_2^c)= \tau_2$ and $\kappa(\tau_k^c)= 0$ for $k\geq 3,$ this implies that 
$$
D\left(\tau_n^c\right) 
= \sum_{\substack{i+j = n\\ \sigma\in \overline{Sh}^{-1}\left(i,j\right)}} (-1)^{i+1}\operatorname{sgn}(\sigma)\left(\tau_2; \tau_{i}^c, \tau_{j}^c; \sigma\right).
$$
This means that the defining system $I\left({\tau^c_n}\right)$ contains the elements 
$$
\left(\tau^c_{i}, \left(\sigma(1), \sigma(2),\dots, \sigma(i)\right)\right)
\quad \textrm{and} \quad 
\left(\tau^c_{j}, \left(\sigma(n-j), \sigma(n-j +1),\dots, \sigma(n)\right)\right)
$$
for each reduced shuffle $\sigma\in \overline{Sh}\left(i,j\right)$ with $i+j=n$.
In this step, we changed from using unshuffles to shuffles, 
because there is an inversion involved. 
By iterating this process, we find that 
$$
I\left({\tau^c_n}\right) = \left\{\left(\tau^c_{k}, \left(i_1, \dots i_k\right)\right) \mid k<n \mbox{ and }1\leq i_1\leq\cdots\leq i_l\leq n \right\}.
$$
\hfill$\square$
\end{example}

As mentioned before, 
each cooperation $\Gamma^c$ of weight $n$ in the Koszul dual cooperad $\mathcal P^\antishriek$  of $\mathcal P$
produces a partially defined $n$-th order operation $\langle -,...,-\rangle_{\Gamma^c}$ 
on the homology $H_*(A)$ of a $\mathcal P$-algebra $A$.
This higher operation has $r$ inputs, where $r$ is the arity of $\Gamma^c$,
and the output is the set of homology classes created from all possible choices of \emph{defining systems},
generalizing the case of associative algebras of 
Section \ref{sec: Higher Massey products for associative algebras}.
Our next task is to explain what the defining systems are.
Each defining system will depend on a weight-homogeneous cooperation $\Gamma^c$ of arity $r$
and $r$ homogeneous homology classes $x_1,...,x_r \in H_*(A)$.
Their definition is given by induction on the weight of the cooperation.

\begin{definition}
\label{def: Defining systems for operadic Massey products}
Let $\Gamma^c\in \left(\mathcal P^{\antishriek} \right)^{(n)} (r)$ for some $n\geq 1$,
$A$ a $\mathcal P$-algebra, and $x_1,...,x_r\in H_*(A)$ homogeneous elements.
A \textit{$\Gamma^c$-defining system (associated to $x_1,...,x_r$)} is 
a collection $\left\{a_\alpha\right\}_{\alpha \in I\left(\Gamma^c\right)}$ of elements of $A$ indexed by $I\left(\Gamma^c\right)$
such that:
\begin{enumerate}
    \item Each
    $a_{(\idd, (i))}\in A$ is a cycle representative for $x_i\in H_*(A).$
    \item For each index $\left(\mu,\left(k_1, \cdots, k_i\right)\right) \in I\left(\Gamma^c\right)$ with $\mu \neq \idd$,
    the corresponding element $a_{\left(\mu,\left(k_1, \cdots, k_i\right)\right)}$ is such that 
\[
    d\left(a_{\mu,\left(k_1, \cdots, k_i\right)}\right) 
    = \sum 
    \zeta 
     \left(a_{\left(\zeta_1,\left(k_{\sigma^{-1}\left(1\right)},\dots ,k_{\sigma^{-1}\left(v_1\right)}\right)\right)},\dots,a_{\left(\zeta_m,\left(k_{\sigma^{-1}\left(v_1+\cdots+ v_{m-1}+1\right)},\dots, k_{\sigma^{-1}(i)}\right)\right)} \right),
\]
where 
$D\left(\mu\right) = \sum \left(\zeta ; \zeta_1,\dots,\zeta_m; \sigma\right)$.
\end{enumerate}
\end{definition}

Next, we use the defining systems explained above to define the $\Gamma^c$-Massey products.
If the cooperation $\Gamma^c$ is of weight $1$ and arity $r$,
that is, a cogenerator,  then $\Gamma = \kappa(\Gamma^c)$ is a generator of $\mathcal P.$
For any homogeneous elements $x_1,...,x_r\in H_*(A)$,
we define their \emph{$\Gamma^c$-Massey product} as the set 
$$
\langle x_1,...,x_r \rangle_{\Gamma^c} := \left\{ \Gamma(x_1,...,x_r \right)\}.
$$
 We may also identify this set with its unique element $\Gamma(x_1,...,x_r ) \in H_*(A)$.
Let us define the $\Gamma^c$-Massey products for elements of weight $\geq 2$.

\begin{definition}
\label{def : Higher-order Massey products for algebras over algebraic operads}
Let $A$ be a $\mathcal P$-algebra,
$\Gamma^c \in \left(\mathcal P^{\antishriek} \right)^{(n)} (r)$ with $n\geq 2$,
and $x_1,...,x_r$ homogeneous elements of $H_*(A)$.
Then:
\begin{enumerate}
    \item The \emph{$\Gamma^c$-Massey product} associated to a $\Gamma^c$-defining system 
$\left\{a_\alpha\right\}$ and $x_1,...,x_r$ 
is the homology class of the cycle
\begin{equation}
    \label{ecu : Massey cycle}
    a_{\Gamma^c, (1,\dots, r)}:=\sum (-1)^\gamma\zeta\left( a_{\zeta_1, \left(\sigma^{-1}\left(1\right),\sigma^{-1}\left(2\right),\dots, \sigma^{-1}\left(v_1\right)\right)},\dots,a_{\zeta_m,\left(\sigma^{-1}\left(v_1+\cdots+ v_{m-1}+1\right),\dots, \sigma^{-1}\left(r\right)\right)} \right),
\end{equation}
where 
$D\left(\Gamma^c\right) = \sum \left(\zeta; \zeta_1,\dots,\zeta_m; \sigma \right)$,
and the sign is given by
\[
\gamma = \alpha + \sum_{i=2}^m \left(|\zeta_i| - \operatorname{wgt}(\zeta_i)\right) \left(\sum_{k=1}^{v_1+\cdots + v_{i-1}} |x_{\sigma^{-1}(k)}|\right) +1, \qquad \alpha  =\sum_{ \substack{i<j \\ \sigma(i) > \sigma(j)}}|x_i||x_j|,
\]
where $\operatorname{wgt}(\zeta_i)$ is the weight of $\zeta_i$.
    \item The \emph{$\Gamma^c$-Massey product set} $\langle x_1, \dots, x_r \rangle_{\Gamma^c}$
is the (possibly empty) subset of $H_*(A)$ formed by the homology classes arising from all possible choices of 
$\Gamma^c$-defining systems $\left\{a_\alpha\right\}$ associated to $x_1,...,x_r$.
\end{enumerate}
\end{definition}

The next result shows that the proposed definition is correct.
As a consequence of it, 
we readily see from the definition of defining systems that the
$\Gamma^c$-Massey product set $\langle x_1,...,x_r \rangle_{\Gamma^c}$ is non-empty 
if, and only if, for all $(\mu; k_1,\dots k_i) \in I(\Gamma^c)$,  the Massey product set $\langle x_{k_1}, \dots, x_{k_i} \rangle_{\mu}$
 is defined and contains the zero class.

\begin{proposition}
\label{Prop: cycle}
Let $A$ be a $\mathcal P$-algebra,
$\Gamma^c \in \left(\mathcal P^{\antishriek} \right)^{(n)} (r)$ for some $n\geq 2$,
and $x_1,...,x_r$ homogeneous elements of $H_*(A)$.
Then the $\Gamma^c$-Massey product $x$ associated to any $\Gamma^c$-defining system for $x_1,...,x_r$ is a cycle.
\end{proposition}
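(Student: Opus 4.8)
The plan is to imitate the associative case as literally as possible: apply the internal differential $d$ of $A$ to the element $a_{\Gamma^c,(1,\dots,r)}$ of \eqref{ecu : Massey cycle}, expand via the Leibniz rule, substitute the defining-system relations of Definition~\ref{def: Defining systems for operadic Massey products}, and check that everything cancels. Write $D(\Gamma^c)=\sum(\zeta;\zeta_1,\dots,\zeta_m;\sigma)$. The Leibniz rule produces one term per slot; the slots occupied by entries $a_{(\idd,(i))}$ contribute nothing, since those are cycles by condition~(1) of Definition~\ref{def: Defining systems for operadic Massey products}. For a slot occupied by $a_{\zeta_j,(k_1,\dots,k_i)}$ with $\zeta_j\neq\idd$, condition~(2) replaces $d\bigl(a_{\zeta_j,(k_1,\dots,k_i)}\bigr)$ by $\sum\zeta'\bigl(a_{\zeta'_1,\dots},\dots,a_{\zeta'_p,\dots}\bigr)$, where $D(\zeta_j)=\sum(\zeta';\zeta'_1,\dots,\zeta'_p;\tau)$. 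Applying the $\mathcal P$-algebra axiom $\gamma_A(\zeta;\dots,\gamma_A(\zeta';\dots),\dots)=\gamma_A(\zeta\circ_j\zeta';\dots)$ to graft the generator $\zeta'$ into the $j$-th input of $\zeta$, one rewrites $d\bigl(a_{\Gamma^c,(1,\dots,r)}\bigr)$ as a signed sum of terms $\gamma_A\bigl(\zeta\circ_j\zeta';\,a_\bullet,\dots,a_\bullet\bigr)$ indexed by pairs consisting of a term of $D(\Gamma^c)$ and a term of $D$ applied to one of its non-identity branches.

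The heart of the argument is to see this sum vanishes. The structural input is that $\kappa\colon\mathcal P^{\antishriek}\to\mathcal P$ is the canonical twisting morphism, hence satisfies the Maurer--Cartan equation $\kappa\star\kappa=0$ in the operadic convolution algebra $\Hom_{\mathbb S}(\mathcal P^{\antishriek},\mathcal P)$; equivalently, the weight $2$ component $\mathcal F^c(sE,s^2R)^{(2)}=s^2R$ is annihilated after applying $\kappa$ to its two cogenerators and composing in $\mathcal F(E)$, because $R$ dies in $\mathcal P=\mathcal F(E)/(R)$. I would reorganise the sum from the first paragraph using coassociativity of the decomposition map $\Delta$ of $\mathcal P^{\antishriek}$: the operation ``peel the last (bottom) vertex of $\Gamma^c$, then peel the last vertex of the $j$-th branch, summed over $j$'' reassembles, after grouping terms by the tuple of cooperations left in the outer slots, into ``extract the bottom weight $2$ sub-cooperation of $\Gamma^c$''. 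Since $\Gamma^c$ lies in $\mathcal P^{\antishriek}=\mathcal F^c(sE,s^2R)$, each such sub-cooperation lies in $s^2R$, so applying $\kappa$ to its cogenerators and composing in $\mathcal F(E)$ returns an element of $R$, which vanishes in $\mathcal P^{(2)}=\mathcal F(E)^{(2)}/R$. Thus every grouped summand equals $\gamma_A(0;a_\bullet,\dots)=0$, and $a_{\Gamma^c,(1,\dots,r)}$ is a cycle. It is cleanest to package this as an induction on the weight $n$: the base case $n=2$ is exactly the identity $\kappa\star\kappa|_{R}=0$, and for $n>2$ one uses that every branch $\zeta_j$ has weight strictly smaller than $n$ together with the coassociativity bookkeeping just sketched.

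I expect the main obstacle to be the sign bookkeeping. The sign $\gamma$ of Definition~\ref{def : Higher-order Massey products for algebras over algebraic operads}, combining the shuffle sign $\alpha$, the Koszul signs from commuting the degree $-1$ operations past the inputs, and the correction $\sum(|\zeta_i|-\operatorname{wgt}(\zeta_i))(\cdots)$, is tailored precisely so that the terms produced by the Leibniz expansion and the defining-system substitution line up with the permutation signs and assemble into $\gamma_A$ of a genuine element of $R$, rather than merely cancelling in pairs; verifying this is the bulk of the work. A secondary difficulty is making the ``re-collection via coassociativity'' precise: one must check, using the universal property characterising $\mathcal F^c(sE,s^2R)$ as a sub-cooperad of $\mathcal F^c(sE)$, that grouping the double-peeled terms by their outer structure reproduces exactly the weight $2$ part of $\Delta(\Gamma^c)$. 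A more conceptual alternative route would be to extend a defining system to a ``partial'' morphism of $\mathcal P^{\antishriek}$-coalgebras $F\colon\mathcal P^{\antishriek}(H)\to\mathcal P^{\antishriek}(A)$ truncated by arity in the style of Theorem~\ref{teo : Homotopy Transfer Theorem}, and to read off $d\bigl(a_{\Gamma^c,(1,\dots,r)}\bigr)=0$ from the chain-map identity $F\circ\delta=\delta_A\circ F$ in the relevant weight; the content would then migrate into showing that defining systems are precisely such truncated coalgebra morphisms.
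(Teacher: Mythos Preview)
Your proposal is correct and follows essentially the same route as the paper: apply $d$ and the Leibniz rule, substitute the defining-system relations, and recognise that the resulting expression is governed by a map $\mathcal P^{\antishriek}\to\mathcal P\circ\mathcal P^{\antishriek}$ which factors through $R\circ\mathcal P^{\antishriek}$ because the weight~$2$ part of $\mathcal P^{\antishriek}$ is $s^2R$. The paper makes your ``re-collection via coassociativity'' step precise by writing down an explicit composite $G$ and comparing two combinatorial descriptions of its action on tree monomials (exactly your ``peel twice'' versus ``project $\Delta^+$ to weight~$2$''), rather than packaging it as an induction on weight; but the content is the same, and the paper is equally silent on the sign bookkeeping you flag.
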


\begin{proof}
Let $\{a_\alpha\}$ be a defining system,
and denote by $x$ the associated cycle given by formula (\ref{ecu : Massey cycle}),
\begin{equation*}
    x=\sum (-1)^\gamma\zeta\left( a_{\zeta_1, \left(\sigma^{-1}\left(1\right),\sigma^{-1}\left(2\right),\dots, \sigma^{-1}\left(v_1\right)\right)},\dots,a_{\zeta_m,\left(\sigma^{-1}\left(v_1+\cdots+ v_{m-1}+1\right),\dots, \sigma^{-1}\left(r\right)\right)} \right).
\end{equation*}
Let us compute $dx$ in terms of the Massey inductive map $D$ and terms of the form $a_{\mu, \left(k_1,\dots k_i\right)}$. 
Recall that the differential of $A$ fits into the commutative diagram
\begin{center}
    \begin{tikzcd}
\mathcal P\circ A \arrow[d, "\operatorname{id}\circ' d", swap] \arrow[rr,"\gamma_A"]&& A \dar{d} \\
\mathcal P\circ A \arrow[rr,"\gamma_A"]&& A
\end{tikzcd}
\end{center}
where $\circ'$ is the infinitesimal composite. 
From here, it follows that 
\[
dx = d\left(\sum \zeta\left( a_{\zeta_1, \left(\sigma^{-1}\left(1\right),\sigma^{-1}\left(2\right),\dots, \sigma^{-1}\left(v_1\right)\right)},\dots,a_{\zeta_m,\left(\sigma^{-1}\left(v_1+\cdots+ v_{m-1}+1\right),\dots, \sigma^{-1}\left(r\right)\right)} \right)\right)
\]
is equal to
\[
\sum \sum_{i=1}^m (-1)^{\epsilon_i}\zeta\left( a_{\zeta_1, \left(\sigma^{-1}\left(1\right),\sigma^{-1}\left(2\right),\dots, \sigma^{-1}\left(v_1\right)\right)},\dots, d\left(a_{\zeta_i, \left(\sigma^{-1}\left(v_1+\cdots+ v_{i-1}+1\right),\dots, \sigma^{-1}\left(v_1+\cdots+ v_{i}\right)\right)}\right), 
\dots a_{\zeta_m,\left(\sigma^{-1}\left(v_1+\cdots+ v_{m-1}+1\right),\dots, \sigma^{-1}\left(r\right)\right)}\right),
\]
where 
\[
\epsilon_i = |\zeta|+ |a_{\zeta_1, \left(\sigma^{-1}\left(1\right),\sigma^{-1}\left(2\right),\dots, \sigma^{-1}\left(v_1\right)\right)}|+\cdots + |a_{\zeta_{i-1}, \left(\sigma^{-1}\left(v_1+\cdots+ v_{i-2}+1\right),\dots, \sigma^{-1}\left(v_1+\cdots+ v_{i-1}\right)\right)}|.
\]
Each term $d\left(a_{\zeta_i, (\sigma^{-1}(v_1+\cdots+ v_{i-1}+1)),\dots, \sigma^{-1}(v_1+\cdots+ v_{i}))}\right)$
appearing in the sum above can be rewritten in terms of $a_{\mu, \left(k_1,\dots k_i\right)}$ of lower order,
by using the inductive relation of the defining system (Def \ref{def: Defining systems for operadic Massey products}, item 2).
In particular, if we consider the composite
\begin{multline*}
G:\mathcal P^{\antishriek}\xrightarrow{\Delta^+}\mathcal P^{\antishriek}\circ \mathcal P^{\antishriek}
\xrightarrow{\kappa \circ\operatorname{id}} \mathcal P \circ \mathcal P^{\antishriek} \xrightarrow{f} \mathcal P \circ \left(\mathcal P^{\antishriek}; \mathcal P^{\antishriek}\right)\xrightarrow{\operatorname{id}\circ \left(\operatorname{id}; \Delta^+\right)} \mathcal P \circ(\mathcal P^{\antishriek}; \mathcal P^{\antishriek}\circ \mathcal P^{\antishriek})  \xrightarrow{\operatorname{id}\circ \left(\operatorname{id}; \kappa\circ \operatorname{id}\right)}\\
\mathcal P \circ\left(\mathcal P^{\antishriek}; \mathcal P\circ \mathcal P^{\antishriek}\right) \xrightarrow{p}
\mathcal P \circ\left(\mathcal P^{\antishriek}; \mathcal P^{\antishriek}\right)
\xrightarrow{q}
\mathcal P\circ \mathcal P^{\antishriek},
\end{multline*}
where $f$ is the natural inclusion, 
$p$ is induced by the partial composition in $\mathcal P$, 
and $q$ is the forgetful map,
then the element $dx$ is given by 
$$\sum \xi\left( a_{\xi_1, \left(\sigma^{-1}\left(1\right),\dots, \sigma^{-1}\left(v_1\right)\right)},\dots,a_{\xi_m,\left(\sigma^{-1}\left(v_1+\cdots+ v_{m-1}+1\right),\dots, \sigma^{-1}\left(r\right)\right)} \right),$$ 
where $G(\Gamma^c) = \sum \left(\xi ; \xi_1,\dots,\xi_m; \sigma\right)$.
So to prove the result,
it suffices to show that $G$ is identically 0. 
We shall do this by showing that $ \operatorname{Im}G\subseteq R\circ \mathcal P^{\antishriek}$,
where $R$ is the sub-module of relations in the fixed presentation $\mathcal P = \mathcal F(E,R)$. 
Recall that $\mathcal P^{\antishriek}$ can be thought of as a subset of the tree module and all the maps defining $G$ descend from
maps on or between the free operad on $E$ and the cofree conilpotent cooperad on $sE$. 
It follows that we may describe $G$ combinatorially by giving its action on individual basis tree monomials $T$ of $\mathcal{F}^c(sE)$. 
This action is as follows.
\begin{enumerate}
    \item 
    First, apply the Massey inductive map $D$. 
    This is sending the tree monomial $T$ to a sum of tree monomials of the form
    $\left(s^{-1}e; T_1, \dots T_m\right)$, 
    where $e\in (sE)(m)$ is the label of the first vertex and $T_1,\dots, T_m$ are its children.
    \item
    Now, repeat this procedure on each $T_i$ individually,
    thereby obtaining sums of tree monomials of the form $\left(s^{-1}e_i; T_{i,1}, \dots T_{i,m_i}\right)$,  
    and take for each individual tree monomial $T_i$ the sum over the results to obtain
    \[
    (-1)^{\epsilon_i}\sum_{i=1}^m \left(s^{-1}e; T_1, \dots,\left(s^{-1}e_i; T_{i,1}, \dots T_{i,m_i}\right),\dots  T_m\right).
    \]
    Here, each $e_i$ is the first vertex of the corresponding $T_i$.
    \item
    The final step is to apply the partial composition in the free operad $\mathcal F(E)$ in order to obtain 
    $$
    \sum_{i=1}^m \left(s^{-1}e\circ _i s^{-1}e_i ; T_1, \dots, T_{i-1}, T_{i,1}, \dots T_{i,m_i}, T_{i+1 },\dots  T_m\right).
    $$
    This time, without the suspension.
\end{enumerate}

From this description, it follows that there is another equivalent way to describe $G$:

\begin{itemize}
    \item 
    First, directly apply the cooperadic reduced decomposition map $\Delta^+$ to $T$ to obtain
    \[
    \Delta^+(T) = \sum \left(S; S_1, \dots  S_k\right). 
    \]
    \item
    Then, project the first component of $\mathcal{F}^c\left(sE\right) \circ \mathcal{F}^c\left(sE\right)$ into weight 2. 
    That is, map $S$ to itself if it is in weight 2, and map it to 0 otherwise.
    This produces
    \[
    \sum_{i=1}^m \left(e\circ _i e_i ; T_1, \dots, T_{i-1}, T_{i,1}, \dots T_{i,m_i}, T_{i+1 },\dots  T_m\right).
    \]
    \item Desuspend the tree monomial $e\circ _i e_i$ twice. 
\end{itemize}
From this description, it follows that 
\[
\sum_{i=1}^m \left(e\circ _i e_i ; T_1, \dots, T_{i-1}, T_{i,1}, \dots T_{i,m_i}, T_{i+1 },\dots  T_m\right) \in \mathcal P^{\antishriek (2)}\circ \mathcal P^{\antishriek},
\]
and thus that
\[
    \sum_{i=1}^m \left(s^{-1}e\circ _i s^{-1}e_i ; T_1, \dots, T_{i-1}, T_{i,1}, \dots T_{i,m_i}, T_{i+1 },\dots  T_m\right) \in R\circ \mathcal P^{\antishriek}.
\]
This is exactly what we wanted to prove.
\end{proof}

In the next result, we show that our higher-order Massey products
recover the secondary Massey products defined by Muro in \cite{Mur21}
when restricting to cooperations of weight 2,
up to a sign.
The construction of Muro is recalled in Section \ref{sec: secondary Massey products for algebras over operads},
and we stick to the notation used there.

\begin{proposition}
    \label{prop : Our products coincide with Muro's}
    Let $\mathcal P$ be a Koszul operad with fixed presentation $\mathcal F(E,R)$.
    Let 
    \[
    \Gamma=\sum\left(\mu^{\left(1\right)}\circ_l\mu^{(2)}\right)\cdot\sigma\in R(r)
    \]
    be a quadratic relation of arity $r$,
    and denote the corresponding weight $2$ element of the Koszul dual cooperad $\mathcal P^{\antishriek}$ by $\Gamma^c$, 
    so that
    \[
    \Gamma^c := s^2\left(\Gamma\right) = \sum (-1)^{|\mu^{(1)}|}    
    \left(s\mu^{\left(1\right)}\circ_ls\mu^{(2)}\right)\cdot\sigma.
    \]
    Let $A$ be a $\mathcal P$-algebra, and let $x_1,...,x_r\in H_*(A)$ be homogeneous elements.
    Then the $\Gamma$-Massey product set $\langle x_1,\dots,x_r\rangle_\Gamma$ of  
     Def. \ref{Def: Massey product of Muro}
    and the $\Gamma^c$-Massey product set $\langle x_1,\dots,x_r\rangle_{\Gamma^c}$ of 
    Def. \ref{def : Higher-order Massey products for algebras over algebraic operads} are the same up to a sign,
        \[
        \langle x_1,\dots,x_r\rangle_\Gamma = \pm \langle x_1,\dots,x_r\rangle_{\Gamma^c}.
        \]
        In particular, the Massey product set $\langle x_1,\dots,x_r\rangle_\Gamma$ is non-empty if , and only if, 
    the $\Gamma^c$-Massey product set $\langle x_1,\dots,x_r\rangle_{\Gamma^c}$ is non-empty.
\end{proposition}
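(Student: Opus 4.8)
The plan is to unwind both definitions for a weight $2$ cooperation $\Gamma^c$ and check that the defining systems on both sides carry the same data, after which the two cycles differ only by a sign that can be absorbed into the $\pm$. First I would compute the Massey inductive map on $\Gamma^c$. Since $\Gamma^c = \sum (-1)^{|\mu^{(1)}|}(s\mu^{(1)}\circ_l s\mu^{(2)})\cdot\sigma$ is a sum of tree monomials with two internal vertices, the combinatorial description of $D$ given just after its definition says that $D$ sends each such tree to $(s^{-1}s\mu^{(1)}; T_1,\dots,T_m;\,\cdot\,) = (\mu^{(1)}; \id,\dots,\id, s\mu^{(2)}, \id,\dots,\id;\,\cdot\,)$, where the nontrivial child $s\mu^{(2)}$ sits in the slot determined by $\circ_l$ and the permutation is $\sigma$ (read as an unshuffle of the block sizes). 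In particular the only child of positive weight is $s\mu^{(2)}\in\mathcal P^\antishriek(r_2)^{(1)}$, and all other children are identities of weight $0$.

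Next I would describe $I(\Gamma^c)$. By item (1) of Definition \ref{def : indexing sets}, $I(\Gamma^c)$ contains one element $(\zeta_i,(\dots))$ for each child: the identity children give indices of the form $(\id,(k))$, and the one genuine child gives $(s\mu^{(2)}, (\sigma^{-1}(l),\dots,\sigma^{-1}(l+r_2-1)))$ — matching, up to relabeling of $\mu^{(2)}$ as a cooperation, the list of arguments in Muro's Equation \eqref{ecu : vanishing}. Because $s\mu^{(2)}$ has weight $1$, Remark \ref{remark : indexing for weight 1} applies: closing up under item (2) only adds the indices $(\id,(\sigma^{-1}(l)))$, \dots, $(\id,(\sigma^{-1}(l+r_2-1)))$, which are already present as singleton cycle-choices. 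Hence a $\Gamma^c$-defining system $\{a_\alpha\}$ consists of: cycle representatives $a_{(\id,(i))}$ of the $x_i$; and, for each summand, one element $a_{(s\mu^{(2)},(\dots))}$ with $d\bigl(a_{(s\mu^{(2)},(\dots))}\bigr) = \mu^{(2)}\bigl(a_{(\id,(\sigma^{-1}(l))),}\dots\bigr)$, using that $D(s\mu^{(2)}) = (\mu^{(2)};\id,\dots,\id;\id)$ and $\kappa(s\mu^{(2)}) = \mu^{(2)}$. This is exactly the data of a cycle choice $y_i$ together with elements $\rho^{(2)}$ satisfying \eqref{ecu : Condition on boundary of Fernando's}, so the two notions of defining system coincide (and the two Massey products are simultaneously empty or not).

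Finally I would compare the output cycles. The cycle $a_{\Gamma^c,(1,\dots,r)}$ of \eqref{ecu : Massey cycle} is $\sum (-1)^\gamma\,\mu^{(1)}\bigl(a_{(\id,(\sigma^{-1}(1))),}\dots, a_{(s\mu^{(2)},(\dots))}, \dots\bigr)$, with all identity slots filled by the $y_{\sigma^{-1}(j)}$ and the $l$-th slot by the $\rho^{(2)}$; this is term-by-term Muro's cycle from Definition \ref{Def: Massey product of Muro}. It remains to compare the two signs $\gamma$: Muro's exponent is $\alpha + |\mu^{(1)}| + (|\mu^{(2)}|-1)\sum_{m<l}|x_{\sigma^{-1}(m)}|$, while ours is $\alpha + \sum_{i\geq 2}(|\zeta_i|-\operatorname{wgt}(\zeta_i))(\sum_{k\leq v_1+\cdots+v_{i-1}}|x_{\sigma^{-1}(k)}|) + 1$. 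Since the only child with $|\zeta_i|\neq\operatorname{wgt}(\zeta_i)$ is $s\mu^{(2)}$, whose Koszul degree satisfies $|s\mu^{(2)}| = |\mu^{(2)}|+1$ and $\operatorname{wgt}(s\mu^{(2)}) = 1$, our Koszul-sign contribution is $(|\mu^{(2)}|)\sum_{k<l}|x_{\sigma^{-1}(k)}|$, which differs from Muro's $(|\mu^{(2)}|-1)\sum_{k<l}|x_{\sigma^{-1}(k)}|$ by exactly $\sum_{k<l}|x_{\sigma^{-1}(k)}|$; together with the extra $+1$ versus $+|\mu^{(1)}|$ and the global factor $(-1)^{|\mu^{(1)}|}$ in $\Gamma^c = s^2\Gamma$ this is a sign depending only on $\Gamma$ and the degrees of the $x_i$ — constant over all defining systems — hence may be pulled out into the claimed $\pm$. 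I expect the sign bookkeeping to be the main obstacle: one must be careful that the suspension convention $d_{sA}(sa) = -s\,d_A(a)$, the Koszul signs hidden in $\Delta^+$ and in the operad action on $A$, and the permutation sign $\alpha$ all match up, and that the residual sign genuinely does not depend on the choice of defining system. Everything else is a direct unwinding.
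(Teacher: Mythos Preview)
Your approach is essentially identical to the paper's: compute $D(\Gamma^c)$ (the paper phrases this as computing $\Delta^+(\Gamma^c)$ and then applying $\kappa$), identify the resulting $\Gamma^c$-defining system data with Muro's $\{y_i,\rho^{(2)}\}$, and compare the two output cycles term by term. Your treatment of $I(\Gamma^c)$ via Remark~\ref{remark : indexing for weight 1} and your explicit matching of $a_{(s\mu^{(2)},(\dots))}$ with $\rho^{(2)}$ are in fact more detailed than what the paper writes; the paper's proof is quite terse and, on the sign question, simply asserts that the two exponents $\gamma$ differ by $1$. Your more careful bookkeeping (isolating the contribution of the single weight-$1$ child $s\mu^{(2)}$ and tracking the global $(-1)^{|\mu^{(1)}|}$ from $\Gamma^c=s^2\Gamma$) is the right way to approach it, and your caveat that the residual sign must be checked to be independent of the defining system is exactly the point that needs care---neither you nor the paper fully carries this out, but the structure of the argument is the same.
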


\begin{proof}
One can directly verify that
\[
\Delta^+\left(\Gamma^c\right) = \sum (-1)^{|\mu^{(1)}|}(s\mu^{(1)}; \idd, \dots, \idd, s\mu^{\left(2\right)},\idd, \dots, \idd; \sigma).
\]
Since $\mu^{(1)}$ has weight 1, 
it follows that $\kappa(s\mu^{(1)}) = \mu^{(1)}$.
Therefore, a cycle representing the $\Gamma^c$-Massey product associated to the elements $x_1, \dots x_r$ is of the form
\[
\sum (-1)^\gamma \mu^{(1)}\left(a_{\idd, \sigma^{-1}\left(1\right)}, \dots, a_{\idd, \sigma^{-1}\left(l-1\right)}, a_{s \mu^{\left(2\right)}, \sigma^{-1}\left(l\right)},a_{\idd, \sigma^{-1}\left(l+r_1\right)} , \dots, a_{\idd, \sigma^{-1}\left(r\right)}  \right),
\]
as in Equation \eqref{ecu : Massey cycle}.
Now, the $a_{\idd,(i)}$ are just cycle representatives of the $x_i$.
To finish,
we will check that the element $a_{s \mu^{\left(2\right)}, (l)}$ 
satisfies exactly Condition (\ref{ecu : Condition on boundary of Fernando's})
in Muro's construction (Def. \ref{Def: Massey product of Muro}),
so it corresponds to the element $\rho^{(2)}$ there.
Indeed, since $s\mu^{\left(2\right)}$ has weight 1, it follows that 
$\Delta^+\left(s\mu^{\left(2\right)}\right) = \left(s\mu^{\left(2\right)};\idd, \idd \dots, ;\idd\right)$,
and so $D\left(s\mu^{\left(2\right)}\right) = \left(\mu^{\left(2\right)};\idd, \idd \dots, ;\idd\right)$. 
Therefore, 
\[
da_{s \mu^{\left(2\right)}, (l)} = \mu^{\left(2\right)}\left(a_{\idd, l}, \dots a_{\idd, l+r_1-1}\right).
\]
The sign $(-1)^{\gamma}$ that appears in Equation \eqref{ecu : Massey cycle} gives exactly Muro's sign plus one
because for binary operads, the weight equals the arity degree minus one. 
This completes the proof.
\end{proof}

In the following examples, 
we explain how our operadic framework for defining systems 
recovers the classical framework in the associative and Lie cases,
and then explain how it creates completely new higher-order operations.

\begin{example}
\label{example : Operadic def systems for Ass}
When $\mathcal P = \mathsf{Ass}$ is the associative operad,
our framework recovers the classical definition of higher-order Massey products as in Def. \ref{Def : associative higher Massey products}.  
To see this, 
recall from Example \ref{Ex:AssIndexingSet}
that 
the weight $n$ component of $\mathsf{Ass}^{\antishriek}$ 
is freely generated as an $\mathbb{S}_{n+1}$-module by a single generator $\mu^c_{n+1}$,
and that the $\mu^c_{n}$-indexing set attached to a cooperation is given by
\[
\left\{\left(\mu^c_{k}, (i, i+1, \dots, i+i-1)\right)\  \mid \ 1\leq k < n \textrm{ and } i\in \left\{1, 2, \dots, n-k+1\right\}  \right\}.
\]
We show next that fixing a particular differential graded associative algebra $(A,d)$ 
and homogeneous homology classes $x_1,...,x_n \in H_*(A)$,
there is a bijective correspondence between the classical 
defining systems $\{b_{ij}\}$ for the higher-order Massey product $\langle x_1,...,x_n\rangle$,
and the defining systems $\{a_\alpha\}$ for the $\mu_n^c$-Massey product 
$\langle x_1,...,x_n\rangle_{\mu_n^c}$ as defined in this paper.
Indeed, the correspondence is given by 
\[
b_{i,j} = a_{\mu^c_{j-i}, \left(i+1,i+2,\dots, i+(j-i)=j\right)} \quad \textrm{ for all } \quad 0 \leq i < j \leq n \quad \textrm{ and }\quad  1 \leq j-i \leq n-1.
\]
To finish, it suffices to 
compare the boundaries of the elements in these sets.
Recall that
\[
D\left(\mu_r^c\right) = \sum_{l_1+l_2 = r} (-1)^{l_1+1}\left(\mu_2; \mu_{l_1}^c, \mu_{l_2}^c; \idd\right).
\]
Therefore, by directly applying Definition \ref{def: Defining systems for operadic Massey products}, we see that
\begin{align*}
    db_{ij}     &= \sum_{k=i+1}^{j-1} \bar b_{ik}b_{kj} = \sum_{k=i+1}^{j-1} (-1)^{|b_{ik}|+1} b_{ik}b_{kj} 
                = \sum_{k=i+1}^{j-1} (-1)^{|b_{ik}|+1} a_{\mu^c_{k-i},(i+1,i+2,...,k)}\cdot a_{\mu^c_{j-k},(k+1,k+2,...,j)}\\[0.2cm]
                &=  \sum_{k=i+1}^{j-1} (-1)^{|a_{\mu^c_{k-i},(i+1,i+2,...,k)}|+1} a_{\mu^c_{k-i},(i+1,i+2,...,k)}\cdot a_{\mu^c_{j-k},(k+1,k+2,...,j)}\\[0.2cm]
                &= \sum_{l_1+l_2=j-i} (-1)^{l_1+1} (-1)^\gamma a_{\mu^c_{l_1},(k-l_1+1,k-l_2+2,...,k)}\cdot a_{\mu^c_{l_2},(k+1,k+2,...,k+l_2)}
                = d a_{\mu^c_{j-i}, \left(i+1,i+2,\dots, i+(j-i)=j\right)}.
\end{align*}
where $\gamma = |x_{i+1}| + |x_{i+2}|+\cdots + |x_{i+l_1}|$ and the change of sign from the second to the third line follows from the equality
$$
|a_{\mu^c_{k-i},(i+1,i+2,...,k)}| = |x_{i+1}| + |x_{i+2}|+\cdots + |x_{k}| + |\mu^c_{k-i}|+1.
$$
\hfill$\square$
\end{example}

 The observant reader will likely have spotted that the above is just one of the several linearly independent Massey products 
 that $\mathsf{Ass}$ possesses.
 In fact, there are different, linearly independent Massey products for each permutation $\sigma\in \mathbb{S}_n$, 
 since $\tau^c_{n}\cdot\sigma$ is linearly independent of $\tau^c_{n}$. 
 Up to a sign, these are related by
 $\langle x_1, \dots x_n\rangle_{\tau^c_{n}\cdot\sigma} 
 = \left\langle x_{\sigma^{-1}(1)}, \dots x_{\sigma^{-1}(n)}\right\rangle_{\tau^c_{n}}$,
 see Prop. \ref{prop : Elementary properties of the operadic Massey products}.
Similarly, different presentations of an operad 
(in the associative case, one could take for example the Livernet--Loday presentation \cite[Prop. 9.1.1]{loday12})
give rise to seemingly distinct Massey products,
which are just the same expressed with respect to a different basis.

\begin{example}
\label{example: Lie--Massey brackets}
When $\mathcal P = \mathsf{Lie}$ is the Lie operad, 
our framework recovers the classical definition of higher Lie--Massey brackets as in \cite{Allday77,Ret77} 
(see also \cite{Tan83,jose17}).
To see this,
recall that the weight $n$ part of $\mathsf{Lie}^{\antishriek}$ is one-dimensional 
and generated by $\tau^c_{n+1}\in \mathsf{Lie}^{\antishriek}(n+1)$.
Recall also from Example \ref{Ex:LieIndexingSet} that in this case, 
the $\tau^c_{n}$-indexing set is
\[
I\left({\tau^c_n}\right):= \left\{(\tau^c_{k}, (i_1, \dots i_k))\mid \ k\leq n \mbox{ and }1\leq i_1\leq\cdots\leq i_l\leq n \right\}.
\]
We show next that fixed a particular differential graded Lie algebra $(L,d)$ 
and homogeneous elements $x_1,...,x_n\in H_*(L)$,
there is a bijective correspondence between the classical defining systems 
$\{x_{j_1,...,j_l}\}$ of  \cite{Allday77}
for the higher-order Whitehead product $[x_1,...,x_n]$,
and the defining systems $\left\{a_\alpha\right\}$
for the $\tau^c_{n}$-Massey product as defined in this paper.
Indeed,
the correspondence is given by 
\[
x_{j_1,\dots,j_l} = a_{\tau^c_{l}, \left(j_1,\dots,j_l\right)} \quad \textrm{ for all } \ 1\leq j_1\leq\cdots\leq j_l\leq n.
\]
Recall from Example \ref{Ex:LieIndexingSet} that 
\[
D\left(\tau_n^c\right) 
= \sum_{\substack{r_1+r_2 = n\\ \sigma\in \overline{Sh}^{-1}\left(r_1, r_2\right)}} (-1)^{r_1+1}\operatorname{sgn}(\sigma)\left(\tau_2; \tau_{r_1}^c, \tau_{r_2}^c; \sigma\right).
\]
Therefore, by directly applying Definition \ref{def: Defining systems for operadic Massey products}, we see that
\begin{align*}
    dx_{j_1,\dots,j_l} &= \sum_{p=1}^l  \sum_{\sigma\in \overline{Sh}(p, l-p)} \epsilon(\sigma) \left[x_{j_{\sigma(1),\dots, \sigma(p)}}, x_{j_{\sigma(p+1),\dots, \sigma(l)}}\right]\\
    &= \sum_{\substack{r_1+r_2 =l\\ \sigma\in \overline{Sh}^{-1}(r_1, r_2)}}(-1)^{r_1+1}\operatorname{sgn}(\sigma) \tau_2\left( a_{\tau_{r_1}^c, \left(j_{\sigma^{-1}(1)}, j_{\sigma^{-1}(2)},\dots, j_{\sigma^{-1}(r_1)}\right)}, a_{\tau_{r_2}^c, \left(j_{\sigma^{-1}\left(r_1+1\right)},\dots, j_{\sigma^{-1}\left(l\right)}\right)} \right) \\
    &= d a_{\tau^c_{l}, \left(j_1,\dots,j_l\right)}.
\end{align*}
\hfill$\square$
\end{example}

As likely expected, the higher-order Massey products for commutative differential graded associative algebras
coincide with those formed by forgetting that the structure is commutative. 
This can be seen as a consequence of the theory developed in the next section, 
see Example \ref{Example: Commutative Massey Products}.

\medskip

In \cite{Mur21},
Muro contributed a new kind of triple Massey-product operation for Gerstenhaber and/or Poisson algebras.
Our framework recovers this triple operation as a consequence of  Proposition \ref{prop : Our products coincide with Muro's}.
It follows from our results that all the higher-order analogs of this new
operation also exist. 
Although we will not give closed formulas, 
we hope these higher products will be successfully applied in the future in cases where the triple-product operation 
defined by Muro does not suffice.

\begin{example}
Recall that the Poisson operad $\mathsf{Pois}$ is self-Koszul dual,
generated by a commutative associative product $\wedge$ and a Lie bracket $[-,-]$,
both of degree zero, which are compatible via the Poisson relation,
\[
[x\wedge y,z] = x\wedge [y,z] + (-1)^{|y||z|} [x,z]\wedge y.
\]
The inner combinatorics of this operad are complex, 
and multiplying base elements frequently involves complicated rewriting procedures. 
Therefore, we cannot hope to write down formulas that are quite as clean as 
in examples \ref{example : Operadic def systems for Ass} and \ref{example: Lie--Massey brackets}. 
Nonetheless, it is possible to compute Poisson Massey products inductively in low weight.

For example, if one considers $[-,-] \wedge -  \in \mathsf{Pois}^{\antishriek}(3)$,
where we are taking the Koszul suspensions to be implicit, 
one has

\[
D\left([-,-] \wedge -\right) = \left(\wedge;[-,-],\id\right) - \left([-,-];\id, \wedge,\right) - \left([-,-];\id, \wedge,\right)\cdot (2, 3).
\]
Recall that $\kappa[-,-] = \wedge$ and $\kappa(\wedge) = [-,-].$ 
A defining system for the Massey product $\langle x_1,x_2,x_3\rangle$ associated to 
the cooperation above in a Poisson algebra
is therefore a set  of elements $\left\{z_1,z_2,z_3,y_1,y_2, y_3\right\}$
where each $z_i$ is a cycle representative of $x_i$ for $i=1,2,3$,
and 
\[
dy_1 = z_1\wedge z_2, \qquad dy_2 = [z_2, z_3] \qquad  dy_3 = [z_1, z_3 ].
\]
The cycle representative associated to this defining system is 
$$
[y_1, z_3] -(-1)^{|z_1|} z_1\wedge y_2 - (-1)^{|z_2|+|z_1||z_2|}z_2\wedge  y_3.
$$

Similarly, if we consider $[-,-\wedge -] \in\mathsf{Pois}^{\antishriek}(3)$,
\[
D\left([-,-\wedge -] \right) = ([-, -]; \id, \wedge ) + (\wedge; [-,-], \id) + (\wedge ; \id, [-,-] ) \cdot (1,2)
\]

A defining system for the Massey product $\langle x_1,x_2,x_3\rangle$ associated to 
this cooperation in a Poisson algebra
is therefore a set  of elements $\left\{z_1,z_2,z_3,y_1,y_2, y_3\right\}$
where each $z_i$ is a cycle representative of $x_i$ for $i=1,2,3$,
and 
\[
dy_1 = [z_2, z_3], \qquad dy_2 = z_1 \wedge z_2  \qquad  dy_3 = z_1\wedge z_3.
\]
The cycle representative associated to this defining system is
$$
z_1\wedge  y_1 -(-1)^{|z_1|} [y_2, z_3] - (-1)^{|z_2|+|z_1||z_2|} [z_2,  y_3].
$$
\hfill$\square$
\end{example}

\begin{example}
We continue the previous example by computing the higher Massey product corresponding to $\wedge \circ ( [-,-], [-,-]) \in \mathsf{Pois}^{\antishriek}(4)$. If one takes the Koszul suspensions and Koszul signs to be implicit, 
one has:
\begin{align*}
& \triangle^+\left(\wedge \circ \left( [-,-], [-,-]\right)\right) \\[0.2cm] 
& \quad = \left(\wedge;[-,-],[-,-] \right) 
+
\left(\wedge \circ (-, [-,-]);[-, -], \id, \id\right) + \left(\wedge \circ ([-,-], -);\id, \id, [-,-],\right) \\
&\qquad  + \left(\wedge;\id,[-,-\wedge -] \right) + \left(-\wedge [-,-]; \id, \id, \wedge\right) + \left(\wedge;[-,-\wedge -],\id  \right)\cdot (2,4,3) \\
&\qquad + \left([-,-]\wedge -; \id, -\wedge -, \id\right)\cdot  (2,4,3) + 2 \left(\wedge;\id, -\wedge [-,-],  \right)\cdot (2,4,3) + 2\left(-\wedge-\wedge -; \id, \id, [-,-]\right)\cdot  (2,4,3) \\
&\qquad + 2\left(\wedge; -\wedge -, [-,-]\right)\cdot  (2,4,3) + 2\left(\wedge;\id, [-,-]\wedge - \right) + 2\left(-\wedge-\wedge -;\id, [-,-], \id  \right) \\
&\qquad + 2\left(\wedge ;-\wedge[-,-], \id  \right) + 2\left(\wedge;\id, [-,-]\wedge - \right)\cdot (1,2,4,3) +  2\left(-\wedge-\wedge -;\id, [-,-], \id  \right) \cdot (1,2,4,3) \\
&\qquad + 2\left(\wedge ;-\wedge[-,-], \id  \right) \cdot (1,2,4,3) + \left(\wedge;\id, [-\wedge -,-], \id  \right) \cdot (1,2,3) + \left(-\wedge[-,-];\id, -\wedge -,\id  \right)  \cdot (1,2,3) \\
&\qquad + \left(\wedge;[-\wedge -, - ], \id  \right) + \left([-,-]\wedge-;\wedge, \id  \right) 
\end{align*}
This means that 
\begin{align*}
& D\left(\wedge \circ \left( [-,-], [-,-]\right)\right) \\[0.2cm] 
& \quad = \left(\wedge;[-,-],[-,-] \right)  +  \left(\wedge;\id,[-,-\wedge -] \right) +  \left(\wedge;[-,-\wedge -],\id  \right)\cdot (2,4,3) + 2 \left(\wedge;\id, -\wedge [-,-],  \right)\cdot (2,4,3) \\
&\qquad + 2\left(\wedge; -\wedge -, [-,-]\right)\cdot  (2,4,3) + 2\left(\wedge;\id, [-,-]\wedge - \right)   + 2\left(\wedge;-\wedge[-,-], \id  \right) + 2\left(\wedge;\id, [-,-]\wedge - \right)\cdot (1,2,4,3)  \\
&\qquad + 2\left(\wedge ;-\wedge[-,-], \id  \right) \cdot (1,2,4,3) + \left(\wedge;\id, [-\wedge -,-], \id  \right) \cdot (1,2,3)  + \left(\wedge;[-\wedge -, - ], \id  \right) 
\end{align*}
Now we compute the product corresponding to the equation above.
This means that the Massey product may be computed as being, up to Koszul sign
\begin{align}
\label{4thorderMasseyproduct}
\begin{split}
& \quad = [y_{\wedge, (1,2)}, y_{\wedge, (3,4)}]  +  [z_1, x_{b, (2,3,4)}] + [ x_{b, (1,3,4)},z_2] + 2 [z_1, x_{a,(3, 4, 2)}  ] \\
&\qquad + 2[y_{[-], (1,3)}, y_{\wedge, (4, 2)}] + 2[z_1, x_{a, (2,3,4)} ]   + 2[x_{a, (1,2,3)}, z_4 ] + 2[z_3, x_{a, (1, 4,2)} ]  \\
&\qquad + 2[x_{a,(1,4, 3)}, z_2 ] + [x_{b,(2,3,1)}, z_4 ]   + [x_{b, (3, 1, 2)}, z_4 ]
\end{split}
\end{align}
where
\[
dy_{\wedge, (i,j)} = z_i \wedge z_j, \qquad  y_{[-], (i,j )} = [z_i, z_j] 
\]
\[
dx_{a, (i,j,k)} = [y_{\wedge, (i,j)}, z_k] -(-1)^{|z_i|} z_j\wedge y_{[-], (j,k)} - (-1)^{|z_j|+|z_i||z_j|}z_j\wedge  y_{[-], (i,k)}
\]
\[
dx_{b, (i,j,k)} = z_i\wedge  y_{[-], (j, k)} -(-1)^{|z_i|} [y_{\wedge, (i,j)}, z_k] - (-1)^{|z_j|+|z_i||z_k|} [z_j,  y_{\wedge, (i,k)}] 
\]
\hfill$\square$
\end{example}

The signs missing in each term of (\ref{4thorderMasseyproduct}) can be computed as follows. 
These signs arise in three ways:
\begin{itemize}
    \item 
    Firstly, the products in $\Delta^+$ come with the usual Koszul signs.
    \item 
    Secondly, one has those signs corresponding to $\gamma$ in Equation \eqref{ecu : Massey cycle}.
    \item
    Thirdly, to simplify the expression, for the three terms on the final line, 
    we use the (anti)commutativity of the generating cooperations. 
    This introduces signs coming from the (signed) identities
    \[
    [x, y] = -(-1)^{|x||y|} [y, x] \qquad \textrm{ and } \qquad x\wedge y = (-1)^{|x||y|} y\wedge x.
    \]
\end{itemize}

Our final example will illustrate the close connection of Massey products with spectral sequences.
We point the reader to \cite[10.3.7]{loday12} for some of the basic background on this example.

\begin{definition}
The \emph{dual numbers operad} is the quadratic operad $\mathcal D$ presented as
$$
\mathcal D := \mathcal F\left(\Bbbk \triangle, \triangle\circ  \triangle \right),
$$
where $ \triangle$ is an arity 1 element of homological degree 1. 
\end{definition}

Algebras over this operad are precisely the bicomplexes, i.e.,
chain complexes $(A,d)$ equipped with an operation $\triangle:A\to A$ such that $\triangle^2=0$ and $d\triangle +\triangle d = 0$.
The dual numbers operad is Koszul,
and its Koszul dual cooperad is cofree conilpotent on a single generator,
$$
\mathcal D^{\antishriek} = \mathcal F^c \left(s\triangle\right).
$$
In particular, this cooperad has no corelations and is concentrated in degree 1.

\begin{example}
We shall compute the Massey products of the dual numbers operad. 
The arity $1$ component of $\mathcal D^\antishriek$ is
$$
\mathcal D^\antishriek(1) = \bigoplus \Bbbk \delta_n,
$$
where $\delta_n$ has weight $n$ and degree $2n.$ 
Since
\[
\Delta^+\left(\delta_n\right) = \sum_{k+l =n} \left(\delta_k; \delta_l\right),
\]
where $l\geq 1$ and $k\geq 0$, it follows that
$$
D(\delta_n)= (\triangle; \delta_{n-1}) \quad \mbox{ for all } n\geq 2.
$$
Therefore,
the $\delta_n$-indexing system is given by $\left\{a_{\delta_i}: 0<i< n\right\},$ 
with the relation $da_{\delta_i} = \triangle\left(a_{\delta_{i-1}}\right)$. 
This is almost the definition of the $d_{n-1}$-differential in the spectral sequence associated to the bicomplex $(A,d,\triangle)$. 
More precisely, one can check that if $x\in\langle y\rangle_{\delta_n}$ is defined in $H_*(A)$, 
then $y$ survives to the $E_{n-1}$-page of the associated spectral sequence and $d_{n-1}(y) = [x]$.
\hfill$\square$
\end{example}

\begin{remark}
\label{remark: Massey products for non-Koszul operads}
In our higher-order Massey products framework for Koszul operads, 
there is nothing special about the Koszul dual cooperad $\mathcal P^{\antishriek}$ 
aside from it being a very useful resolution.
In principle, 
starting
with any conilpotent cooperad $\mathcal C$ together with a choice of twisting morphism 
$\tau:\mathcal C \to \mathcal P$,
it is possible to define a \textit{relative Massey inductive map} 
\[
D:\mathcal C\xrightarrow{\Delta^+}\mathcal C\circ \mathcal C \xrightarrow{\tau\circ \operatorname{id}} \mathcal P \circ \mathcal C.
\]
From this, one defines \textit{relative Massey products} following, \emph{mutatis mutandis}, the same recipe we gave in the Koszul case.
Taking $\mathcal C = B\mathcal P$ to be the bar construction of $\mathcal P$
and $\tau:B\mathcal P\to \mathcal P$ the canonical twisting morphism,
this allows for defining Massey products for non-Koszul operads. 
\end{remark}

\subsection{Elementary properties of the operadic Massey products}
\label{sec: Elementary properties of the operadic Massey products}

In this section, 
we collect some elementary properties of the operadic Massey products.
First, 
we show 
that Massey product sets do not depend on the initial choice of cycles in the defining system
(Prop. \ref{prop : Massey products do not depend on cycles}).
Then, 
that morphisms of $\mathcal P$-algebras preserve Massey products (Prop. \ref{prop : Morphisms preserve Massey products}).
In particular, quasi-isomorphisms induce bijections of the corresponding Massey product sets.
This provides an obstruction 
for two $\mathcal P$-algebras to be weakly equivalent. 
In particular, a nontrivial Massey product provides an obstruction to formality.
At the end of the section, we collect
a few elementary properties of the Massey products that might be useful elsewhere (Prop. \ref{prop : Elementary properties of the operadic Massey products}).

\medskip

For the next few results, 
we fix a cooperation $\Gamma^c\in \left(\mathcal{P}^\antishriek \right)^{(n)} (k)$.
We say that a Massey product set $\langle x_1,...,x_k\rangle_{\Gamma^c}$ is
\emph{defined} if it is non-empty, that is, if there is some defining system for the Massey product; 
\emph{trivial} if it contains the zero homology class;
and \emph{non-trivial} if it is defined and does not contain the zero homology class. 

First, we shall show that Massey product sets do not depend on the initial choice of cycles in the defining system.

\begin{proposition}
\label{prop : Massey products do not depend on cycles}
Let $A$ be a $\mathcal P$-algebra. Suppose that $x_1,...,x_k \in H_*(A)$ are homogeneous elements such that
 the Massey product set $\langle x_1,...,x_k\rangle_{\Gamma^c}$ is defined. For each $x \in \langle x_1,...,x_k\rangle_{\Gamma^c}$  and each choice of cycle representative $\overline{x_i}$ for $x_i$, one has a defining system $\left\{a_{\beta}\right\}$ for $x$ such that $a_{\id, (i)} = \overline{x_i}$ 
\end{proposition}
	\begin{proof}
Let $\left\{b_{\beta}\right\}$ be a defining system for a 
Massey product $x \in \left\langle x_1,...,x_k\right\rangle_{\Gamma^c}$. 
We shall construct, by induction on the weight of the elements of the defining system, 
a defining system $\left\{a_{\beta}\right\}$ for a 
Massey product $x\in\langle x_1,...,x_k\rangle_{\Gamma^c}$ such 
that $a_{\id, (i)} = \overline{x_i}$ and that $a_{\Gamma^c, (1,\dots, k)}$ is homologous to $b_{\Gamma^c, (1,\dots, k)}$.

For the first step, simply fix $a_{\id, (i)} = \overline{x_i}$. Since $a_{\id, (i)}$ and $b_{\id, (i)}$ are both choices of representative for $x_i,$ it follows that $a_{\id, (i)} -b_{\id, (i)}$ is nullhomologous, 
which means that there is a $c_{\id, (i)} \in A$ such that
\[
dc_{\id, (i)} = a_{\id, (i)} -b_{\id, (i)}.
\]
The family $\left\{a_{\id, (i)}\right\}$ gives the first inductive step.
Now, suppose that for all indexes $\left(\mu, (i_1,\dots, i_k)\right)\in I\left(\Gamma^c\right)$ with the 
weight of $\mu$ strictly less than $N$, with $1  < N < n$, where $n$ is the weight of $\Gamma^c$,
we have constructed $a_{\mu, (i_1,\dots, i_k)},c_{\mu, (i_1,\dots, i_k)}  \in A$ 
such that
\begin{equation*}
d\left(a_{\mu,\left(i_1, \cdots, i_k\right)}\right) 
    = \sum \zeta \left(a_{\zeta_1,\left(i_{\sigma^{-1}\left(1\right)},\dots i_{\sigma^{-1}\left(v_1\right)}\right)},
    \dots,a_{\zeta_m,\left(i_{\sigma^{-1}\left(v_1+\cdots+ v_{m-1}+1\right)},\dots, i_{\sigma^{-1}(k)}\right)} \right),
\end{equation*}
where $    D\left(\mu\right) = \sum \left(\zeta ; \zeta_1,\dots,\zeta_m; \sigma\right)$, and 
\begin{equation*}
    dc_{\mu, (i_1,\dots, i_k)} = a_{\mu, (i_1,\dots, i_k)} -b_{\mu, (i_1,\dots, i_k)}+Q_{\mu, (i_1,\dots, i_k)},
\end{equation*}
where $Q_{\mu, (i_1,\dots, i_k)}$ is the sum:
\[
\sum \sum\zeta \left(x_{\zeta_1,\left(i_{\sigma^{-1}\left(1\right)},\dots i_{\sigma^{-1}\left(v_1\right)}\right)},\dots,
x_{\zeta_j,\left(i_{\sigma^{-1}\left(v_1+\cdots+ v_{j-1}+1\right)},\dots, i_{\sigma^{-1}\left(v_1+\cdots+ v_{j-1}+v_j\right)}\right)}
\dots,
x_{\zeta_m,\left(i_{\sigma^{-1}\left(v_1+\cdots+ v_{m-1}+1\right)},\dots, i_{\sigma^{-1}(k)}\right)} \right).
\]
Here, the outer summation is indexed by
$$
D\left(\mu\right) = \sum \left(\zeta ; \zeta_1,\dots,\zeta_m; \sigma\right),
$$ 
and for each term 
$\left(\zeta ; \zeta_1,\dots,\zeta_m; \sigma\right)$, 
the inner sum 
is taken over every possible choice of tuple 
\[
\left( x_{\zeta_1,\left(i_{\sigma^{-1}\left(1\right)},\dots, i_{\sigma^{-1}\left(v_1\right)}\right)},\dots,
 x_{\zeta_j,\left(i_{\sigma^{-1}\left(v_1+\cdots+ v_{j-1}+1\right)},\dots, i_{\sigma^{-1}\left(v_1+\cdots+ v_{j-1}+v_j\right)}\right)}
 \dots,
x_{\zeta_m,\left(i_{\sigma^{-1}\left(v_1+\cdots+ v_{m-1}+1\right)},\dots, i_{\sigma^{-1}(k)}\right)}\right),
\]
where one of the 
\[
x_{\zeta_j,\left(i_{\sigma^{-1}\left(v_1+\cdots+ v_{j-1}+1\right)},\dots, i_{\sigma^{-1}\left(v_1+\cdots+ v_{j-1}+v_j\right)}\right)}
\]
is precisely
\[
c_{\zeta_j,\left(i_{\sigma^{-1}\left(v_1+\cdots+ v_{j-1}+1\right)},\dots, i_{\sigma^{-1}\left(v_1+\cdots+ v_{j-1}+v_j\right)}\right)},
\]
anything to the left of it in the tuple is 
\[
a_{\zeta_j,\left(i_{\sigma^{-1}\left(v_1+\cdots+ v_{j-1}+1\right)},\dots, i_{\sigma^{-1}\left(v_1+\cdots+ v_{j-1}+v_j\right)}\right)},
\]
and anything to its right is 
\[
b_{\zeta_j,\left(i_{\sigma^{-1}\left(v_1+\cdots+ v_{j-1}+1\right)},\dots, i_{\sigma^{-1}\left(v_1+\cdots+ v_{j-1}+v_j\right)}\right)}.
\]
Now,
let $\left(\mu, (i_1,\dots, i_k)\right)\in I\left(\Gamma^c\right)$ have $\mu$ of weight $N$.
Then, $Q_{\mu, (i_1,\dots, i_k)}$ is well defined,
because the cooperations appearing in its defining tuple have weight strictly less than $\mu$. 
Its boundary is as follows:
\begin{equation}
    \label{ecu : differential of Q}
    dQ_{\mu, (i_1,\dots, i_k)} = db_{\mu, (i_1,\dots, i_k)} + \sum \zeta \left( a_{\zeta_1,(i_{\sigma^{-1}\left(1\right)}),\dots i_{\sigma^{-1}\left(v_1\right)}},\dots,a_{\zeta_m,\left(i_{\sigma^{-1}\left(v_1+\cdots+ v_{m-1}+1\right)},\dots, i_{\sigma^{-1}(k)}\right)} \right).
\end{equation}
Indeed, the sum $dQ_{\mu, (i_1,\dots, i_k)}$ can be separated into two parts: 
a telescoping part that converges to 
the right-hand side of the equation above, 
and a second part that can be divided into subsums each vanishing by arguments similar to the proof of Theorem \ref{Prop: cycle}. 
Now, from Equation (\ref{ecu : differential of Q}), 
we deduce that the element
\[
\sum \zeta \left(a_{\zeta_1,\left(i_{\sigma^{-1}\left(1\right)},\dots i_{\sigma^{-1}\left(v_1\right)}\right)},\dots,a_{\zeta_m,\left(i_{\sigma^{-1}\left(v_1+\cdots+ v_{m-1}+1\right)},\dots, i_{\sigma^{-1}(k)}\right)} \right)
\]
where the sum ranges over
$D\left(\mu\right) = \sum \left(\zeta ; \zeta_1,\dots,\zeta_m; \sigma\right)$,
is a cycle.
Therefore, there is an element $a'_{\mu, (i_1,\dots, i_k)}\in A$ such that
\[
da'_{\mu, (i_1,\dots, i_k)} =\sum \zeta \left(a_{\zeta_1,\left(i_{\sigma^{-1}\left(1\right)},\dots i_{\sigma^{-1}\left(v_1\right)}\right)},\dots,a_{\zeta_m,\left(i_{\sigma^{-1}\left(v_1+\cdots+ v_{m-1}+1\right)},\dots, i_{\sigma^{-1}(k)}\right)} \right).
\]
Define a  cycle 
\[
e_{\mu, (i_1,\dots, i_k)} =  a'_{\mu, (i_1,\dots, i_k)} - b_{\mu, (i_1,\dots, i_k)} + Q_{\mu, (i_1,\dots, i_k)}.
\]
Then there is an element 
$e'_{\mu, (i_1,\dots, i_k)}\in A$ such that $e'_{\mu, (i_1,\dots, i_k)}$ is homologous to $e_{\mu, (i_1,\dots, i_k)}$,
that is, such that 
\[
dc_{\mu, (i_1,\dots, i_k)} = e'_{\mu, (i_1,\dots, i_k)} - e_{\mu, (i_1,\dots, i_k)},
\]
and 
\[
a_{\mu, (i_1,\dots, i_k)} = a'_{\mu, (i_1,\dots, i_k)} - e'_{\mu, (i_1,\dots, i_k)}.
\]
It follows that
\[
d\left(a_{\mu,\left(i_1, \cdots, i_k\right)}\right) 
= \sum \zeta \left(a_{\zeta_1,\left(i_{\sigma^{-1}\left(1\right)},\dots i_{\sigma^{-1}\left(v_1\right)}\right)},\dots,a_{\zeta_m,\left(i_{\sigma^{-1}\left(v_1+\cdots+ v_{m-1}+1\right)},\dots, i_{\sigma^{-1}(k)}\right)} \right),
\]
where the sum ranges over
$D\left(\mu\right) = \sum \left(\zeta ; \zeta_1,\dots,\zeta_m; \sigma\right)$, 
and furthermore, that 
\[
dc_{\mu, (i_1,\dots, i_k)} = a_{\mu, (i_1,\dots, i_k)} -b_{\mu, (i_1,\dots, i_k)}+Q_{\mu, (i_1,\dots, i_k)}.
\]
This concludes the induction step. 
To finish, consider the element $Q_{\Gamma^c, (i_1,\dots, i_k)}$.
This is defined by the same logic as above, and its boundary satisfies
\[
dQ_{\Gamma^c, (i_1,\dots, i_k)} = a_{\Gamma^c, (i_1,\dots, i_k)} - b_{\Gamma^c, (i_1,\dots, i_k)}.
\]
Therefore, the elements $a_{\Gamma^c, (i_1,\dots, i_k)}$ and $b_{\Gamma^c, (i_1,\dots, i_k)}$
are homologous, as we wanted to prove.
\end{proof}
\begin{proposition}
\label{prop : Morphisms preserve Massey products}
	A morphism  of $\mathcal P$-algebras $f:A\to B$ preserves Massey products. 
	That is, 
	if $x_1,...,x_k \in H_*(A)$ are homogeneous elements such that
 the Massey product set $\langle x_1,...,x_k\rangle_{\Gamma^c}$ is defined, 
	then $\left\langle f_*\left(x_1\right),...,f_*\left(x_k\right)\right\rangle_{\Gamma^c}$ is also defined, 
	and moreover
	$$f_*\langle x_1,...,x_k\rangle_{\Gamma^c} \subseteq \left\langle f_*\left(x_1\right),...,f_*\left(x_k\right)\right\rangle_{\Gamma^c}.$$
	If furthermore $f$ is a quasi-isomorphism, 
	then $f_*$ induces a bijection between the corresponding Massey product sets.
\end{proposition}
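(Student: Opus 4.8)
The plan is to prove the inclusion $f_*\langle x_1,\dots,x_k\rangle_{\Gamma^c} \subseteq \langle f_*(x_1),\dots,f_*(x_k)\rangle_{\Gamma^c}$ by transporting defining systems through $f$, and then to upgrade this to a bijection when $f$ is a quasi-isomorphism. First I would recall that a morphism of $\mathcal P$-algebras $f : A \to B$ is a chain map strictly compatible with the structure maps $\gamma_A$, $\gamma_B$, that is, $f \circ \gamma_A = \gamma_B \circ (\mathrm{id}_{\mathcal P} \circ f)$. The key observation is that this compatibility is exactly what is needed for $f$ to send a $\Gamma^c$-defining system for $(x_1,\dots,x_k)$ in $A$ to a $\Gamma^c$-defining system for $(f_*(x_1),\dots,f_*(x_k))$ in $B$. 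Concretely, given a defining system $\{a_\alpha\}_{\alpha \in I(\Gamma^c)}$ in $A$, I set $b_\alpha := f(a_\alpha)$ for each $\alpha$. Since $f$ is a chain map and $f$ sends cycle representatives of $x_i$ to cycle representatives of $f_*(x_i)$, condition (1) of Definition \ref{def: Defining systems for operadic Massey products} holds. For condition (2), I apply $f$ to the defining equation
\[
d\left(a_{\mu,(k_1,\dots,k_i)}\right) = \sum \zeta\left(a_{(\zeta_1,\dots)},\dots,a_{(\zeta_m,\dots)}\right),
\]
and use $f d = d f$ on the left side and the strict compatibility $f \circ \gamma_A = \gamma_B \circ (\mathrm{id} \circ f)$ on the right side, so that $f\big(\zeta(a_{(\zeta_1,\dots)},\dots)\big) = \zeta(f(a_{(\zeta_1,\dots)}),\dots) = \zeta(b_{(\zeta_1,\dots)},\dots)$. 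Hence $\{b_\alpha\}$ is a $\Gamma^c$-defining system in $B$. The same reasoning applied to the cycle representative $a_{\Gamma^c,(1,\dots,r)}$ of Equation \eqref{ecu : Massey cycle}, together with the fact that the signs $\gamma$ depend only on the degrees $|x_i|$, $|\zeta_i|$, $\mathrm{wgt}(\zeta_i)$ and the permutation $\sigma$ (all preserved under $f$ since $f$ is degree $0$ and $\mathrm{wgt}$, $\sigma$ are combinatorial data intrinsic to $\Gamma^c$), shows that $f$ maps the Massey cycle associated to $\{a_\alpha\}$ to the Massey cycle associated to $\{b_\alpha\}$; taking homology classes gives $f_*\langle x_1,\dots,x_k\rangle_{\Gamma^c} \subseteq \langle f_*(x_1),\dots,f_*(x_k)\rangle_{\Gamma^c}$, and in particular the target set is non-empty whenever the source one is. This also requires observing that $I(\Gamma^c)$ is computed purely from $\mathcal P^\antishriek$ and $\kappa$ via the Massey inductive map $D$, hence is the \emph{same} indexing set on both sides — $f$ does not alter it.

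For the quasi-isomorphism statement, $f_*$ is already a bijection on homology, so it suffices to show the reverse inclusion $\langle f_*(x_1),\dots,f_*(x_k)\rangle_{\Gamma^c} \subseteq f_*\langle x_1,\dots,x_k\rangle_{\Gamma^c}$ and then conclude that the restriction of $f_*$ to the source set is surjective onto the target set (injectivity being inherited from injectivity of $f_*$ on all of $H_*(A)$). The natural route is to build, given an arbitrary $\Gamma^c$-defining system $\{b_\alpha\}$ in $B$ for $(f_*(x_1),\dots,f_*(x_k))$, a $\Gamma^c$-defining system $\{a_\alpha\}$ in $A$ for $(x_1,\dots,x_k)$ whose image under $f$ is homologous, class by class, to $\{b_\alpha\}$ — and in particular whose associated Massey cycle maps to one homologous to the Massey cycle of $\{b_\alpha\}$. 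This is done by induction on the weight degree $j$ of the cooperation $\mu$ appearing in the index $\alpha = (\mu,(k_1,\dots,k_i))$, exactly mirroring the downward-inductive structure of $I(\Gamma^c)$. At weight $1$ (the base case), $\alpha = (\mathrm{id},(i))$ and I pick $a_{(\mathrm{id},(i))}$ to be any cycle representative of $x_i$; then $f(a_{(\mathrm{id},(i))})$ and $b_{(\mathrm{id},(i))}$ are both cycle representatives of $f_*(x_i)$, hence homologous, say $b_{(\mathrm{id},(i))} - f(a_{(\mathrm{id},(i))}) = d c_{(\mathrm{id},(i))}$ for some $c_{(\mathrm{id},(i))} \in B$. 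At the inductive step for an index $(\mu,(k_1,\dots,k_i))$ with $\mathrm{wgt}(\mu) = j \geq 2$, I have already produced $a_\beta$ for all $\beta$ of lower weight together with elements $c_\beta \in B$ witnessing $b_\beta - f(a_\beta) = d c_\beta$ (plus lower-order corrections). The right-hand side $\sum \zeta(a_{(\zeta_1,\dots)},\dots)$ of the desired relation in $A$ is a cycle (this is essentially the content of Proposition \ref{Prop: cycle} applied one level down, or follows from the same computation); its image under $f$ differs from the corresponding cycle $\sum \zeta(b_{(\zeta_1,\dots)},\dots) = d b_{(\mu,(k_1,\dots,k_i))}$ in $B$ by an exact term, using the $c_\beta$'s and bilinearity/multilinearity of the operadic operations $\zeta$ — so $f\big(\sum \zeta(a_{(\zeta_1,\dots)},\dots)\big)$ is a boundary in $B$. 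Since $f$ is a quasi-isomorphism, a chain map between complexes that is surjective on homology and whose kernel is acyclic-enough — more simply, since $f_*$ is an isomorphism, a cycle in $A$ whose image in $B$ is a boundary must itself be a boundary in $A$. Thus there exists $a_{(\mu,(k_1,\dots,k_i))} \in A$ with $d a_{(\mu,(k_1,\dots,k_i))} = \sum \zeta(a_{(\zeta_1,\dots)},\dots)$, giving condition (2); and a further comparison (adjusting by a boundary in $B$, using that $f$ is surjective on homology, or that the mapping cone of $f$ is acyclic) produces the witness $c_{(\mu,(k_1,\dots,k_i))}$ needed to continue the induction. Running the induction to the top yields the desired $\{a_\alpha\}$ in $A$, and the associated Massey cycle maps under $f$ to a cycle homologous to $a_{\Gamma^c,(1,\dots,r)}^{\{b_\alpha\}}$, so $[a_{\Gamma^c,(1,\dots,r)}^{\{b_\alpha\}}] \in f_*\langle x_1,\dots,x_k\rangle_{\Gamma^c}$.

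The main obstacle is the bookkeeping in the inductive step of the quasi-isomorphism half: one must carry along not just the lifted elements $a_\beta$ but also the homotopies $c_\beta$ relating $f(a_\beta)$ to $b_\beta$, and verify that the discrepancy between $f\big(\sum \zeta(a_{(\zeta_1,\dots)},\dots)\big)$ and $d b_{(\mu,\dots)}$ is \emph{exact} rather than merely a cycle — this is where one expands the multilinear operation $\zeta$ evaluated on sums $f(a_\beta) + (\text{homotopy corrections})$ and collects the terms, invoking the coherence of the lower-weight relations already established. The cleanest way to organize this is probably to phrase the whole induction as the construction of a single auxiliary map (or an explicit homotopy), rather than handling each index in isolation, so that the signs $\gamma$ of Equation \eqref{ecu : Massey cycle} are tracked automatically; since $f$ is degree $0$ and the signs depend only on data intrinsic to $\Gamma^c$ and the degrees $|x_i|$, no sign ever changes under $f$, which is what keeps the argument from becoming unmanageable.
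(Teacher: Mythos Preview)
Your proposal is correct and takes essentially the same approach as the paper's proof: push forward defining systems for the inclusion, and for the quasi-isomorphism case lift a defining system $\{b_\alpha\}$ in $B$ to one $\{a_\alpha\}$ in $A$ by induction on weight, carrying along homotopies $c_\alpha$. The paper makes your parenthetical ``plus lower-order corrections'' fully explicit as a term $Q_{\mu,(i_1,\dots,i_k)}$ (a sum over tuples with one $c$-entry, $f(a)$'s to its left and $b$'s to its right) so that the inductive relation reads $dc_\alpha = f(a_\alpha) - b_\alpha + Q_\alpha$; this correction is indeed essential, since without it $f(a_\alpha) - b_\alpha$ fails to be a cycle already at weight~$2$, exactly the bookkeeping obstacle you flag.
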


\begin{proof}
Assume that the Massey product set $\langle x_1,...,x_k\rangle_{\Gamma^c}$ is defined.
Then, any defining system $\left\{a_{\alpha}\right\}$ for $\langle x_1,...,x_k\rangle_{\Gamma^c}$
produces a defining system $\left\{f(a_{\alpha})\right\}$ for 
$\left\langle f_*\left(x_1\right),...,f_*\left(x_k\right)\right\rangle_{\Gamma^c}$,
 because $f$ commutes with the operadic structure maps and the differentials.
Therefore, if $\langle x_1,...,x_k\rangle_{\Gamma^c}$ is defined, then 
$\left\langle f_*\left(x_1\right),...,f_*\left(x_k\right)\right\rangle_{\Gamma^c}$ is also defined,
and the containment $f_*\langle x_1,...,x_k\rangle_{\Gamma^c} \subseteq \left\langle f_*\left(x_1\right),...,f_*\left(x_k\right)\right\rangle_{\Gamma^c}$
follows.

Next, assume that $f$ is a quasi-isomorphism and let us prove that the corresponding Massey product sets are in bijective correspondence.
Let $\left\{b_{\beta}\right\}$ be a defining system for a 
Massey product $y \in \left\langle f_*\left(x_1\right),...,f_*\left(x_k\right)\right\rangle_{\Gamma^c}$. 
We shall construct, by induction on the weight of the elements of the defining system, 
a defining system $\left\{a_{\beta}\right\}$ for a 
Massey product $x\in\langle x_1,...,x_k\rangle_{\Gamma^c}$ such 
that $f\left(a_{\Gamma^c, (1,\dots, k)}\right)$ is homologous to $b_{\Gamma^c, (1,\dots, k)}$,
and therefore $f_*(x) = y.$ 
	
Let $a_{\id, (i)}$ be any representative for $x_i.$ 
This means that $f(a_{\id, (i)}) -b_{\id, (i)}$ is nullhomologous, 
which means that there is a $c_{\id, (i)} \in B$ such that
\[
dc_{\id, (i)} = f\left(a_{\id, (i)}\right) -b_{\id, (i)}.
\]
The family $\left\{a_{\id, (i)}\right\}$ gives the first inductive step.
Now, suppose that for all indexes $\left(\mu, (i_1,\dots, i_k)\right)\in I\left(\Gamma^c\right)$ with the 
weight of $\mu$ strictly less than $N$, with $1  < N < n$, where $n$ is the weight of $\Gamma^c$,
we have constructed $a_{\mu, (i_1,\dots, i_k)} \in A$ 
and $c_{\mu, (i_1,\dots, i_k)} \in B$ such that
\begin{equation*}
d\left(a_{\mu,\left(i_1, \cdots, i_k\right)}\right) 
    = \sum \zeta \left(a_{\zeta_1,\left(i_{\sigma^{-1}\left(1\right)},\dots i_{\sigma^{-1}\left(v_1\right)}\right)},
    \dots,a_{\zeta_m,\left(i_{\sigma^{-1}\left(v_1+\cdots+ v_{m-1}+1\right)},\dots, i_{\sigma^{-1}(k)}\right)} \right),
\end{equation*}
where $    D\left(\mu\right) = \sum \left(\zeta ; \zeta_1,\dots,\zeta_m; \sigma\right)$, and 
\begin{equation*}
    dc_{\mu, (i_1,\dots, i_k)} = f\left(a_{\mu, (i_1,\dots, i_k)}\right) -b_{\mu, (i_1,\dots, i_k)}+Q_{\mu, (i_1,\dots, i_k)},
\end{equation*}
where $Q_{\mu, (i_1,\dots, i_k)}$ is the sum:
\[
\sum \sum\zeta \left(x_{\zeta_1,\left(i_{\sigma^{-1}\left(1\right)},\dots i_{\sigma^{-1}\left(v_1\right)}\right)},\dots,
x_{\zeta_j,\left(i_{\sigma^{-1}\left(v_1+\cdots+ v_{j-1}+1\right)},\dots, i_{\sigma^{-1}\left(v_1+\cdots+ v_{j-1}+v_j\right)}\right)}
\dots,
x_{\zeta_m,\left(i_{\sigma^{-1}\left(v_1+\cdots+ v_{m-1}+1\right)},\dots, i_{\sigma^{-1}(k)}\right)} \right).
\]
Here, the outer summation is indexed by
$$
D\left(\mu\right) = \sum \left(\zeta ; \zeta_1,\dots,\zeta_m; \sigma\right),
$$ 
and for each term 
$\left(\zeta ; \zeta_1,\dots,\zeta_m; \sigma\right)$, 
the inner sum 
is taken over every possible choice of tuple 
\[
\left( x_{\zeta_1,\left(i_{\sigma^{-1}\left(1\right)},\dots, i_{\sigma^{-1}\left(v_1\right)}\right)},\dots,
 x_{\zeta_j,\left(i_{\sigma^{-1}\left(v_1+\cdots+ v_{j-1}+1\right)},\dots, i_{\sigma^{-1}\left(v_1+\cdots+ v_{j-1}+v_j\right)}\right)}
 \dots,
x_{\zeta_m,\left(i_{\sigma^{-1}\left(v_1+\cdots+ v_{m-1}+1\right)},\dots, i_{\sigma^{-1}(k)}\right)}\right),
\]
where one of the 
\[
x_{\zeta_j,\left(i_{\sigma^{-1}\left(v_1+\cdots+ v_{j-1}+1\right)},\dots, i_{\sigma^{-1}\left(v_1+\cdots+ v_{j-1}+v_j\right)}\right)}
\]
is precisely
\[
c_{\zeta_j,\left(i_{\sigma^{-1}\left(v_1+\cdots+ v_{j-1}+1\right)},\dots, i_{\sigma^{-1}\left(v_1+\cdots+ v_{j-1}+v_j\right)}\right)},
\]
anything to the left of it in the tuple is 
\[
f\left(a_{\zeta_j,\left(i_{\sigma^{-1}\left(v_1+\cdots+ v_{j-1}+1\right)},\dots, i_{\sigma^{-1}\left(v_1+\cdots+ v_{j-1}+v_j\right)}\right)}\right),
\]
and anything to its right is 
\[
b_{\zeta_j,\left(i_{\sigma^{-1}\left(v_1+\cdots+ v_{j-1}+1\right)},\dots, i_{\sigma^{-1}\left(v_1+\cdots+ v_{j-1}+v_j\right)}\right)}.
\]
Now,
let $\left(\mu, (i_1,\dots, i_k)\right)\in I\left(\Gamma^c\right)$ have $\mu$ of weight $N$.
Then, $Q_{\mu, (i_1,\dots, i_k)}$ is well defined,
because the cooperations appearing in its defining tuple have weight strictly less than $\mu$. 
Its boundary is as follows:
\begin{equation}
    \label{ecu : differential of Q'}
    dQ_{\mu, (i_1,\dots, i_k)} = db_{\mu, (i_1,\dots, i_k)} + \sum \zeta \left(f(a_{\zeta_1,(i_{\sigma^{-1}\left(1\right)}),\dots i_{\sigma^{-1}\left(v_1\right)})},\dots,f(a_{\zeta_m,\left(i_{\sigma^{-1}\left(v_1+\cdots+ v_{m-1}+1\right)},\dots, i_{\sigma^{-1}(k)}\right)}) \right).
\end{equation}
Indeed, the sum $dQ_{\mu, (i_1,\dots, i_k)}$ can be separated into two parts: 
a telescoping part that converges to 
the right-hand side of the equation above, 
and a second part that can be divided into subsums each vanishing by arguments similar to the proof of Theorem \ref{Prop: cycle}. 
Now, from Equation (\ref{ecu : differential of Q'}), 
we deduce that the element
\[
\sum \zeta \left(a_{\zeta_1,\left(i_{\sigma^{-1}\left(1\right)},\dots i_{\sigma^{-1}\left(v_1\right)}\right)},\dots,a_{\zeta_m,\left(i_{\sigma^{-1}\left(v_1+\cdots+ v_{m-1}+1\right)},\dots, i_{\sigma^{-1}(k)}\right)} \right)
\]
where the sum ranges over
$D\left(\mu\right) = \sum \left(\zeta ; \zeta_1,\dots,\zeta_m; \sigma\right)$,
is a cycle.
Therefore, there is an element $a'_{\mu, (i_1,\dots, i_k)}\in A$ such that
\[
da'_{\mu, (i_1,\dots, i_k)} =\sum \zeta \left(a_{\zeta_1,\left(i_{\sigma^{-1}\left(1\right)},\dots i_{\sigma^{-1}\left(v_1\right)}\right)},\dots,a_{\zeta_m,\left(i_{\sigma^{-1}\left(v_1+\cdots+ v_{m-1}+1\right)},\dots, i_{\sigma^{-1}(k)}\right)} \right).
\]
Define a  cycle 
\[
e_{\mu, (i_1,\dots, i_k)} =  f\left(a'_{\mu, (i_1,\dots, i_k)}\right) - b_{\mu, (i_1,\dots, i_k)} + Q_{\mu, (i_1,\dots, i_k)}.
\]
Then there is an element 
$e'_{\mu, (i_1,\dots, i_k)}\in A$ such that $f\left(e'_{\mu, (i_1,\dots, i_k)}\right)$ is homologous to $e_{\mu, (i_1,\dots, i_k)}$,
that is, such that 
\[
dc_{\mu, (i_1,\dots, i_k)} = f\left(e'_{\mu, (i_1,\dots, i_k)}\right) - e_{\mu, (i_1,\dots, i_k)},
\]
and 
\[
a_{\mu, (i_1,\dots, i_k)} = a'_{\mu, (i_1,\dots, i_k)} - e'_{\mu, (i_1,\dots, i_k)}.
\]
It follows that
\[
d\left(a_{\mu,\left(i_1, \cdots, i_k\right)}\right) 
= \sum \zeta \left(a_{\zeta_1,\left(i_{\sigma^{-1}\left(1\right)},\dots i_{\sigma^{-1}\left(v_1\right)}\right)},\dots,a_{\zeta_m,\left(i_{\sigma^{-1}\left(v_1+\cdots+ v_{m-1}+1\right)},\dots, i_{\sigma^{-1}(k)}\right)} \right),
\]
where the sum ranges over
$D\left(\mu\right) = \sum \left(\zeta ; \zeta_1,\dots,\zeta_m; \sigma\right)$, 
and furthermore, that 
\[
dc_{\mu, (i_1,\dots, i_k)} = f\left(a_{\mu, (i_1,\dots, i_k)}\right) -b_{\mu, (i_1,\dots, i_k)}+Q_{\mu, (i_1,\dots, i_k)}.
\]
This concludes the induction step. 
To finish, consider the element $Q_{\Gamma^c, (i_1,\dots, i_k)}$.
This is defined by the same logic as above, and its boundary satisfies
\[
dQ_{\Gamma^c, (i_1,\dots, i_k)} = f\left(a_{\Gamma^c, (i_1,\dots, i_k)}\right) - b_{\Gamma^c, (i_1,\dots, i_k)}.
\]
Therefore, the elements $f\left(a_{\Gamma^c, (i_1,\dots, i_k)}\right)$ and $b_{\Gamma^c, (i_1,\dots, i_k)}$
are homologous, as we wanted to prove.
\end{proof}

Recall that two $\mathcal P$-algebras are \emph{weakly-equivalent}, or \emph{quasi-isomorphic},
if there is a zig-zag of $\mathcal P$-algebra quasi-isomorphisms between them. 
From the two previous results, we can deduce the following.

\begin{corollary}
\label{cor:QuasiIsoMasseyProducts}
 There is a bijection between the Massey product sets of weakly-equivalent $\mathcal P$-algebras.
\end{corollary}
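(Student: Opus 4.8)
The plan is to reduce the statement about zig-zags to the single-morphism case already handled by Proposition \ref{prop : Morphisms preserve Massey products}. Recall that a weak equivalence of $\mathcal P$-algebras is, by definition, a zig-zag
\[
A = A_0 \xleftarrow{\ \sim\ } A_1 \xrightarrow{\ \sim\ } A_2 \xleftarrow{\ \sim\ } \cdots \xrightarrow{\ \sim\ } A_N = B
\]
of quasi-isomorphisms of $\mathcal P$-algebras. First I would fix a weight-homogeneous cooperation $\Gamma^c \in \mathcal P^\antishriek(k)^{(n)}$ and homogeneous classes $x_1,\dots,x_k \in H_*(A)$. The claim is that the assignment $(x_1,\dots,x_k) \mapsto$ the $\Gamma^c$-Massey product set is transported bijectively along the zig-zag; more precisely, writing $\varphi \colon H_*(A) \to H_*(B)$ for the composite isomorphism induced by the zig-zag (each arrow in the zig-zag being a quasi-isomorphism, hence inducing an isomorphism on homology), I claim $\varphi$ restricts to a bijection
\[
\langle x_1,\dots,x_k\rangle_{\Gamma^c} \;\xrightarrow{\ \cong\ }\; \langle \varphi(x_1),\dots,\varphi(x_k)\rangle_{\Gamma^c}.
\]

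The argument is a straightforward induction on the length $N$ of the zig-zag. For $N=0$ there is nothing to prove. For the inductive step, it suffices to treat a single quasi-isomorphism $g \colon C \to C'$ (or $C' \to C$), and this is exactly the content of the last sentence of Proposition \ref{prop : Morphisms preserve Massey products}: a quasi-isomorphism of $\mathcal P$-algebras induces a bijection between the corresponding Massey product sets. Composing these bijections along the zig-zag — using that the induced maps on homology compose to $\varphi$, and that the bijection of Massey product sets attached to a composable pair of quasi-isomorphisms is the composite of the individual bijections (immediate from the construction in the proof of Proposition \ref{prop : Morphisms preserve Massey products}, since it is built by transporting defining systems, which is functorial) — yields the desired bijection for the whole zig-zag. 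In particular $\langle x_1,\dots,x_k\rangle_{\Gamma^c}$ is defined (nonempty) if and only if $\langle \varphi(x_1),\dots,\varphi(x_k)\rangle_{\Gamma^c}$ is, and the two sets have the same cardinality.

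The only genuinely nontrivial input is Proposition \ref{prop : Morphisms preserve Massey products} itself, which is already proved; everything in this corollary is bookkeeping on top of it. The one point requiring a word of care is functoriality of the bijection under composition of quasi-isomorphisms: the bijection produced in the proof of Proposition \ref{prop : Morphisms preserve Massey products} is not literally $f_*$ restricted (since $f_*$ need not be injective or surjective on the level of Massey product sets a priori — though it is, as a consequence), but is constructed by lifting a defining system on the target to a defining system on the source. I would remark that both the forward direction (apply $f$ to a defining system) and this lifting are compatible with composition, so that the composite of the lifting bijections for $A_1 \to A_0$, $A_1 \to A_2$, etc., agrees with the bijection one would obtain by running the lifting argument for the whole zig-zag at once. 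This observation is elementary and I do not expect it to be an obstacle; it simply needs to be stated so the composition is unambiguous.
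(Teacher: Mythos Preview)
Your proposal is correct and matches the paper's approach exactly: the paper states this corollary with no proof, as an immediate consequence of Proposition~\ref{prop : Morphisms preserve Massey products}, and your reduction to composing single-step bijections along the zig-zag is precisely the intended argument. Your caveat in the last paragraph is unnecessary, however: the bijection produced by Proposition~\ref{prop : Morphisms preserve Massey products} \emph{is} literally $f_*$ restricted to the Massey product set (the lifting of defining systems establishes surjectivity of $f_*$, while injectivity is automatic since $f_*$ is an isomorphism on all of $H_*$), so functoriality under composition is immediate.
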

\begin{proof}
    Suppose one has a zig-zag of quasi-isomorphisms of $\mathcal P$-algebras
    $$
    \begin{tikzcd}
        A \rar{f}  &B  & C \arrow[l,swap,  "g"].
    \end{tikzcd}
    $$
    By Proposition \ref{prop : Morphisms preserve Massey products},
    there is a bijection between a Massey product set $\langle x_1,...,x_k\rangle_{\Gamma^c}$ in $A$ 
    and the corresponding Massey product set $\left\langle f_*\left(x_1\right),...,f_*\left(x_k\right)\right\rangle_{\Gamma^c}$ at $B$.
    Since $g$ is a quasi-isomorphism,
    there exists $y_i$ such that $g_*(y_i) = f_\ast(x_i)$. 
    Therefore, 
    a second application of Proposition \ref{prop : Morphisms preserve Massey products} yields that
    the Massey product set $\left\langle f_*\left(y_1\right),...,f_*\left(y_k\right)\right\rangle_{\Gamma^c}$ 
    is in bijection with $\langle x_1,...,x_k\rangle_{\Gamma^c}.$ 
\end{proof}

Recall that, for an operad $\mathcal P$ without differential, 
the homology of a $\mathcal P$-algebra is also a $\mathcal P$-algebra.  
We say that a $\mathcal P$-algebra is \emph{formal} if it is weakly equivalent to its homology 
endowed with the induced $\mathcal P$-algebra structure (and trivial differential). 
In general, the Massey products of $\mathcal P$-algebras with trivial differential 
are always trivial because they have no relations that exist at the chain level but not at the homological level. 
In particular, the Massey products of the homology of a $\mathcal P$-algebra are all trivial whenever they are defined.
From this, we immediately deduce the following result.

\begin{corollary}
If a $\mathcal P$-algebra has a nontrivial Massey product, then it is not formal. 
\end{corollary}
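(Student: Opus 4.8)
The statement to prove is: if a $\mathcal P$-algebra $A$ has a nontrivial Massey product, then $A$ is not formal. This is an immediate consequence of the two results stated just before it, so the plan is to assemble them. First I would recall what "nontrivial" means here: there exist homogeneous classes $x_1,\dots,x_k\in H_*(A)$ and a weight-homogeneous cooperation $\Gamma^c\in\mathcal P^\antishriek(k)^{(n)}$ such that the Massey product set $\langle x_1,\dots,x_k\rangle_{\Gamma^c}$ is defined (non-empty) and does not contain the zero class. I would then argue by contradiction: suppose $A$ were formal, i.e.\ weakly equivalent to its homology $H_*(A)$ equipped with the induced strict $\mathcal P$-algebra structure and zero differential.

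The first key step is the observation, recorded in the paragraph preceding the corollary, that a $\mathcal P$-algebra with trivial differential has only trivial Massey products: every element of a defining system associated to $x_1,\dots,x_r$ can be taken to be a (zero-differential) cycle, and the inductive defining-system relations of Definition~\ref{def: Defining systems for operadic Massey products} then force the relevant combinations to vanish, so that whenever $\langle x_1,\dots,x_r\rangle_{\Gamma^c}$ is defined on such an algebra it contains $0$. Applying this to $H_*(A)$ with its induced structure: for the classes $x_1,\dots,x_k$ above, if $\langle x_1,\dots,x_k\rangle_{\Gamma^c}$ is defined in $H_*(A)$ it is trivial.

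The second key step is Corollary~\ref{cor:QuasiIsoMasseyProducts}, which gives a bijection between the Massey product sets of weakly equivalent $\mathcal P$-algebras (itself a consequence of Proposition~\ref{prop : Morphisms preserve Massey products} applied along each leg of the zig-zag). Under this bijection the set $\langle x_1,\dots,x_k\rangle_{\Gamma^c}$ for $A$ corresponds to $\langle y_1,\dots,y_k\rangle_{\Gamma^c}$ for $H_*(A)$, where $y_i$ is the image of $x_i$ under the induced homology isomorphism; in particular the latter is defined, hence by the first step it contains the zero class, hence so does the former. This contradicts nontriviality, completing the proof.

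**Expected main obstacle.** Everything of substance has been done upstream: the real content is the bijection of Massey product sets along quasi-isomorphisms (Proposition~\ref{prop : Morphisms preserve Massey products} and Corollary~\ref{cor:QuasiIsoMasseyProducts}) and the remark that zero-differential algebras have trivial Massey products. So for this corollary there is essentially no obstacle; the only point requiring a sentence of care is checking that the bijection of Massey product sets respects the \emph{presence of the zero class} — i.e.\ that it is realized by $f_*$ for the quasi-isomorphisms in the zig-zag, which are homology isomorphisms sending $0$ to $0$ — so that "trivial" is a property transported by the bijection. This is immediate from the construction in Proposition~\ref{prop : Morphisms preserve Massey products}, where the bijection is shown to be compatible with $f_*$.
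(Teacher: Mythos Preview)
Your proposal is correct and follows essentially the same argument as the paper: both use that zero-differential $\mathcal P$-algebras have only trivial Massey products, together with Corollary~\ref{cor:QuasiIsoMasseyProducts}, to derive a contradiction with formality. Your added remark about the bijection being induced by $f_*$ (and hence preserving the zero class) is a reasonable clarification, but the paper simply takes this as implicit.
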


\begin{proof}
Assume that a $\mathcal P$-algebra $A$ has a nontrivial Massey product. 
Since the homology of $A$ has a zero differential, 
it must be that all of its Massey products are trivial. 
Therefore, by Corollary \ref{cor:QuasiIsoMasseyProducts} it cannot be quasi-isomorphic to $A.$
\end{proof}

Next,
we collect some elementary properties satisfied by the operadic Massey products.
These are similar to some of those explained in \cite{kraines72} and, more recently, 
in \cite[p. 325]{Ravenel}.
The proofs follow from the definitions, and are left to the reader.

 \begin{proposition}
 \label{prop : Elementary properties of the operadic Massey products}
 Let $A$ be a $\mathcal P$-algebra, $\Gamma^c\in \left(\mathcal{P}^{\antishriek}\right)^{(n)} (r)$,
 and $x_1, \dots, x_r \in H_*(A)$ be homogeneous elements such that $\left\langle x_1,...,x_r\right\rangle_{\Gamma^c}$ is defined.
 Then the following assertions hold.
     \begin{enumerate}
         \item 
         (Homological linearity) 
         If $k\in \Bbbk$ is a scalar, then for all $1 \leq i \leq r$,
         \[
         k\langle x_1,\dots,x_r\rangle_{\Gamma^c} \subseteq \langle x_1,\dots, kx_i, \dots ,x_r\rangle_{\Gamma^c}.
         \]
         \item (Equivariance)
         For every permutation $\sigma\in \mathbb S_r$, there is a bijection 
         \[
         \langle x_1, \dots, x_r\rangle_{\Gamma^c_{n}\cdot\sigma} 
         =(-1)^{\varepsilon({\sigma^{-1})}}\langle x_{\sigma^{-1}(1)}, \dots x_{\sigma^{-1}(r)}\rangle_{\Gamma^c_{n}},
         \] 
         where $(-1)^{\varepsilon({\sigma^{-1}})}$ is the Koszul sign appearing by permuting the variables according to $\sigma^{-1}$.
     \end{enumerate}
 \end{proposition}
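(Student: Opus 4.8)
The plan is to prove each of the three items by exhibiting explicit defining systems and tracking how the Massey inductive map $D$, the indexing set $I(-)$, and the sign of Equation~\eqref{ecu : Massey cycle} behave under the operation in question. For item~(1), I would start from a $\Gamma^c$-defining system $\{a_\alpha\}_{\alpha\in I(\Gamma^c)}$ for $x_1,\dots,x_r$ representing a class $z$, and rescale it: set $a'_{(\mu,(k_1,\dots,k_j))}:=k\,a_{(\mu,(k_1,\dots,k_j))}$ whenever the position $i$ occurs among $k_1,\dots,k_j$, and $a'_\alpha:=a_\alpha$ otherwise. The base case is immediate, and for the inductive relations of Definition~\ref{def: Defining systems for operadic Massey products} one uses that the index tuples of the children of $(\mu,(k_1,\dots,k_j))$ under $D$ partition $\{k_1,\dots,k_j\}$: exactly one of the factors in $\sum\zeta(a_{\zeta_1,\dots},\dots,a_{\zeta_m,\dots})$ then carries the position $i$, so the whole term is rescaled by $k$, matching $d(a'_{(\mu,\dots)})=k\,d(a_{(\mu,\dots)})$; for tuples not containing $i$ nothing changes. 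Since $|kx_i|=|x_i|$ the sign is unaffected, so the resulting cycle is $k$ times the original and represents $kz$, and in particular $\langle x_1,\dots,kx_i,\dots,x_r\rangle_{\Gamma^c}$ is defined. I expect this item to be entirely routine.

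For item~(2), assuming both sets on the left are non-empty, I would pick $z\in\langle x_1,\dots,x_r\rangle_{\Gamma_1^c}$ and $w\in\langle x_1,\dots,x_r\rangle_{\Gamma_2^c}$ and, using Proposition~\ref{prop : Morphisms preserve Massey products} applied to $\id_A$, realize them by defining systems $\{a^{(1)}_\alpha\}$, $\{a^{(2)}_\alpha\}$ sharing a common choice of cycle representatives of the $x_i$. Linearity of $D$ gives $D(\Gamma_1^c+\Gamma_2^c)=D(\Gamma_1^c)+D(\Gamma_2^c)$, from which a downward induction on weight yields $I(\Gamma_1^c+\Gamma_2^c)\subseteq I(\Gamma_1^c)\cup I(\Gamma_2^c)$, the overlap $I(\Gamma_1^c)\cap I(\Gamma_2^c)$ being closed under passing to children under $D$. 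The idea is then to assemble a defining system for $\Gamma_1^c+\Gamma_2^c$ by using $\{a^{(1)}_\alpha\}$ on indices in $I(\Gamma_1^c)$ and $\{a^{(2)}_\alpha\}$ on the rest; after that, the associated cycle decomposes along $D(\Gamma_1^c+\Gamma_2^c)=D(\Gamma_1^c)+D(\Gamma_2^c)$ — cancelling terms in this sum being exactly those not forced into $I(\Gamma_1^c+\Gamma_2^c)$ — into $a^{(1)}_{\Gamma_1^c,(1,\dots,r)}+a^{(2)}_{\Gamma_2^c,(1,\dots,r)}$, whose class is $z+w$, and Proposition~\ref{Prop: cycle} (or rather the argument proving it) guarantees this is a cycle. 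The main obstacle is making the two defining systems compatible on the overlap $I(\Gamma_1^c)\cap I(\Gamma_2^c)$ so that the merged collection actually satisfies the inductive relations; this should be carried out by the same downward-inductive, $Q$-term correction used in the proof of Proposition~\ref{prop : Morphisms preserve Massey products}, and it is the only step that is not bookkeeping.

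For item~(3), I would use that $D$ is a morphism of $\mathbb{S}$-modules, being the composite of the equivariant half-reduced decomposition map $\Delta^+$ with $\kappa\circ\operatorname{id}$; hence $D(\Gamma_n^c\cdot\sigma)=D(\Gamma_n^c)\cdot\sigma$, and unwinding Definition~\ref{def : indexing sets} shows that $I(\Gamma_n^c\cdot\sigma)$ is obtained from $I(\Gamma_n^c)$ by relabelling each index tuple through $\sigma$. It follows that a $\Gamma_n^c\cdot\sigma$-defining system for $x_1,\dots,x_r$ becomes, after this relabelling of indices, exactly a $\Gamma_n^c$-defining system for $x_{\sigma^{-1}(1)},\dots,x_{\sigma^{-1}(r)}$, and the correspondence is a bijection. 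Comparing the cycles of Equation~\eqref{ecu : Massey cycle} attached to matching defining systems, the operadic parts agree term by term while the sign parts differ precisely by the change in $\alpha=\sum_{i<j,\ \sigma(i)>\sigma(j)}|x_i||x_j|$, which is the Koszul sign $(-1)^{\varepsilon(\sigma^{-1})}$ of reordering $x_1,\dots,x_r$ into $x_{\sigma^{-1}(1)},\dots,x_{\sigma^{-1}(r)}$; this gives the claimed equality of Massey product sets. Here the only real care is in the sign bookkeeping.
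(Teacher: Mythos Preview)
The paper does not actually prove this proposition: the sentence preceding it reads ``The proofs follow from the definitions, and are left to the reader.'' Your proposal therefore supplies details the authors chose to omit, and for items~(1) and~(3) your arguments are correct and exactly the kind of bookkeeping the authors presumably had in mind: the rescaling in~(1) works because the children of an index under $D$ partition its tuple of positions, and the equivariance in~(3) is a direct consequence of $D$ being a map of $\mathbb{S}$-modules, with the Koszul sign emerging from the $\alpha$-term in the sign formula as you say.

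For item~(2) you have correctly isolated the only non-formal point. The paper's assertion that this ``follows from the definitions'' glosses over precisely the compatibility issue you raise: two independently chosen defining systems for $\Gamma_1^c$ and $\Gamma_2^c$ need not agree on $I(\Gamma_1^c)\cap I(\Gamma_2^c)$, and a naive splice can fail the inductive relations of Definition~\ref{def: Defining systems for operadic Massey products}. Your proposed remedy---to adjust one system inductively on the overlap using the $Q$-correction argument from the proof of Proposition~\ref{prop : Morphisms preserve Massey products}---is in the right spirit, but note that in that proof the correction terms are available because $f$ is a quasi-isomorphism and one can always lift cycles; here the analogous step requires that the difference $a^{(1)}_\alpha-a^{(2)}_\alpha$ on the overlap be absorbable into the higher-weight choices without changing the homology classes $z$ and $w$, which is plausible but deserves to be written out rather than asserted. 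This is a genuine detail the paper suppresses, and your identification of it is the most substantive part of your write-up.
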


 \begin{proof}
        \begin{enumerate}
         \item 
         Let $\{a_\alpha\}$ be a defining system for a Massey product  $x \in \langle x_1,\dots,x_r\rangle_{\Gamma^c}$. Consider a new defining system
         $$
         b_{(\zeta, j_1, \dots j_s)} = \begin{cases}
             ka_{(\zeta, j_1, \dots j_s)} & \mbox{if} j_l = i \mbox{for some } l.
             \\
             a_{(\zeta, j_1, \dots j_s)} & \mbox{otherwise.}
         \end{cases}
         $$
         In particular, one has 
         $$
         b_{(\id, j)} = \begin{cases}
             ka_{(\id, i)} &\mbox{for } i = j.
             \\
             a_{(\id, j)} &\mbox{otherwise.}
         \end{cases}
         $$
         so $\{b_\alpha\}$ is a defining system for a Massey product in  $\langle x_1,\dots, kx_i, \dots ,x_r\rangle_{\Gamma^c}.$  Furthermore, $b_{(\Gamma^c, 1, 2,\dots, r)} = ka_{(\Gamma^c, 1, 2,\dots, r)}$ so the corresponding Massey product is $kx$.
         \item 
         Let $\{a_\alpha\}$ be a defining system for a Massey product  $x \in \langle x_1,\dots,x_r\rangle_{\Gamma^c}$. Consider a new defining system
         $$
         b_{(\zeta, j_1, \dots j_s)} 
             := a_{(\zeta, \sigma^{-1}(j_1), \dots \sigma^{-1}(j_s)}.
         $$
         This is then a defining system for $(-1)^{\varepsilon({\sigma^{-1})}}\langle x_{\sigma^{-1}(1)}, \dots x_{\sigma^{-1}(r)}\rangle_{\Gamma^c_{n}}$ and the result follows.
         \end{enumerate}
 \end{proof}

\subsection{Massey products along morphisms of operads and formality}
\label{sec : Massey products along morphisms of operads and formality}

In this section, we shall discuss pullbacks and pushforwards of Massey products along morphisms of operads
and give some applications to formality.

\medskip

Before we begin, it will be helpful to remark some observations.   
Let $f:\mathcal P \to \mathcal Q$ be a morphism of weighted operads.
In this case,
taking Koszul dual cooperad is functorial, 
and therefore there is an induced map 
$f^{\antishriek}:\mathcal F(E,R) = \mathcal P^{\antishriek} \to \mathcal Q^{\antishriek} = \mathcal F(F,S).$ 
Moreover,  there is a commutative diagram 
$$
\begin{tikzcd}[row sep = large, column sep = huge]
\mathcal P^{\antishriek} \arrow[r, "\Delta^+"] \arrow[d, "f^{\antishriek}"] & \mathcal P^{\antishriek}\circ \mathcal P^{\antishriek} \arrow[d, "f^{\antishriek}\circ f^{\antishriek}"] \arrow[r, "\ \kappa\circ\operatorname{id} \ "] &\mathcal P\circ \mathcal P^{\antishriek} \arrow[d, "f\circ f^{\antishriek}"]
\\
\mathcal Q^{\antishriek}\arrow[r, "\Delta^+"] & \mathcal Q^{\antishriek}\circ \mathcal Q^{\antishriek} \arrow[r, "\ \kappa'\circ\operatorname{id}\ "] &\mathcal Q\circ \mathcal Q^{\antishriek}
\end{tikzcd}
$$
From this, we conclude that the Massey inductive map $D$ commutes with $f^{\antishriek}.$ 
Secondly, because the category of graded vector spaces admits finite colimits, 
on the level of algebras, $f$ descends to an adjoint pair
$$
f_! : \mathcal P-\mathsf{Alg} \leftrightarrows \mathcal Q-\mathsf{Alg} : f^*.
$$
The functor $f^\ast$ preserves the underlying chain complex of the $\mathcal Q$-algebras,
and therefore there is a chain map $f^\ast (A) \to A$ which is just the identity morphism.
We define next another chain map $h : A \to f_!(A)$. 
Given a $\mathcal P$-algebra $A$, 
the unit of the adjunction above is a morphism of $\mathcal P$-algebras
$$
A\to f^\ast f_!(A).
$$
Forgetting the $\mathcal P$-algebra structure and recalling that $f^\ast$ preserves the underlying chain complex, 
there is a chain map
$$
h: A\to f_!(A).
$$

\noindent{\bf{Pullbacks of Massey products. }} 
For any $\mathcal Q$-algebra $B$, 
the $\mathcal P$-Massey products on $f^{\ast}(B)$ induce $\mathcal Q$-Massey products on $B$. 
Since the underlying chain complex of both algebras is the same, 
we can prove the following result.

\begin{proposition}
\label{Prop: Pullback of Massey products}
    Let $f:\mathcal P \to \mathcal Q$ be a morphism of weighted operads, 
    $B$ a $\mathcal Q$-algebra, and  $\Gamma^c\in \left(\mathcal P^{\antishriek} \right)^{(n)} (r)$.
    Suppose that  $x_1,...,x_r \in H_*\left(f^*\left(B\right)\right)$ are homogeneous elements such that the
    Massey product set $\langle x_1,...,x_r\rangle_{\Gamma^c}$ is defined.
    If $\mathcal P$ is finite type arity-wise and $f^\antishriek(\Gamma^c)\neq 0$, 
    then under the  identification of $f^\ast(B)$ and $B$ as chain complexes, 
    we have
    $$\langle x_1,...,x_r\rangle_{\Gamma^c} \subseteq \langle x_1,...,x_r\rangle_{f^{\antishriek}(\Gamma^c)}.$$
    If $f^\antishriek$ is injective, this is an equality.
\end{proposition}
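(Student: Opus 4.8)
The plan is to push a $\Gamma^c$-defining system for $x_1,\dots,x_r$ in the $\mathcal P$-algebra $f^\ast(B)$ forward along the induced cooperad map $f^\antishriek:\mathcal P^\antishriek\to\mathcal Q^\antishriek$ to an $f^\antishriek(\Gamma^c)$-defining system for $x_1,\dots,x_r$ in the $\mathcal Q$-algebra $B$, and then to check that the two associated Massey cycles represent the same class in $H_\ast(B)=H_\ast(f^\ast(B))$. Since $f^\ast(B)$ and $B$ have the same underlying chain complex (as recalled before the statement) and both expressions are genuine cycles by Proposition \ref{Prop: cycle}, this yields the containment $\langle x_1,\dots,x_r\rangle_{\Gamma^c}\subseteq\langle x_1,\dots,x_r\rangle_{f^\antishriek(\Gamma^c)}$.

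Two inputs drive the argument. First, the $\mathcal P$-algebra structure maps of $f^\ast(B)$ are $\gamma_{f^\ast(B)}(\theta;-)=\gamma_B(f(\theta);-)$ for $\theta\in\mathcal P$. Second, the Massey inductive maps are intertwined by $f^\antishriek$, i.e., $D_{\mathcal Q}\circ f^\antishriek=(f\circ f^\antishriek)\circ D_{\mathcal P}$; this is exactly the outer rectangle of the commutative diagram displayed just before the statement, and it says that if $D_{\mathcal P}(\mu)=\sum(\zeta;\zeta_1,\dots,\zeta_m;\sigma)$ then $D_{\mathcal Q}(f^\antishriek(\mu))=\sum(f(\zeta);f^\antishriek(\zeta_1),\dots,f^\antishriek(\zeta_m);\sigma)$, with the convention that a summand in which some $f^\antishriek(\zeta_i)$ vanishes drops out. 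The arity-wise finite type of $\mathcal P$ is what makes $f^\antishriek$ available on the level of quadratic presentations and weight-preserving, so that $f^\antishriek(\Gamma^c)\in\mathcal Q^\antishriek(r)^{(n)}$ is again weight-homogeneous and $\langle x_1,\dots,x_r\rangle_{f^\antishriek(\Gamma^c)}$ is a meaningful object; the hypothesis $f^\antishriek(\Gamma^c)\neq0$ is needed for the construction below to get off the ground.

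Concretely, I would first compare the indexing sets: using the intertwining relation and the recursive Definition \ref{def : indexing sets}, a downward induction on weight shows that $I(f^\antishriek(\Gamma^c))$ is obtained from $I(\Gamma^c)$ by applying $f^\antishriek$ to the cooperation in each index and discarding those indices whose cooperation goes to $0$, the preimages being chosen coherently with the recursion that generates $I(f^\antishriek(\Gamma^c))$ as the smallest set of the prescribed form. Then, given a $\Gamma^c$-defining system $\{a_\alpha\}_{\alpha\in I(\Gamma^c)}$, set $b_{(f^\antishriek(\mu),(\vec k))}:=a_{(\mu,(\vec k))}$ for the chosen preimage $(\mu,(\vec k))$. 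Condition (1) of Definition \ref{def: Defining systems for operadic Massey products} holds because $f^\antishriek$ fixes the identity cooperation, and condition (2) is obtained by applying $\gamma_B(f(-);-)=\gamma_{f^\ast(B)}(-;-)$ to condition (2) for $a_{(\mu,(\vec k))}$ together with the intertwining relation. Finally, substituting $D_{\mathcal Q}(f^\antishriek(\Gamma^c))=(f\circ f^\antishriek)(D_{\mathcal P}(\Gamma^c))$ into Formula \eqref{ecu : Massey cycle} and again rewriting $\gamma_B(f(\zeta);-)$ as $\gamma_{f^\ast(B)}(\zeta;-)$ identifies the Massey cycle of $\{b_\beta\}$ with that of $\{a_\alpha\}$, up to the contribution of the summands killed by $f^\antishriek$; the signs agree because $f^\antishriek$ preserves internal degree and weight, whence $|\zeta_i|-\operatorname{wgt}(\zeta_i)=|f^\antishriek(\zeta_i)|-\operatorname{wgt}(f^\antishriek(\zeta_i))$, while the Koszul sign $\alpha$ depends only on $\sigma$ and the degrees $|x_i|$.

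I expect the main obstacle to be controlling the kernel of $f^\antishriek$. The substitution $b_{(f^\antishriek(\mu),(\vec k))}:=a_{(\mu,(\vec k))}$ must be independent of the choice of preimage when two distinct indices of $I(\Gamma^c)$ have the same image, and the defining-system relations must still close up after the summands with $f^\antishriek(\zeta_i)=0$ have been dropped; one checks, arguing as in the proof of Proposition \ref{Prop: cycle}, that these discarded summands organize into an expression that is zero or a boundary, so the two Massey cycles remain homologous even if not literally equal. The remaining verifications — the coherence of the preimage choices and the sign bookkeeping across the identification $f^\ast(B)=B$ — are routine.
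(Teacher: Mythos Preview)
Your proposal is correct and follows essentially the same strategy as the paper: push the $\Gamma^c$-defining system forward along $f^\antishriek$ using the intertwining relation $D_{\mathcal Q}\circ f^\antishriek=(f\circ f^\antishriek)\circ D_{\mathcal P}$ from the diagram preceding the statement, then handle the kernel and the failure of injectivity of $f^\antishriek$. The paper packages the non-injective case via an explicit linear section of $f^\antishriek$ on its image and a matrix formula for the new defining system rather than your ``coherent preimage choices,'' and it observes that summands with $f^\antishriek(\zeta_i)=0$ vanish outright (so the two Massey cycles are literally equal, not merely homologous), but these are cosmetic differences.
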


\begin{proof}
Let $x\in\langle x_1,...,x_r\rangle_{\Gamma^c}$ have a
defining system $\left\{b_{\mu, (i_1, \dots, i_r)}\right\}$. 
We shall prove the statement by constructing a defining system for $x$ as a $f^{\antishriek}(\Gamma^c)$-Massey product.
If $f^\antishriek$ is injective, then the statement is trivial.
Indeed, since $D$ commutes with $f^{\antishriek}$,
we may obtain the desired defining set  $\left\{b_{f^{\antishriek}(\mu), (i_1, \dots, i_r))}\right\}$ 
by setting $b_{f^{\antishriek}(\mu), (i_1, \dots, i_r))} :=b_{\mu, (i_1, \dots, i_r))}$. 
The converse is also true; each defining set $\left\{b_{f^{\antishriek}(\mu), (i_1, \dots, i_r))}\right\}$ 
is a defining set for a $\Gamma^c$-Massey product.

If $f^{\antishriek}$ is not injective, 
then the set before may fail to be a defining system. 
Two problems may arise. 
Firstly, $f^{\antishriek}(\mu)$ may be zero. 
In this case, however, any term coming from $D$ 
in which $f^{\antishriek}(\mu)$ plays a role will also vanish,
so we may safely remove any term of the form $b_{f^{\antishriek}(\mu), (i_1, \dots, i_r))}$ from the defining system altogether.
Secondly, $f^{\antishriek}$ may fail to preserve linear independence.  
We circumvent this problem as follows. 
To fix notation, write $\mathcal P = \mathcal F(E,R)$ and $\mathcal Q = \mathcal F(F,S)$.
The map $f^\antishriek$ is a map of weighted quadratic cooperads and, in particular, 
it sends cogenerators to cogenerators,
$\left(f^\antishriek\right)^{(1)}:E\to F$. 
We shall assume that $\Bbbk$-linear bases of $E$ and $F$ are chosen such that the image of the basis elements of 
$E$ are precisely the first $m$ basis vectors $\left\{ u_i\right\}$ of $F$,
and further
that the other basis elements of $F$ are not in the image of $\left(f^\antishriek\right)^{(1)}$,
and that the rest of the elements of the basis of $E$ are in the kernel of $\left(f^\antishriek\right)^{(1)}$. 
These bases now, as explained in the second paragraph of Section \ref{sec : higher-order operadic Massey products},
extend to bases of the operads $\mathcal P$ and $\mathcal Q$ using appropriate symmetric tree monomials. 

The image of $f^{\antishriek}$ now entirely lies in the span of tree monomials labeled by the first $m$ basis elements of $F$. 
This means that there is now a canonical (with respect to this choice of basis) linear section $s$ of $f^{\antishriek}$ 
defined only on this codomain that preserves the cocomposition. 
This section is given by sending sums of tree monomials labeled by the first $m$ basis 
elements of $F$ to sums of tree monomials of the same shape labeled by the corresponding first $m$ basis elements of $E$. 

The section $s$ induces a bijection between the indexing sets $I\left(f^{\antishriek}(\Gamma^c),\left(1,...,r\right)\right)$ and $I\left(\Gamma^c,\left(1,...,r\right)\right)$.
Define $b_{\mu , (i_1, \dots i_n)}$ to be $b_{s(\mu), (i_1, \dots i_n)}$. 
This provides a defining system for $x$.
\end{proof}

\begin{remark}
This means that if $f^\antishriek$ is injective and $f^*(B)$ has nontrivial Massey products, then so does $B$.
\end{remark}
\begin{example}

Consider the natural weighted operad morphism $f:\mathsf{Lie}\to \mathsf{Ass}.$ 
For any differential graded associative algebra $A$, 
the differential graded Lie algebra $f^{\ast}(A)$ is the chain complex $A$ equipped 
with the bracket $[a, b]= ab-(-1)^{|a||b|}ba$ for all homogeneous $a,b\in A.$ 
Now, recall from Example \ref{example: Lie--Massey brackets} that $\mathsf{Lie}^{\antishriek}(n)^{(n-1)}$ is generated 
by an element denoted $\tau_n^c$. 
Since on the level of Koszul dual cooperads,
the map  $f^{\antishriek}:\mathsf{Lie}^{\antishriek}\to \mathsf{Ass}^{\antishriek}$ is the linear dual
of the canonical operad map $\mathsf{Ass}\to \mathsf{Com}$,
one can verify that $f^{\antishriek}(\tau_n^c) = \sum_{\sigma \in \mathbb{S}_n} \mu^c_n \cdot \sigma$, where
$\mu^c_n$ is the generator of $\left(\mathsf{Ass}^{\antishriek}\right)^{(n-1)}(n)$ as an $\mathbb{S}_n$-module. This map is injective and therefore, it follows from  Proposition \ref{Prop: Pullback of Massey products} that
\[
\langle x_1,...,x_n\rangle_{\tau_n^c} = \langle x_1,...,x_n\rangle_{\sum_{\sigma \in \mathbb{S}_n} \mu^c_n \cdot \sigma}.
\]
This can be used in two ways. 
Firstly, we can deduce that if $f^\ast(A)$ admits a nontrivial Lie--Massey bracket, 
then $A$ admits a (nonclassical) associative bracket and so is not formal.
In general, on the other hand,  most of the time if $A$ has a nontrivial (classical) Massey product, 
we cannot deduce the existence of a Massey product on $f^\ast(A)$ or its formality.
However, if $A$ admits a Massey product of the form $\langle x,x,...,x\rangle_{\mu^c_n}$, 
referred to in the literature as \emph{Kraine's $\langle x \rangle^n$ product}, or \emph{iterated} Massey product,
then it follows that it admits a product of the 
form $\langle x,...,x\rangle_{\sum_{\sigma \in \mathbb{S}_n} \mu^c_n \cdot \sigma}$,
and so we can deduce that $f^\ast(A)$ is not formal.
\hfill$\square$
\end{example}

\begin{example}
\label{Example: Commutative Massey Products}
We can use Prop \ref{Prop: Pullback of Massey products} to compute the Massey products for the commutative operad $\mathsf{Com}.$
Consider the canonical weighted operad map $f:\mathsf{Ass}\to \mathsf{Com}.$ 
As mentioned in the example before, 
the map  $f^{\antishriek}:\mathsf{Ass}^{\antishriek}\to \mathsf{Com}^{\antishriek}$ 
is the linear dual of the natural operad morphism $g:\mathsf{Lie}\to \mathsf{Ass}$. 
This last operad map is an embedding, 
so it follows that $f^{\antishriek}$ is surjective.  
Thus, for any $\tau \in \left(\mathsf{Com}^{\antishriek}\right)^{(n)}(r)$ there
exists $\mu \in \left(\mathsf{Ass}^{\antishriek}\right)^{(n)}(r)$ such that $f^{\antishriek}(\mu) = \tau$,
and it  follows from Proposition \ref{Prop: Pullback of Massey products} that 
$$
\langle x_1,...,x_k\rangle_{\tau} \subseteq  \langle x_1,...,x_k\rangle_{\mu},
$$
whenever the products above make sense.
\hfill$\square$
\end{example}

\medskip

\noindent{\bf{Pushforwards of Massey products. }} 
For any $\mathcal P$-algebra $A$, 
the $\mathcal P$-Massey products on $A$ induce $\mathcal Q$-Massey products on $f_!(A)$.

\begin{proposition}
Let $f:\mathcal P \to \mathcal Q$ be a morphism of weighted operads,
 $A$ a $\mathcal P$-algebra, and $\Gamma^c\in \left(\mathcal P^{\antishriek} \right)^{(n)} (r)$.
Suppose that  $x_1,...,x_r \in H_*(A)$ are homogeneous elements such that
the Massey product set $\langle x_1,...,x_r\rangle_{\Gamma^c}$ is defined. 
Then, the $\mathcal Q$-Massey product set 
$\langle h_\ast(x_1),...,h_\ast(x_r)\rangle_{f^{\antishriek}(\Gamma^c)}$ is also defined, 
and 
$$h_\ast\langle x_1,...,x_r\rangle_{\Gamma^c} \subseteq \langle h_\ast(x_1),...,h_\ast(x_r)\rangle_{f^{\antishriek}(\Gamma^c)}.$$
\end{proposition}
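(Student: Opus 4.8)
The plan is to transport a defining system forward along the chain map $h\colon A\to f_!(A)$ and reindex it along $f^{\antishriek}$, in direct analogy with the pullback statement of Proposition~\ref{Prop: Pullback of Massey products}. Two structural inputs are used throughout. First, $h$ is not merely a chain map: since by construction it is the adjunction unit $A\to f^{\ast}f_!(A)$ followed by the identification of underlying complexes, and the unit is a morphism of $\mathcal P$-algebras, $h$ intertwines the $\mathcal P$-structure on $A$ with the restriction along $f$ of the $\mathcal Q$-structure on $f_!(A)$; that is, $h\bigl(\gamma_A(\mu;a_1,\dots,a_m)\bigr)=\gamma_{f_!(A)}\bigl(f(\mu);h(a_1),\dots,h(a_m)\bigr)$ for all $\mu\in\mathcal P(m)$ and $a_i\in A$. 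Second, the Massey inductive map commutes with $f^{\antishriek}$, i.e. $D\circ f^{\antishriek}=(f\circ f^{\antishriek})\circ D$, as recorded just before the pullback discussion; consequently $I\bigl(f^{\antishriek}(\Gamma^c)\bigr)$ is obtained from $I(\Gamma^c)$ by applying $f^{\antishriek}$ to the cooperations and leaving the index tuples unchanged.

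First I would fix a $\Gamma^c$-defining system $\{a_{(\mu,(k_1,\dots,k_i))}\}_{\alpha\in I(\Gamma^c)}$ and an element $x$ of $\langle x_1,\dots,x_r\rangle_{\Gamma^c}$ it produces. When $f^{\antishriek}$ is injective on the cooperations that occur, I would simply put $b_{(f^{\antishriek}(\mu),(k_1,\dots,k_i))}:=h\bigl(a_{(\mu,(k_1,\dots,k_i))}\bigr)$. In general --- exactly as in the proof of Proposition~\ref{Prop: Pullback of Massey products} --- I would discard every index whose cooperation lies in $\ker f^{\antishriek}$ (the matching terms of $D$ vanish anyway), fix arity-wise bases of $\mathcal Q^{\antishriek}$ adapted to the image of $f^{\antishriek}$, and define $b_{(\nu,(k_1,\dots,k_i))}$, for $\nu$ a basis vector in that image, as the corresponding linear combination of the elements $h\bigl(a_{(\mu,(k_1,\dots,k_i))}\bigr)$ dictated by a chosen linear section of $f^{\antishriek}$. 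This yields a candidate family $\{b_\beta\}_{\beta\in I(f^{\antishriek}(\Gamma^c))}$ in $f_!(A)$.

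Next I would check the two axioms of Definition~\ref{def: Defining systems for operadic Massey products} for $\{b_\beta\}$. Axiom (1) is immediate: $h\bigl(a_{(\id,(i))}\bigr)$ is a cycle since $h$ is a chain map, and it represents $h_\ast(x_i)$ by definition of $h_\ast$. For axiom (2), given $\beta=(f^{\antishriek}(\mu),(k_1,\dots,k_i))$ with $f^{\antishriek}(\mu)\neq\id$, I would compute $d(b_\beta)=h\bigl(d\,a_{(\mu,\dots)}\bigr)$, rewrite $d\,a_{(\mu,\dots)}$ using the $\Gamma^c$-defining-system relation, push $h$ through the operations via the intertwining property, and recognize the outcome as the $f^{\antishriek}(\Gamma^c)$-defining-system relation by means of $D\circ f^{\antishriek}=(f\circ f^{\antishriek})\circ D$; the linearity arranged in the previous step is precisely what makes the identity survive when $f^{\antishriek}$ is not injective. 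Running the same computation one step higher shows that the cycle of Equation~\eqref{ecu : Massey cycle} attached to $\{b_\beta\}$ equals $h\bigl(a_{(\Gamma^c,(1,\dots,r))}\bigr)$: the operations transform as above, and the sign $\gamma$ is literally unchanged because $f^{\antishriek}$ preserves homological degree and weight degree and $h_\ast$ preserves the degrees of the $x_i$. Hence $\langle h_\ast(x_1),\dots,h_\ast(x_r)\rangle_{f^{\antishriek}(\Gamma^c)}$ is defined and contains $h_\ast(x)$, which is the desired inclusion.

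I expect the main obstacle to be, as in the pullback case, the bookkeeping forced by a non-injective $f^{\antishriek}$: one must verify that the linearly-combined family $\{b_\beta\}$ genuinely satisfies the inductive defining-system relations for $f^{\antishriek}(\Gamma^c)$ --- not termwise but after the cancellations imposed by linear relations among the images $f^{\antishriek}(\zeta_j)$ --- and that this is coherent across all weights. By contrast, no finite-type or non-vanishing hypothesis is needed here, since we merely transport an already-constructed defining system rather than lift one, and the sign check in \eqref{ecu : Massey cycle} is routine once one observes that $f^{\antishriek}$ and $h_\ast$ are degree- and weight-preserving.
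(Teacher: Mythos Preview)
Your proposal is correct and follows essentially the same approach as the paper, which simply says one constructs the defining system ``in essentially the same manner as in the proof of Proposition~\ref{Prop: Pullback of Massey products}.'' You have in fact supplied more detail than the paper does, correctly identifying the extra ingredient needed here---that the unit $h$ intertwines the $\mathcal P$-action on $A$ with the $f$-restricted $\mathcal Q$-action on $f_!(A)$---and handling the non-injectivity of $f^{\antishriek}$ exactly as in the pullback case.
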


\begin{proof}
One constructs a defining system for a $f^{\antishriek}(\Gamma^c)$-Massey product in essentially the same manner as in the proof of Proposition \ref{Prop: Pullback of Massey products}.

Let $x\in\langle x_1,...,x_r\rangle_{\Gamma^c}$ have a
defining system $\left\{b_{\mu, (i_1, \dots, i_r)}\right\}$. 
We shall prove the statement by constructing a defining system for $x$ as a $f^{\antishriek}(\Gamma^c)$-Massey product.
If $f^\antishriek$ is injective, then the statement is trivial.
Indeed, since $D$ commutes with $f^{\antishriek}$,
we may obtain the desired defining set  $\left\{b_{f^{\antishriek}(\mu), (i_1, \dots, i_r))}\right\}$ 
by setting $b_{f^{\antishriek}(\mu), (i_1, \dots, i_r))} :=h\left(b_{\mu, (i_1, \dots, i_r))}\right)$.
If $f^{\antishriek}$ is not injective, 
then the set before may fail to be a defining system. 
Two problems may arise. 
Firstly, $f^{\antishriek}(\mu)$ may be zero. 
In this case, however, any term coming from $D$ 
in which $f^{\antishriek}(\mu)$ plays a role will also vanish,
so we may safely remove any term of the form $b_{f^{\antishriek}(\mu), (i_1, \dots, i_r))}$ from the defining system altogether.
Secondly, $f^{\antishriek}$ may fail to preserve linear independence.  
We fix this problem as follows. 

To fix notation, write $\mathcal P = \mathcal F(E,R)$ and $\mathcal Q = \mathcal F(F,S)$.
The map $f^\antishriek$ is a map of weighted quadratic cooperads and, in particular, 
it sends cogenerators to cogenerators,
$\left(f^\antishriek\right)^{(1)}:E\to F$. 
We shall assume that $\Bbbk$-linear bases of $E$ and $F$ are chosen such that the image of the basis elements of 
$E$ are precisely the first $m$ basis vectors $\left\{ u_i\right\}$ of $F$,
and further
that the other basis elements of $F$ are not in the image of $\left(f^\antishriek\right)^{(1)}$,
and that the rest of the elements of the basis of $E$ are in the kernel of $\left(f^\antishriek\right)^{(1)}$. 
These bases now, as explained in the second paragraph of Section \ref{sec : higher-order operadic Massey products},
extend to bases of the operads $\mathcal P$ and $\mathcal Q$ using appropriate symmetric tree monomials. 

The image of $f^{\antishriek}$ now entirely lies in the span of tree monomials labeled by the first $m$ basis elements of $F$. 
This means that there is now a canonical (with respect to this choice of basis) linear section $s$ of $f^{\antishriek}$ 
defined only on this codomain that preserves the cocomposition. 
This section is given by sending sums of tree monomials labeled by the first $m$ basis 
elements of $F$ to sums of tree monomials of the same shape labeled by the corresponding first $m$ basis elements of $E$. 

The section $s$ induces a bijection between the indexing sets $I\left(f^{\antishriek}(\Gamma^c),\left(1,...,r\right)\right)$ and $I\left(\Gamma^c,\left(1,...,r\right)\right)$.
Define $b_{\mu , (i_1, \dots i_n)}$ to be $\left(b_{s(\mu), (i_1, \dots i_n)}\right)$. 
This provides a defining system for $x$.
\end{proof}

\begin{example}
Consider the natural operad map $f:\mathsf{Lie}\to \mathsf{Ass}$. 
For any differential graded Lie algebra $\mathfrak{g}$, 
the differential graded associative algebra $f_{!}(\mathfrak{g})$ is the universal enveloping algebra of $A$.
Recall that there is an embedding of graded vector spaces 
(in fact, graded Lie algebras) $h:\mathfrak{g} \to f_{!}(\mathfrak{g}).$ 
Since on the level of Koszul dual cooperads, 
the map  $f^{\antishriek}:\mathsf{Lie}^{\antishriek}\to \mathsf{Ass}^{\antishriek}$ is the linear dual of 
the forgetful functor $\mathsf{Ass}\to \mathsf{Com}$, 
one can verify that $f^{\antishriek}(\tau_n^c) = \sum_{\sigma \in \mathbb{S}_n} \mu^c_n \cdot \sigma$,
where $\mu^c_n$ is the generator of $\left(\mathsf{Ass}^{\antishriek}\right)^{(n-1)} (n)$ as an $\mathbb{S}$-module. 
Therefore,
\[
h_\ast\langle x_1,...,x_k\rangle_{f^{\antishriek}(\tau_n^c)} \subseteq \langle h_\ast(x_1),...,h_\ast(x_k)\rangle_{\sum_{\sigma \in \mathbb{S}_n} \mu^c_n \cdot \sigma}.
\]
\hfill$\square$
\end{example}

\medskip

\noindent{\bf{A criterion for formality.}} 
In this section,
we characterize the formality of a $\mathcal{P}$-algebra in terms of its Sullivan model,
whenever it makes sense (Prop. \ref{prop : characterization of formality in terms of Sullivan model} below).
Although the result is presumably well-known to experts, 
we could not find a precise statement in the literature.
The connection of the characterization with this paper 
is that it gives us a method to construct non-formal algebras 
with vanishing higher operadic Massey products of all orders.
We leave the task of finding explicit examples to the interested reader.

\medskip

The Sullivan model of a $\mathcal P$-algebra exists after imposing some connectivity assumptions 
on the operad and the algebra itself.
To our knowledge, 
the first work in this direction is \cite{livernet},
where $\mathcal P$ is assumed to be Koszul and concentrated in degree $0$,
while the most general results are achieved in \cite{CiriciRoig19}, 
were $\mathcal P$ is not required to be Koszul, 
but satisfy a mild connectivity requirement called being \emph{tame}.
We stick to the setting of \cite{CiriciRoig19}, 
but will also require $\mathcal P$ to be Koszul
to make use of infinity structures.
An operad $\mathcal P$ is \emph{$r$-tame} for a fixed integer $r\geq 0$
if for every $n\geq 2$, 
$$\mathcal P(n)_q = 0 \ \textrm{ for all } q \geq (n-1)(1+r).$$
The operads $\mathsf{Ass}, \mathsf{Com}$ and $\mathsf{Lie}$ are examples of $0$-tame operads,
as well as their minimal models.
The Gerstenhaber operad $\mathsf{Gerst}$ is $1$-tame.
The main results of \cite{CiriciRoig19} combine to read as follows.

\begin{theorem}
   {\emph{\cite{CiriciRoig19}}}
   Every $r$-connected algebra over an $r$-tame operad has a Sullivan minimal model,
   unique to isomorphism.
\end{theorem}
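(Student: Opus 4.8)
The plan is to transcribe the classical Sullivan--Quillen minimal model machinery to the operadic setting, replacing the free graded-commutative algebra functor by the free $\mathcal P$-algebra functor $\mathcal F_{\mathcal P}$, and letting the $r$-connectivity of the algebra and the $r$-tameness of the operad play the roles that the classical connectivity and degree bounds play in rational homotopy theory. Here a \emph{Sullivan minimal model} of $A$ means a quasi-free $\mathcal P$-algebra $\left(\mathcal F_{\mathcal P}(V),d\right)$ whose generating complex $V$ has zero internal differential and whose differential is \emph{decomposable} (it carries each generator into the ideal generated by the arity $\geq 2$ part), together with a $\mathcal P$-algebra quasi-isomorphism $\left(\mathcal F_{\mathcal P}(V),d\right)\xrightarrow{\sim} A$; one expects $V$ to be, up to suspension, the Quillen homology $H_*\left(\mathcal P^{\antishriek}(A)\right)$ of Section 1.1. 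Existence is proved by an inductive cell-attachment construction, and uniqueness by the standard two-step argument: a quasi-isomorphism between minimal models is an isomorphism, and any two minimal models of $A$ are connected by such a quasi-isomorphism.

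\textbf{Existence.}
Since $A$ is $r$-connected, $H_*(A)$ is concentrated in degrees $\geq r+1$, which supplies the base case: take $V_{r+1}:=H_{r+1}(A)$ with zero differential together with a $\mathcal P$-algebra map $\mathcal F_{\mathcal P}(V_{r+1})\to A$ realizing it. One then builds $(V,d)$ and $\varphi\colon \mathcal F_{\mathcal P}(V)\to A$ by induction on the internal degree: given $\varphi_n\colon \left(\mathcal F_{\mathcal P}(V_{\leq n}),d\right)\to A$ that is an isomorphism on $H_{<n}$ and a surjection on $H_n$, adjoin in degree $n+1$ two kinds of generators, namely \emph{relation} generators whose differential is a cycle of $\mathcal F_{\mathcal P}(V_{\leq n})_n$ representing a class in $\Ker H_n(\varphi_n)$, over which $\varphi$ is extended by a chosen nullhomotopy in $A$, and \emph{homology} generators with zero differential mapping onto lifts of a basis of $\operatorname{coker} H_{n+1}(\varphi_n)$. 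Performing these attachments in the correct order keeps the construction degreewise finite --- an arity-$k$ operation applied to generators of degree $\geq r+1$ lands in degree $\geq k(r+1)$, so only finitely many arities feed a given internal degree --- and, thanks to the estimates discussed below, keeps the differential decomposable. Passing to the colimit over $n$ yields the minimal model.

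\textbf{Uniqueness.}
For uniqueness up to isomorphism I would argue exactly as classically. First, any $\mathcal P$-algebra quasi-isomorphism $f\colon \left(\mathcal F_{\mathcal P}(V),d\right)\to\left(\mathcal F_{\mathcal P}(W),d'\right)$ between minimal models is an isomorphism: filter both sides by word length in the generators; since both differentials are decomposable, $f$ respects this filtration, and on the associated graded it is $\mathcal F_{\mathcal P}$ applied to the induced map of indecomposables $V\to W$, which is an isomorphism because $f$ is a quasi-isomorphism and both sides now carry zero differential; a standard filtration argument upgrades this to $f$ being an isomorphism. Second, given two minimal models $\varphi\colon M\xrightarrow{\sim}A$ and $\psi\colon N\xrightarrow{\sim}A$, one lifts $\varphi$ through the quasi-isomorphism $\psi$ to a map $\theta\colon M\to N$ with $\psi\theta\simeq\varphi$, built generator by generator along the degree filtration, the obstruction to extending over a new generator being a homology class of $N$ that vanishes since $\psi$ is a quasi-isomorphism; conceptually, this is the statement that minimal quasi-free $\mathcal P$-algebras are cofibrant in the projective model structure on $\mathcal P$-algebras. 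Then $\theta$ is a quasi-isomorphism of minimal models, hence an isomorphism by the first step.

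\textbf{Main obstacle.}
The technical heart, and the only place where the precise form of $r$-tameness is indispensable, is the pair of estimates governing the inductive step: one must show that attaching a degree-$(n+1)$ cell disturbs the homology of the model only in degrees $\geq n+1$, so earlier stages are preserved, and that the cycles one is forced to kill are genuinely decomposable. Both reduce to a connectivity/Künneth-type statement for $\mathcal F_{\mathcal P}$ --- that $H_*\left(\mathcal F_{\mathcal P}(C)\right)$ is controlled by $H_*(C)$ in a range governed by $(n-1)(1+r)$ --- and it is exactly the hypothesis $\mathcal P(n)_q=0$ for $q\geq(n-1)(1+r)$ that validates this estimate. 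Once these are in hand, everything else is a formal transcription of the Sullivan construction for commutative algebras and of Quillen's rational homotopy theory of differential graded Lie algebras.
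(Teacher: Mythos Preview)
The paper does not prove this theorem; it is stated as a summary of the main results of \cite{CiriciRoig19} (Cirici--Roig), with no argument supplied beyond the citation. There is therefore nothing in the paper itself to compare your proposal against.

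That said, your outline is the standard one and is in the spirit of the cited reference and of the earlier work of Livernet \cite{livernet} under stronger hypotheses: transcribe the Sullivan cell-attachment construction with $\mathcal F_{\mathcal P}$ in place of the free commutative algebra, build $V$ by induction on internal degree, and use the $r$-tameness bound $\mathcal P(n)_q=0$ for $q\geq(n-1)(1+r)$ together with the $r$-connectivity of $A$ to control the degree estimates. You correctly identify that this bound is exactly what guarantees that only finitely many arities contribute in each internal degree and that the cycles one must kill are decomposable. Your uniqueness argument---a quasi-isomorphism between minimal objects is an isomorphism, and minimal objects are cofibrant so a lift exists---is also the classical one.

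One small caution: your sketch is quite compressed at the place you yourself flag as the ``main obstacle,'' namely the precise connectivity estimate ensuring that attaching a degree-$(n+1)$ cell does not perturb homology in degrees $\leq n$ when the operations themselves may carry nonzero internal degree. In the cited reference this is handled carefully via an explicit tameness inequality, and it is worth noting that the naive bound you write, $k(r+1)$ for a $k$-ary operation on generators of degree $\geq r+1$, must be offset by the possible negative contribution of $|\mu|$ for $\mu\in\mathcal P(k)$; the tameness hypothesis is calibrated so that the net contribution is still $\geq r+2$ for $k\geq 2$, but this deserves an explicit line. Apart from that, your proposal is a sound high-level roadmap for the proof.
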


Now, suppose that $\mathcal{P}$ is an $r$-tame Koszul operad
and $A$ is an $r$-connected finite type 
algebra for some $r\geq 0$. 
Furthermore, suppose that $A$ is $\mathcal{P}_\infty$-quasi-isomorphic 
to a minimal $\mathcal{P}_\infty$-algebra $H$ with differential $\delta$ whose components $\delta^{(n)}$ vanish
for all $n\geq 2$. 
Then there is a quasi-isomorphism of $\mathcal P^{\antishriek}$-coalgebras
$$
(\mathcal P^{\antishriek}(A), \delta) \xrightarrow{\simeq}  (\mathcal P^{\antishriek}(H), \delta').
$$
Taking the linear dual, 
one obtains a quasi-isomorphism of $\mathcal{P}^{!}$-algebras
$$
(\mathcal P^{!}(H), d') \xrightarrow{\simeq} (\mathcal P^{!}(A), d)   .
$$
The differential $d'$ is decomposable and concentrated in weight $2$.
Therefore, $(\mathcal P^{!}(H), d)$ is a minimal Sullivan model
for $(\mathcal P^{!}(A), d)$,
which is the dual of the bar construction on $A$.
This model is unique up to isomorphism, as mentioned before. 
We sum this discussion up in the following characterization.

\begin{proposition}
\label{prop : characterization of formality in terms of Sullivan model}
Let $\mathcal{P}$ be an $r$-tame Koszul operad for some $r\geq 0$,
and $A$ an $r$-connected finite type $\mathcal{P}$-algebra. 
Then $A$ is formal if, and only if,
the Sullivan minimal model of the dual of 
the bar construction on $A$ admits a differential concentrated in weight 2.
\end{proposition}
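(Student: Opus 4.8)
The plan is to prove both implications by passing through the equivalence between formality and the existence of a minimal $\mathcal{P}_\infty$-structure on $H_*(A)$ with only a quadratic (weight $2$) component, and then dualize. First I would recall that, since $\mathcal{P}$ is Koszul, the homotopy transfer theorem (Theorem \ref{teo : Homotopy Transfer Theorem}) endows $H = H_*(A)$ with a $\mathcal{P}_\infty$-structure $\delta'$, with vanishing arity $1$ term, which is $\mathcal{P}_\infty$-quasi-isomorphic to $A$. Standard homotopy theory of algebras over Koszul operads then gives: $A$ is formal if and only if one may choose such a transferred structure whose higher components $\delta'^{(n)}$ vanish for $n \geq 3$, i.e.\ $\delta'$ is concentrated in weight $2$ — this is the \emph{minimal model} characterization of formality. (One direction is the transfer of the strict $\mathcal{P}$-structure along a formality quasi-isomorphism; the other is that a $\mathcal{P}_\infty$-structure concentrated in weight $2$ is a strict $\mathcal{P}$-structure, namely the one induced on $H_*(A)$, so a $\mathcal{P}_\infty$-quasi-isomorphism to it exhibits formality.)

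Next I would translate this into the language of the excerpt. A transferred $\mathcal{P}_\infty$-structure $\delta'$ on $H$ together with a $\mathcal{P}_\infty$-quasi-isomorphism to $A$ is precisely a quasi-isomorphism of $\mathcal{P}^\antishriek$-coalgebras
$$
(\mathcal{P}^\antishriek(A), \delta) \xrightarrow{\;\simeq\;} (\mathcal{P}^\antishriek(H), \delta').
$$
Since $A$ is $r$-connected and of finite type and $\mathcal{P}$ is $r$-tame, everything in sight is arity-wise finite type and bounded appropriately, so linear duality is exact and turns this into a quasi-isomorphism of $\mathcal{P}^!$-algebras
$$
(\mathcal{P}^!(H), d') \xrightarrow{\;\simeq\;} (\mathcal{P}^!(A), d),
$$
where $(\mathcal{P}^!(A),d)$ is the (commutative, in the classical case) dg algebra dual to the bar/Koszul complex of $A$. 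Here $d'$ is decomposable because $\delta'$ has vanishing arity $1$ part, so $(\mathcal{P}^!(H), d')$ is a \emph{minimal} Sullivan model of $(\mathcal{P}^!(A), d)$; by the uniqueness in \cite{CiriciRoig19} (applicable since $\mathcal{P}$ is $r$-tame and $A$ is $r$-connected finite type), it is \emph{the} minimal model. Under this dualization, the weight grading on $\mathcal{P}^\antishriek$ matches the weight grading on $\mathcal{P}^!$, and $\delta'$ being concentrated in weight $2$ corresponds exactly to $d'$ being concentrated in weight $2$.

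Assembling: if $A$ is formal, choose the transferred structure with $\delta'$ in weight $2$; dualizing gives a minimal Sullivan model of the bar dual with differential in weight $2$, and by uniqueness every minimal model has this property. Conversely, if the minimal Sullivan model of the bar dual has differential concentrated in weight $2$, dualize back to get a minimal $\mathcal{P}_\infty$-structure $\delta'$ on $H$ concentrated in weight $2$, together with a $\mathcal{P}_\infty$-quasi-isomorphism $(\mathcal{P}^\antishriek(A),\delta) \simeq (\mathcal{P}^\antishriek(H),\delta')$; since a weight-$2$-concentrated $\delta'$ is the operadic chain complex of the strict $\mathcal{P}$-algebra $H = H_*(A)$, this quasi-isomorphism is a $\mathcal{P}_\infty$-quasi-isomorphism from $A$ to its homology, hence a zig-zag of $\mathcal{P}$-algebra quasi-isomorphisms (via the rectification of $\mathcal{P}_\infty$-algebras), so $A$ is formal.

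\textbf{Main obstacle.} The delicate point is not any single computation but keeping the dictionary honest: one must check that the finiteness/connectivity hypotheses ($r$-connected, finite type, $r$-tame) genuinely make linear duality an exact equivalence between the relevant categories of $\mathcal{P}^\antishriek$-coalgebras and $\mathcal{P}^!$-algebras arity-wise and degree-wise, and that under this duality ``minimal $\mathcal{P}_\infty$-structure concentrated in weight $2$'' on $H$ corresponds exactly to ``minimal Sullivan model with differential concentrated in weight $2$'' — including that the uniqueness statement of \cite{CiriciRoig19} applies to propagate the weight-$2$ condition from one minimal model to all of them. The equivalence ``$A$ formal $\iff$ $H_*(A)$ admits a compatible $\mathcal{P}_\infty$-structure with $\delta'^{(n)} = 0$ for $n\geq 3$'' should be cited or recalled from the homotopy theory of algebras over Koszul operads rather than reproved.
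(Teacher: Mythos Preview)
Your proposal is correct and follows essentially the same route as the paper: translate formality into the existence of a minimal $\mathcal{P}_\infty$-structure on $H$ with only the strict (weight~$2$) component, pass to the $\mathcal{P}^\antishriek$-coalgebra picture, linearly dualize to $\mathcal{P}^!$-algebras, and invoke uniqueness of the Sullivan minimal model from \cite{CiriciRoig19}. The paper in fact presents this argument as the discussion immediately preceding the proposition and only spells out one direction explicitly; your write-up is somewhat more careful in treating both implications and in flagging the finiteness/connectivity hypotheses needed for duality to behave well.
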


\section{Differentials in the $\mathcal P$-Eilenberg--Moore spectral sequence}
\label{sec: differentials EMSS}

Aside from providing obstructions to formality, 
one of the major uses of higher Massey products is in providing a concrete description of 
the differentials in the classical Eilenberg--Moore spectral sequence.  
The following is a classical result of May \cite{may66}, 
compare also \cite{Sta67},
but adapted to the notation of this paper.

\begin{theorem}
Let $A$ be a differential graded associative algebra, 
and let $ x_1, \dots x_n $  be homology classes 
such that the Massey product set $\langle x_1,...,x_n\rangle$ is non-empty.
Then, the element $[sx_1 \mid \cdots \mid sx_n]$ survives to the $E^{n-1}$-page 
of the Eilenberg--Moore spectral sequence of $A$, 
and furthermore, the suspension $sx$ of any representative of $x\in \langle x_1, \dots x_n \rangle$
is a representative for $d^{n-1}[sx_1 \mid \cdots \mid sx_n]$.
\end{theorem}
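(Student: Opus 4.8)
The plan is to treat this as the $\mathcal P=\mathsf{Ass}$ case of the general correspondence between higher Massey products and the differentials of the $\mathcal P$-Eilenberg--Moore spectral sequence; in fact the statement will be subsumed by Theorem \ref{Thm : Differentials EMSS and Massey products}, but here I would also indicate a direct argument in the spirit of the classical proof. For $\mathcal P=\mathsf{Ass}$ the operadic chain complex $\mathsf{Ass}^{\antishriek}(A)$ is the reduced bar construction of $A$, filtered by bar length (equivalently, by the weight grading of $\mathsf{Ass}^{\antishriek}$), and the resulting spectral sequence is the classical Eilenberg--Moore spectral sequence. Since each $x_i$ is a homology class, $[sx_1\mid\cdots\mid sx_n]$ is a $d^0$-cycle and so defines a class on the $E^1$-page; what must be shown is that this class survives to $E^{n-1}$ and that $d^{n-1}$ sends it to $\pm sx$ for any representative $x$ of a class in $\langle x_1,\dots,x_n\rangle$.

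First I would fix a defining system $\{b_{ij}\}$ for $\langle x_1,\dots,x_n\rangle$ --- which exists precisely because the Massey product set is non-empty (Definition \ref{Def : associative higher Massey products}) --- and assemble from it the single element
\[
z \;=\; \sum_{k=2}^{n}\ \ \sum_{0=i_0<i_1<\cdots<i_k=n}\ \pm\,[\,sb_{i_0i_1}\mid sb_{i_1i_2}\mid\cdots\mid sb_{i_{k-1}i_k}\,]
\]
of the bar complex, the sum ranging over all ways of subdividing $\{0,\dots,n\}$ into at least two blocks (so that only genuine defining-system elements, never a fictitious $b_{0n}$, occur). Its top term, for $k=n$, is $\pm[\,sb_{01}\mid\cdots\mid sb_{n-1,n}\,]$; as each $b_{i-1,i}$ is a cycle representing $x_i$, the image of $z$ in the associated graded is, up to sign, the chosen $E^1$-representative of $[sx_1\mid\cdots\mid sx_n]$, and $z$ itself lies in the filtration stage of that class.

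The heart of the argument is the computation of $dz = d_{\mathrm{int}}z + d_{\mathrm{mult}}z$, where $d_{\mathrm{int}}$ applies the inductive relations $db_{ij}=\sum_{i<k<j}(-1)^{|b_{ik}|+1}b_{ik}b_{kj}$ slot by slot and $d_{\mathrm{mult}}$ multiplies adjacent bar factors. Sorting $dz$ by bar length: the bar-length-$n$ part vanishes because the $k=n$ term of $z$ contains only cycles; and for each $2\le k\le n-1$ the bar-length-$k$ contribution of $d_{\mathrm{int}}$ (expanding one $b_{ij}$ of gap at least $2$) cancels termwise against the bar-length-$k$ contribution of $d_{\mathrm{mult}}$ (merging two adjacent factors of a $(k+1)$-block term), the signs $(-1)^{|b_{ik}|+1}$ of Definition \ref{Def : associative higher Massey products} being exactly what makes this telescoping succeed. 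The only surviving term is the bar-length-$1$ contribution of $d_{\mathrm{mult}}$ on the $k=2$ part of $z$, namely $\pm[\,s\sum_{0<k<n}(-1)^{|b_{0k}|+1}b_{0k}b_{kn}\,]=\pm[sc]$, where $c$ is precisely the Massey-product cycle attached to $\{b_{ij}\}$. Thus $z$ lies in filtration $p$ with $dz$ in filtration $p-(n-1)$, and the standard filtered-complex lemma (an element of $F_p$ whose boundary lands in $F_{p-r}$ defines a class that survives to $E^r$, whose $d^r$ is computed by that boundary) then yields that $[sx_1\mid\cdots\mid sx_n]$ survives to $E^{n-1}$ and that $d^{n-1}[sx_1\mid\cdots\mid sx_n]$ is represented by $dz$, i.e. by $\pm sx$ with $x=[c]$. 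Letting $\{b_{ij}\}$ range over all defining systems realizes every class of $\langle x_1,\dots,x_n\rangle$ in this way.

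I expect the main obstacle to be the sign bookkeeping --- reconciling the Koszul signs introduced by the suspensions in the bar complex with the signs hard-wired into Definition \ref{Def : associative higher Massey products}, and checking the termwise cancellation in every intermediate bar length. A secondary point to verify is that the weight-filtered $\mathsf{Ass}$-Eilenberg--Moore spectral sequence constructed in this paper really does coincide with the classical one. Both of these are sidestepped if one instead derives the statement from Theorem \ref{Thm : Differentials EMSS and Massey products}, using Example \ref{example : Operadic def systems for Ass} to identify classical defining systems with the operadic ones.
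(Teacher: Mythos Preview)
The paper does not actually prove this statement; it is quoted as a classical result of May and serves as motivation for the operadic generalization, Theorem~\ref{Thm : Differentials EMSS and Massey products}. So the only ``paper's own proof'' to compare against is the proof of that general theorem specialized to $\mathcal P=\mathsf{Ass}$, and your proposal is essentially that specialization.

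Concretely, your single element $z$ is, up to sign, exactly the alternating sum $c=c_1-c_2+\cdots+(-1)^{n-2}c_{n-1}$ produced by the Staircase Lemma (Lemma~\ref{lemma : Staircase lemma}) from the paper's elements $c_s$: in the associative case the paper's $c_s$ is the sum over subdivisions of $\{0,\dots,n\}$ into $n-s+1$ blocks of the bar word built from the corresponding $b_{ij}$'s, which is your $k=n-s+1$ summand. Your ``standard filtered-complex lemma'' is precisely the Staircase Lemma, and your telescoping cancellation between $d_{\mathrm{int}}$ and $d_{\mathrm{mult}}$ is the associative instance of the paper's verification that $d^\bullet c_{s+1}=\partial c_s$. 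The only cosmetic difference is that the paper keeps the $c_s$ separate and invokes the Staircase Lemma as a black box, whereas you assemble $z$ first and compute $dz$ directly; the underlying mechanism is identical. Your closing remark --- that one can bypass the sign bookkeeping by deducing the statement from Theorem~\ref{Thm : Differentials EMSS and Massey products} via Example~\ref{example : Operadic def systems for Ass} --- is exactly how the paper treats this theorem.
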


An analogous statement for differential graded Lie algebras appears in \cite{Allday77}. 
Our following result generalizes these statements to all algebras over a Koszul (in fact, quadratic) operad.
Recall from Section \ref{sec: Operad prerequisites} the construction of the spectral sequence.
We will sometimes confuse homology classes with representatives to lighten the notation.

\begin{theorem}
\label{Thm : Differentials EMSS and Massey products}
Let $A$ be a $\mathcal P$-algebra, and $ x_1, \dots x_r $ homology classes 
such that the Massey product set $\langle x_1,...,x_r\rangle_{\Gamma^c}$ is defined 
for a cooperation $\Gamma^c \in \mathcal P^\antishriek(r)^{(n)}$.
Then the element 
\[
\Gamma^c\otimes x_1\otimes \cdots \otimes x_r \in \left(\mathcal P^{\antishriek} \right)^{(n)} (r)\otimes H_*(A)^{\otimes r}
\]
survives to the $E^{n-1}$ page in the $\mathcal P$-Eilenberg--Moore spectral sequence,
and for  $x\in \langle x_1, \dots x_n \rangle$
$$
d^{n-1}\left(\Gamma^c\otimes x_1\otimes \cdots \otimes x_r\right) \in (-1)^{n-2} \left[\idd\otimes x\right].
$$

\end{theorem}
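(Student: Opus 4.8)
The plan is to identify the ingredients of the $\Gamma^c$-Massey product defining system directly with data that witnesses survival to the $E^{n-1}$ page and computes the differential $d^{n-1}$ on it. Recall from Section~\ref{sec: construction of the EMSS} that the $E^0$ page of the $\mathcal P$-Eilenberg--Moore spectral sequence is $\left(\mathcal P^\antishriek(A)^{(p)}\right)_{p+q}$ with $d^0$ the internal differential of $A$, that $E^1_{p,q} = \left(\mathcal P^\antishriek(H)^{(p)}\right)_{p+q}$, and that the higher differentials $d^k$ are governed by the weight $k+1$ component $\delta^{(k+1)}$ of the codifferential of the operadic chain complex. The element $\Gamma^c\otimes x_1\otimes\cdots\otimes x_r$ lives in weight $n$ and has already passed to $E^1$ (each $x_i$ is a homology class). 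The key observation is that the Massey inductive map $D$ was designed precisely so that, up to the factor $\kappa$, it reproduces how $\delta$ propagates weight: the inductive relations of a $\Gamma^c$-defining system, Definition~\ref{def: Defining systems for operadic Massey products}(2), say exactly that each $a_{\mu,(k_1,\dots,k_i)}$ is a chosen nullhomotopy for the image under the weight-graded piece of $\delta$ of the lower-weight data. Thus a $\Gamma^c$-defining system is the same thing as a zig-zag of chains in the filtered complex $\mathcal P^\antishriek(A)$ witnessing that $\Gamma^c\otimes x_1\otimes\cdots\otimes x_r$ is a permanent cycle up to page $n-1$.

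First I would make this precise. Given a $\Gamma^c$-defining system $\{a_\alpha\}$, I would assemble the element
\[
\widetilde{\Gamma} \;=\; \Gamma^c\otimes a_{(\idd,(1))}\otimes\cdots\otimes a_{(\idd,(r))} \;+\; (\text{lower-weight correction terms built from the } a_\alpha)
\]
inside $\mathcal P^\antishriek(A)$, where the correction terms are the tree monomials obtained by grafting the $a_{\mu,(k_1,\dots,k_i)}$ in place of the corresponding cogenerated subtrees of $\Gamma^c$, with the signs of Definition~\ref{def : Higher-order Massey products for algebras over algebraic operads}(1). The defining-system relations, combined with the commutative square relating $d$ to $\gamma_A$ used in the proof of Proposition~\ref{Prop: cycle}, should give that $\delta(\widetilde\Gamma)$ lands in filtration level $1$: concretely, $\delta(\widetilde\Gamma) = \idd\otimes a_{\Gamma^c,(1,\dots,r)} + (\text{terms of positive filtration that are boundaries})$, where $a_{\Gamma^c,(1,\dots,r)}$ is the Massey cycle of Equation~\eqref{ecu : Massey cycle}. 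This is essentially the same bookkeeping carried out in Proposition~\ref{Prop: cycle} and in the quasi-isomorphism argument of Proposition~\ref{prop : Morphisms preserve Massey products}: one separates $\delta(\widetilde\Gamma)$ into a "telescoping" part that collapses to $\idd\otimes a_{\Gamma^c,(1,\dots,r)}$ plus a part vanishing because its weight-$2$ graftings lie in the corelations $R$ (hence are zero in $\mathcal P$). The sign $(-1)^{n-2}$ should drop out of tracking the iterated application of $\Delta^+$ versus the single coderivation $\delta$; I would fix it at the end by comparing with the associative case, Theorem of May recalled above, where $n-2$ matches the classical convention.

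Once $\widetilde\Gamma$ is in hand, the conclusion is the standard spectral-sequence bookkeeping: an element of $E^0$ whose $\delta$-image, after the chosen corrections, drops by $n-1$ in filtration represents a class surviving to $E^{n-1}$, and its $d^{n-1}$ is the class of that dropped image, namely $\pm[\idd\otimes a_{\Gamma^c,(1,\dots,r)}]$. Running over all defining systems runs over all representatives $x$ of the Massey product set $\langle x_1,\dots,x_r\rangle_{\Gamma^c}$, giving $d^{n-1}(\Gamma^c\otimes x_1\otimes\cdots\otimes x_r)\in(-1)^{n-2}[\idd\otimes x]$. \textbf{The main obstacle} I anticipate is the precise matching of the weight filtration with the iterated coproduct: one must verify that the multi-step grafted element $\widetilde\Gamma$ genuinely has the claimed image under the \emph{single} coderivation $\delta$, which requires carefully unwinding how $\delta$ acts on a cofree coalgebra element supported on many weights at once, i.e.\ the reconstruction formula $\delta = (\idd\circ(\idd;m))\circ\triangle_{(1)}$ applied iteratively. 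Getting the signs consistent between that formula, the Massey inductive map $D$ (which uses $\Delta^+$ and $\kappa$), and the explicit signs in Equation~\eqref{ecu : Massey cycle} will be the delicate part; I expect the cleanest route is to phrase everything tree-monomial-wise, as was done in Proposition~\ref{Prop: cycle}, where the combinatorial description of $D$ makes the grafting transparent.
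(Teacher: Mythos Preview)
Your proposal is correct and takes essentially the same approach as the paper: the element $\widetilde\Gamma$ you describe is precisely the alternating sum $c_1-c_2+\cdots+(-1)^{n-2}c_{n-1}$ of the chains the paper constructs, where each $c_s$ is obtained by applying the weight-projected coproduct $\triangle_{n-s}$ to $\Gamma^c$ and plugging in the defining-system elements. The paper packages your ``standard spectral-sequence bookkeeping'' via the Staircase Lemma (Lemma~\ref{lemma : Staircase lemma}), which reduces the computation of $\delta(\widetilde\Gamma)$ to verifying the individual zig-zag identities $d^\bullet c_{s+1}=\partial c_s$ and thereby resolves exactly the obstacle you flag about matching the single coderivation $\delta$ against the iterated grafting.
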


Our proof of this is an adaption of the classical one, 
so we shall therefore make use of the Staircase Lemma \cite[Lemma 2.1]{kraines72}, 
which we briefly recall next.

\begin{lemma} 
\label{lemma : Staircase lemma}
Let $A=\left(A_{*,*}, d', d''\right)$ be a bicomplex,
denote by $d$ the differential on its total complex, 
and fix $c_1,\dots, c_n$ homogeneous elements in $A$. 
Suppose that $d'c_s = d''c_{s+1}$ for $1 \leq s\leq n-1$,
and define $c := c_1-c_2+\cdots +(-1)^{n-1}c_n.$ 
Then, $dc = d'c+d''c = d''c_1+ (-1)^{n-1}d'c_n,$ and furthermore,
in the spectral sequence $\left\{\left(E^r,d^r\right)\right\}$ associated to the bicomplex, 
if $d''c_1=0$ then $c_1$ survives to $E^n$, and $d^n[c_1]= (-1)^{n-1}[d'c_n].$
\end{lemma}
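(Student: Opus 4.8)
The plan is to split the statement into the purely algebraic identity for $dc$ and the spectral-sequence assertion, deducing the latter from the standard cocycle description of the spectral sequence of a filtered complex. First I would prove $dc = d''c_1 + (-1)^{n-1}d'c_n$ by telescoping. Writing $d = d' + d''$ and expanding $dc = \sum_{s=1}^n (-1)^{s-1}\bigl(d'c_s + d''c_s\bigr)$, I would substitute the hypotheses $d'c_s = d''c_{s+1}$, valid for $1 \le s \le n-1$, into the $d'$-terms; after reindexing, the interior contributions $d''c_2, \dots, d''c_n$ each occur twice with opposite signs and cancel in pairs, leaving exactly $d''c_1$ and $(-1)^{n-1}d'c_n$. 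This uses only the zig-zag relations and is routine. I would also record that $d'c_n$ is a $d''$-cycle, so that $[d'c_n]$ makes sense: from $d'd'' + d''d' = 0$ and $d''c_n = d'c_{n-1}$ one gets $d''(d'c_n) = -d'(d''c_n) = -(d')^2 c_{n-1} = 0$.

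For the spectral-sequence statement I would invoke the explicit filtration description of the associated spectral sequence, filtering the total complex by one of the two gradings as in the construction recalled in Section \ref{sec: construction of the EMSS}. In that description a class on the $E^r$-page is represented by an element in a fixed filtration piece whose total differential drops $r$ filtration degrees, and $d^r$ sends it to the class of that total differential. Since $d''c_1 = 0$, the element $c_1$ is a cycle for the $E^0$-differential and defines a class on $E^1$. The staircase assembles the corrected representative $c = \sum_{s=1}^n (-1)^{s-1}c_s$, in which each $c_{s+1}$ lies one filtration degree from $c_s$ (the differentials $d'$ and $d''$ shift the two gradings oppositely, so the chain runs diagonally through the bicomplex); thus $c$ agrees with $c_1$ modulo deeper filtration and represents the same class on the intermediate pages. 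By the identity above, $dc = (-1)^{n-1}d'c_n$ lies exactly $n$ filtration degrees from $c_1$, and the criterion then gives precisely that $[c_1]$ survives to $E^n$ and that $d^n[c_1] = (-1)^{n-1}[d'c_n]$.

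The step I expect to be the main obstacle is the bookkeeping of filtration degrees combined with matching the sign $(-1)^{n-1}$ to the chosen convention for the spectral-sequence differential. One must verify that the staircase descends by exactly one filtration degree at each step, so that $dc$ lands $n$ steps away and not more or fewer, and that the corrected lift $c$ genuinely represents $[c_1]$ on every intermediate page $E^1, \dots, E^{n-1}$ --- equivalently, that $c_1$ is not prematurely killed by a shorter staircase. Once these conventions are fixed, the remaining verification is short.
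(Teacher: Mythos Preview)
The paper does not give its own proof of this lemma: it merely recalls the statement and cites it as \cite[Lemma 2.1]{kraines72}. Your argument is the standard one and is correct --- the telescoping computation for $dc$ is right, your check that $d'c_n$ is a $d''$-cycle is fine, and the spectral-sequence part via the filtration description is exactly how this is usually done. One small remark: your worry about $c_1$ being ``prematurely killed'' is not actually needed, since the lemma only asserts that $[c_1]$ is a $d^r$-cycle for $r<n$ (which the existence of the staircase $c$ witnesses) and computes $d^n[c_1]$; if $[c_1]$ happens to be a boundary on some earlier page the formula still holds trivially.
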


Our approach to proving Theorem \ref{Thm : Differentials EMSS and Massey products} is therefore to construct a sequence $c_1,\dots c_{r-1}$ 
satisfying the conditions of the Staircase Lemma.

\begin{proof}[Proof of Theorem \ref{Thm : Differentials EMSS and Massey products}]
 First, fix a defining system $\left\{a_{\mu,(k_1,\dots,k_i)}\right\}$ 
 for the element $x\in \langle x_1, \dots, x_r \rangle_{\Gamma^c}$.
For each $s$ between $1$ and $n-1$, 
we will define $c_s$ in terms of this defining system and the auxiliary maps
\[
\triangle_s: \mathcal P^{\antishriek} \xrightarrow{\triangle} \mathcal P^{\antishriek} \circ \mathcal P^{\antishriek} \xrightarrow{p_s\circ \operatorname{id}} \left(\mathcal P^{\antishriek}\right)^{(s)} \circ \mathcal P^{\antishriek},
\]
where $p_s$ is the projection onto the weight $s$ component. 
More precisely, the element $c_s$ is defined as
\[
    c_s := \sum \left[\zeta^{(n-s)} \otimes a_{\mu_1,(\sigma^{-1}(1),\dots \sigma^{-1}(i_1))}\otimes \cdots \otimes  a_{\mu_{m},(\sigma^{-1}(i_1+\cdots +i_{m-1}+1),\dots \sigma^{-1}(r))}\right],
\]
where $\triangle_{n-s}\left(\Gamma^c\right) = \sum \left(\zeta^{(s)}; \mu_1, \dots, \mu_m; \sigma\right)$.
In particular, $c_1 = [\Gamma^c \otimes a_{\idd,(1)}\otimes \cdots \otimes a_{\idd, (r)}],$
and 
\begin{equation*}
    c_{n-1} = \sum \left(s\zeta^{(1)}\right) \otimes a_{\mu_{1},(\sigma^{-1}(1),\dots, \sigma^{-1}(v_1))}\otimes \cdots\otimes a_{\mu_{m}, (\sigma^{-1}\left(v_1+\cdots+ v_{m-1}+1\right),\dots, \sigma^{-1}\left(r\right))},
\end{equation*}
where 
$D\left(\Gamma^c\right) = \sum \left(\zeta^{(1)}; \mu_{1},\dots,\mu_{m}; \sigma \right)$ with $\zeta^{(1)}\in E$ and thus $s\zeta^{(1)}\in sE \subset \left(\mathcal P^{\antishriek}\right)^{(1)}$. 
To finish, we must verify that the conditions of the Staircase Lemma \ref{lemma : Staircase lemma} are met.
Denote by $\partial$  the external differential on $\mathcal P^{\antishriek}(A)$, and by $d^\bullet$ its internal differential.
Then, since $d(a_{\idd, (i)}) = 0$ for each $i$, it follows that $d^\bullet c_1 = 0$.
A routine calculation shows that $d^\bullet c_{s+1} = \partial c_s$ for each $s$.
It follows from the Staircase Lemma that 
\[
d_{n-1}[c_1] = (-1)^{n}[\partial c_{n-1}] = (-1)^{n}[x].
\]
In the expression of $c_{n-1}$, the element $\zeta$ is in the image of the twisting morphism $\kappa$. 
In particular, this implies that it is of weight 1, 
and so $\partial c_{n-1} \in \idd\otimes \langle x_1, \dots, x_r \rangle_{\Gamma^c}$.
This finishes the proof.
\end{proof}

The formality of a dg algebra of some type is well-known to be related to 
the collapse of the associated Eilenberg--Moore-type spectral sequence, 
see for instance \cite{HalperinStasheff} for the commutative case,
and \cite{DanielYves} for the Lie case.
The following statement generalizes these results.

\begin{theorem}
\label{thm : formal implies collapse}
The Eilenberg--Moore spectral sequence of a 
formal $\mathcal P$-algebra  over a Koszul operad collapses at the $E^2$-page. 
The same is true for formal $\mathcal P_\infty$-algebras.
\end{theorem}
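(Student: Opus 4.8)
The plan is to combine Corollary \ref{cor:QuasiIsoMasseyProducts} (bijection of Massey product sets under weak equivalence) with the fact that a morphism of $\mathcal P^{\antishriek}$-coalgebras induces a morphism of the associated $\mathcal P$-Eilenberg--Moore spectral sequences, as recorded in Section \ref{sec: construction of the EMSS}. First I would observe that if $A$ is a formal $\mathcal P$-algebra, then by definition there is a zig-zag of $\mathcal P$-algebra quasi-isomorphisms connecting $A$ to its homology $H$ equipped with the induced $\mathcal P$-algebra structure and zero differential. Each such quasi-isomorphism $g\colon X\to Y$ of $\mathcal P$-algebras induces a quasi-isomorphism of operadic chain complexes $\mathcal P^{\antishriek}(X)\to\mathcal P^{\antishriek}(Y)$ respecting the weight filtration, hence a morphism of the corresponding $\mathcal P$-Eilenberg--Moore spectral sequences which is an isomorphism on $E^2$-pages (both $E^2$-pages compute $H_*(\mathcal P^{\antishriek}(H'))$ for the common homology $H'$). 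By the comparison/Zeeman theorem for filtered complexes, a morphism of spectral sequences that is an isomorphism on some page is an isomorphism on all later pages; running along the zig-zag, the $\mathcal P$-Eilenberg--Moore spectral sequence of $A$ is isomorphic, from the $E^2$-page onwards, to that of $(H,0)$.

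Second I would show that the $\mathcal P$-Eilenberg--Moore spectral sequence of $(H,0)$ collapses at $E^2$. For an algebra with zero differential, the operadic chain complex $(\mathcal P^{\antishriek}(H),\delta)$ already has $\delta^{(1)}=0$ (its arity-$1$ part is induced by the internal differential, which vanishes), so $E^0=E^1$; and the only possibly nonzero differential beyond $E^1$ is $d^1=H_*(\delta^{(2)})$, the weight-$2$ component of the codifferential induced by the strict $\mathcal P$-structure. But $\delta$ on $\mathcal P^{\antishriek}(H)$ comes from the strict structure map, which factors through $\kappa$; all higher-weight components $\delta^{(n)}$ for $n\geq 2$ are the genuine structure maps of a strict $\mathcal P$-algebra, and since the differential on $H$ is zero, taking homology at each stage is trivial — so $E^2=E^3=\cdots$. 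Concretely, this is precisely the observation already used in the proof of the corollary after Corollary \ref{cor:QuasiIsoMasseyProducts}: algebras with trivial differential have no secondary phenomena, and the same reasoning shows all higher $d^r$ vanish. Hence the $\mathcal P$-Eilenberg--Moore spectral sequence of $(H,0)$ degenerates at $E^2$, and therefore so does that of $A$.

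For the $\mathcal P_\infty$-case, I would repeat the argument using the second remark in Section \ref{sec: construction of the EMSS}, which states that the spectral sequence construction goes through for $\mathcal P_\infty$-algebras with straightforward adjustments, and the fact that $\mathcal P_\infty$-quasi-isomorphisms induce morphisms of $\mathcal P^{\antishriek}$-coalgebras and hence of the associated spectral sequences; a formal $\mathcal P_\infty$-algebra is, by definition, $\mathcal P_\infty$-quasi-isomorphic to its homology with the induced strict $\mathcal P$-structure (all higher $\mathcal P_\infty$-operations vanishing), so the same collapse at $E^2$ follows. The main obstacle I anticipate is bookkeeping the comparison of spectral sequences along a zig-zag rather than a single map, and verifying carefully that the weight filtration is respected by $\mathcal P_\infty$-morphisms and not merely by strict ones — but this is exactly the compatibility already asserted in the construction section, so it should be a matter of citing it correctly rather than proving something new.
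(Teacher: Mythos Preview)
Your proposal is correct and follows essentially the same strategy as the paper: reduce via comparison of spectral sequences to the case of $(H,0)$, then observe that its spectral sequence collapses at $E^2$ because for a strict algebra with trivial differential the codifferential on $\mathcal P^{\antishriek}(H)$ lowers weight by exactly one. The paper carries out the reduction through a single $\mathcal P_\infty$-quasi-isomorphism (treating the $\mathcal P$ and $\mathcal P_\infty$ cases at once) and makes the collapse step explicit with a short relative-cycles argument; your wording of that step is somewhat imprecise but identifies the same mechanism.
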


\begin{proof}
Since every $\mathcal P$-algebra is a $\mathcal P_\infty$-algebra,
we prove the result for $\mathcal P_\infty$-algebras.
Let $A$ be a formal $\mathcal P_\infty$-algebra, and denote by $H=H_*(A)$ its homology as a $\mathcal P$-algebra. 
Then there are $\mathcal P_\infty$-quasi-isomorphisms $A\leftrightarrows H$, 
or equivalently, 
$\mathcal P^{\antishriek}$-coalgebra quasi-isomorphisms
\[
\mathcal P^{\antishriek}(A) \leftrightarrows  \mathcal P^{\antishriek}(H).
\]
Recall that the codifferential $\delta_H(\mu, -)$ of $\mathcal P^{\antishriek}(H)$ vanishes unless $\mu\in \mathcal{P}^{\antishriek}$ has weight 1.
By comparison, 
both Eilenberg--Moore spectral sequences are isomorphic from the first page. 
Therefore, 
it suffices to consider the case where $A$ has no internal differential 
and $\delta_A^{(i)}$ vanishes when $i\neq 1$. 
We now check that the differential $d^i$ in the Eilenberg--Moore spectral sequence vanishes for $i\geq 2.$
To do so, 
we will use the standard relative cycles and boundaries spaces,
\[
Z^r_p=F_p \mathcal{P}^{\antishriek}(A)\cap \delta^{-1}\left(F_{p-r} \mathcal{P}^{\antishriek}(A)\right)\quad \textrm{and} \quad D_p^r=F_p \mathcal{P}^{\antishriek}(A) \cap \delta\left(F_{p+r}\mathcal{P}^{\antishriek}(A)\right).
\]
The differential $d^r$ in the successive pages of the spectral sequence is induced by the restrictions
of the differential $\delta$ of $\mathcal{P}^{\antishriek}(A)$ to 
$Z^r_p$, as shown below:
\begin{center}
\begin{tikzcd}[row sep=huge, column sep=huge]
Z^r_p \ar{r}{\delta} 	
\ar[two heads]{d}		&		Z^r_{p-r} \ar[two heads]{d}\\
E^r_p = Z^r_p / Z^{r-1}_{p-1}+D^{r-1}_{p} \ar[dashed]{r}{d^r} 
&   E^k_{p-r}= Z^r_{p-r} / Z^{r-1}_{p-r-1}+D^{r-1}_{p-r}
\end{tikzcd} 
\end{center}
Fix some $r\geq n$. 
To check that  $d^r=0$, 
we will fix an element 
 $x\in Z^r_p$
and find a representative $y$ of the class $[x]\in E^r_p$
such that 
\[
\delta(y)\in Z^{r-1}_{p-r-1}+D^{r-1}_{p-r}=F_{p-r-1} \mathcal{P}^{\antishriek}(A) \cap \delta^{-1}\left( F_{p-r-1}\mathcal{P}^{\antishriek}(A) \right)+ F_{p-r} \mathcal{P}^{\antishriek}(A) \cap \delta \left( F_{p-1} \mathcal{P}^{\antishriek}(A) \right).
\]
Indeed, 
write $x=x_1 + \cdots + x_p$ where each $x_i\in  \mathcal{P}^{\antishriek}(i)\otimes A^{\otimes i}$. 
Now, since 
$\delta(x)\in F_{p-r}\mathcal{P}^{\antishriek}(A)$,
it follows that $\delta\left(x_{p-r+1}+\cdots + x_p\right)=0$.
Thus, we take $y=x-\left(x_{p-r+1}+\cdots + x_p\right)$ as a representative of the form we needed,
finishing the proof.
\end{proof}

The converse to Theorem \ref{thm : formal implies collapse} in general is not true, it fails even in the associative case and some
examples are computed in some of the references given before the statement of the theorem.
\begin{remark}
Massey products sometimes completely determine formality. For the case of the dual numbers operad, the Massey products are precisely the differentials in the spectral sequence associated to the bicomplex. So if the differentials all vanish,  the spectral sequence must collapse on the $E^2$-page.
\end{remark}

\section{Higher-order operadic Massey products and $\mathcal{P}_\infty$-structures}
\label{sec : Massey products and P-infinity algebras}

In this section, we  fix a Koszul operad $\mathcal{P}$.
In this case, there is a natural relationship
between the higher-order operadic Massey products and $\mathcal P_\infty$-structures 
on the homology of the $\mathcal P$-algebras.

\medskip

Let $A$ be a $\mathcal P$-algebra, and denote by $H$ its homology.
Since $\mathcal P$ has no operadic differential, $H$ is a $\mathcal P$-algebra in a natural way.
It is well-known that the homotopy transfer theorem (in its various forms)
extend this $\mathcal P$-algebra structure on $H$ to a $\mathcal P_\infty$-structure 
that retains the quasi-isomorphism class of $A$ as a $\mathcal P_\infty$-algebra.
In this paper, 
we mainly focus on D. Petersen's extension \cite{petersen2020}
of T. Kadeishvili's classical transfer theorem \cite{Kad80}, 
which is recalled in Theorem \ref{teo : Homotopy Transfer Theorem}.
See also \cite[Section 10.3]{loday12}.
It is a common misconception to expect that higher-order Massey products sets of the sort $\langle x_1,...,x_r\rangle$ 
are related to $\mathcal P_\infty$-structure maps $\theta_r$ induced on the homology $H$ via the homotopy transfer theorem 
by the clean formula
\[
\pm\theta_r(x_1,...,x_r) \in \langle x_1,...,x_r\rangle.
\]
At this level of generality, the assertion is incorrect.
However, it is true for secondary Massey products,
as shown in  \cite[Theorem 3.9]{Mur21}.

Let us make the connection between infinity structures and higher-order Massey products more precise. 
First, recall that codifferentials on the cofree conilpotent $\mathcal P^\antishriek$-coalgebra $\mathcal P^\antishriek(A)$ 
are in bijective correspondence with $\mathcal P_\infty$-structures 
on the chain complex $A$ \cite[Theorem 10.1.13]{loday12}.

\begin{definition}
Let $A$ be a $\mathcal{P}$-algebra,
$\Gamma^c\in \left(\mathcal{P}^{\antishriek}\right)^{(n)} (r)$, and  $x_1,...,x_r$ homogeneous elements of $H=H_*(A)$
for which the $\Gamma^c$-Massey product set 
$\langle x_1, \dots, x_r \rangle_{\Gamma^c}$ is defined.
A given $\mathcal{P}_\infty$ structure $\delta$ on $H$ for which $A$ and $H$ are quasi-isomorphic 
is said to \emph{recover} the Massey product element $x \in \langle x_1,\dots, x_r\rangle_{\Gamma^c}$ if, 
up to sign, 
\[
\delta_r\left(\Gamma^c \otimes x_1\otimes \cdots \otimes x_r\right) = x.
\]
\end{definition}

We begin by showing below that given a higher-order Massey product $x \in \langle x_1,\dots, x_r\rangle_{\Gamma^c}$, 
there is always a choice of $\mathcal P_\infty$ structure on $H$ quasi-isomorphic to $A$ which recovers $x$.
In general, however, 
an arbitrary $\mathcal P_\infty$ structure on $H$ quasi-isomorphic to $A$ 
only recovers a given higher-order Massey product element up to multiplications of lower arity.  
Our proof strategy is very similar to the proof in \cite{jose20}, where
the authors demonstrated this result in the associative case.
In the result below, we require the operad to be reduced for Theorem \ref{teo : Homotopy Transfer Theorem} to apply.

\begin{theorem} 
\label{thm: P-infinity and recovery of Massey}
Let $A$ be an algebra over a reduced Koszul operad $\mathcal P$,
and let $H$ be its homology.
Let $\Gamma^c\in \left(\mathcal{P}^{\antishriek}\right)^{(n)} (r)$,
and assume that $x_1,...,x_r$ are $r\geq 3$ homogeneous elements of $H$ for which the $\Gamma^c$-Massey product set 
$\langle x_1, \dots x_r \rangle_{\Gamma^c}$ is defined.
Let $x \in \langle x_1, \dots x_r \rangle_{\Gamma^c}$. 
Then:
\begin{enumerate}[(i)]
\item 
There is a choice $\mathcal{P}_\infty$ structure $\delta$ on $H$ which recovers $x$.
\item
For any $\mathcal{P}_\infty$ structure $\delta$ on $H$ quasi-isomorphic to $A$, 
we have
\[
\delta^{(n)}\left(\Gamma^c \otimes x_1 \otimes \cdots\otimes x_r\right) = x + \Phi,
\]
where
$\Phi \in \displaystyle \sum^{n-1}_{i=1}\operatorname{Im} \left(\delta^{(i)}\right).
$
\end{enumerate}
\end{theorem}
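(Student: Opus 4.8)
The plan is to build both statements out of a single construction: run the homotopy transfer theorem (Theorem \ref{teo : Homotopy Transfer Theorem}) while making the free choices in the induction so that they are compatible with a chosen $\Gamma^c$-defining system for $x$. Recall that in the sketch of Theorem \ref{teo : Homotopy Transfer Theorem} one builds $\delta$ and $F$ by induction on arity, and at each stage the new component $F^n_n$ is \emph{any} element with $dF^n_n = F^{n-1}_n - e'$, while $\delta^n_n$ is forced. The point of part (i) is that a $\Gamma^c$-defining system $\{a_\alpha\}$ is, essentially by Definition \ref{def: Defining systems for operadic Massey products}, exactly a coherent system of such ``primitives'': the relations $d(a_{\mu,(k_1,\dots,k_i)}) = \sum \zeta(a_{\zeta_1,\dots},\dots)$ say that each $a_{\mu,\dots}$ bounds the expression produced by the lower-weight part of the transferred structure evaluated on the lower-arity components. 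So first I would set up a dictionary: given a defining system for $x$, I define the coalgebra map $F$ on $\mathcal P^\antishriek(H)$ by declaring $F$ on the cogenerator $\mu$ (and its relevant translates indexed by the tuples $(k_1,\dots,k_i)$) to output the element $a_{\mu,(k_1,\dots,k_i)}$, extended as a $\mathcal P^\antishriek$-coalgebra morphism. One checks, by induction on weight using the Massey inductive map $D$ and the defining-system relations, that this $F$ satisfies the transfer equations $F\circ \delta = \delta_A\circ F$, with $\delta$ the coderivation whose lower-arity components are the ones produced along the way; the key identity to verify is that the image of $G$ from the proof of Proposition \ref{Prop: cycle} vanishes, which is precisely what forces the compatibility. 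Then the weight-$n$ component $\delta^{(n)}$ applied to $\Gamma^c\otimes x_1\otimes\cdots\otimes x_r$ is, by unwinding the reconstruction of $\delta$ from $\{\delta^{(j)}\}$, the class of the cycle $a_{\Gamma^c,(1,\dots,r)}$ from Equation \eqref{ecu : Massey cycle} — that is, $x$ — up to the sign $(-1)^\gamma$ already recorded there. This gives (i).

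For part (ii), take an arbitrary $\mathcal P_\infty$ structure $\delta$ on $H$ quasi-isomorphic to $A$, realised (after possibly composing with a $\mathcal P_\infty$-isomorphism of $H$, which does not change quasi-isomorphism class) by some coalgebra map $F:\mathcal P^\antishriek(H)\to \mathcal P^\antishriek(A)$ with $F_1$ a cycle-choosing map. The components $F_j$ for $j<n$ provide elements $F_j(\mu;x_{k_1},\dots)$ of $A$, and the transfer equation in weight $<n$ says exactly that these elements satisfy the defining-system relations of Definition \ref{def: Defining systems for operadic Massey products} — so $\{F_j(\mu;\dots)\}_{\mathrm{wgt}(\mu)<n}$ \emph{is} a $\Gamma^c$-defining system for some element $x' \in \langle x_1,\dots,x_r\rangle_{\Gamma^c}$. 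Now compare: $\delta^{(n)}[\Gamma^c\otimes x_1\otimes\cdots\otimes x_r] = [x']$ by the same unwinding as in (i). Both $x$ and $x'$ are classes in the same Massey product set, so they differ by the indeterminacy of $\langle x_1,\dots,x_r\rangle_{\Gamma^c}$, which — by the standard argument, comparing two defining systems with a common restriction in weight $1$ and inductively correcting — is exactly a sum of terms $\delta^{(i)}(\text{lower-weight data})$, i.e.\ lies in $\sum_{i=1}^{n-1}\operatorname{Im}(\delta^{(i)})$. Hence $\delta^{(n)}[\Gamma^c\otimes x_1\otimes\cdots\otimes x_r] = x + \Phi$ with $\Phi$ of the asserted form.

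The main obstacle I expect is bookkeeping rather than conceptual: one must match signs and permutation data throughout, since the coalgebra map $F$ reconstructed from $\{F_n\}$ via $\triangle$ carries the Koszul signs and the unshuffle combinatorics, and these must be shown to agree with the sign $\gamma$ of Equation \eqref{ecu : Massey cycle} and with the signs implicit in the definition of the indexing sets $I(\Gamma^c)$. The cleanest way around this is to phrase everything at the level of the cofree cooperad $\mathcal F^c(sE)$ and tree monomials, exactly as in the proof of Proposition \ref{Prop: cycle}, so that the vanishing of $G$ (equivalently $\operatorname{Im} G\subseteq R\circ\mathcal P^\antishriek$) does the heavy lifting and the signs are inherited once and for all from that computation. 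A secondary subtlety is the reduction step in (ii) allowing one to pass from an arbitrary quasi-isomorphic $\mathcal P_\infty$ structure to one arising from a map $F$ out of $\mathcal P^\antishriek(H)$ with $F_1$ cycle-choosing; this is where reducedness of $\mathcal P$ and Theorem \ref{teo : Homotopy Transfer Theorem} are used, together with the fact that $\mathcal P_\infty$-quasi-isomorphisms between transferred structures are invertible up to $\mathcal P_\infty$-homotopy, so the indeterminacy statement is unaffected.
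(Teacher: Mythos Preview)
Your approach to part (i) is essentially the paper's: both run the inductive transfer of Theorem \ref{teo : Homotopy Transfer Theorem}, choosing at each arity the free lift $F_n$ to be the corresponding element $a_{\mu,(k_1,\dots,k_i)}$ of a fixed defining system, and verifying inductively that $\delta^{(i)}$ vanishes on the relevant tensors so that the weight-$n$ output is exactly the Massey cycle. No meaningful difference here.

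Part (ii), however, has a genuine gap. Your central claim --- that for an \emph{arbitrary} $\mathcal P_\infty$ structure $\delta$ on $H$ with quasi-isomorphism $F$, the elements $\{F_j(\mu;x_{k_1},\dots)\}$ form a $\Gamma^c$-defining system --- is false. The transfer identity $F\circ\delta = \delta_A\circ F$, projected to $A$ in weight $j<n$, reads
\[
dF_j(\mu\otimes x_{k_1}\otimes\cdots) \;=\; \sum \zeta\bigl(F_{\bullet}(\zeta_1\otimes\cdots),\dots\bigr) \;-\; \text{(terms involving } \delta^{(i)}\text{ for }1<i\le j\text{)}.
\]
The extra terms on the right involve the higher components $\delta^{(i)}$ evaluated on sub-tuples, and these need not vanish: for instance in the $A_\infty$ case, even when the triple Massey product $\langle x_i,x_j,x_k\rangle$ contains zero, a \emph{particular} transferred $m_3$ gives only some element of that set, generally nonzero. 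So the defining-system relation $d(a_{\mu,\dots})=\sum\zeta(a_{\zeta_1,\dots},\dots)$ fails, and you never land in the Massey set to begin with. Your subsequent claim that the indeterminacy of $\langle x_1,\dots,x_r\rangle_{\Gamma^c}$ coincides with $\sum_{i<n}\operatorname{Im}(\delta^{(i)})$ is likewise unjustified: the indeterminacy is defined purely via the strict $\mathcal P$-structure on $H$, not via the chosen $\delta^{(i)}$.

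The paper sidesteps this entirely by invoking the $\mathcal P$-Eilenberg--Moore spectral sequence. The coalgebra map $F$ is filtered, induces the identity on $E^1\cong\mathcal P^\antishriek(H)$, and hence an isomorphism of spectral sequences from page one onward. Theorem \ref{Thm : Differentials EMSS and Massey products} then gives $d^{n-1}[\Gamma^c\otimes x_1\otimes\cdots\otimes x_r]=\pm[x]$ on the $A$-side, hence on the $H$-side; unwinding what $d^{n-1}$ means for the filtered complex $(\mathcal P^\antishriek(H),\delta_H)$ produces an element $\Phi'\in F_{n-1}\mathcal P^\antishriek(H)$ with $\delta_H(\Gamma^c\otimes\cdots+\Phi')=x$, and projecting via the counit yields the statement. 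The spectral-sequence route is what absorbs precisely the error terms your direct argument cannot control.
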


\begin{proof}
$(i)$    We will construct a $\mathcal{P}_\infty$ structure on $H$ recovering $x$ 
via the procedure established in the proof of Theorem \ref{teo : Homotopy Transfer Theorem}. 
We shall continue to use the notation of that proof. 
First, 
we choose a defining system $\left\{a_{\alpha}\right\}$ for the Massey product element $x\in \langle x_1, \dots, x_r \rangle_{\Gamma^c}$. 
We proceed by induction on arity, 
starting in arity 1 with $\delta^1$ initially defined as the coderivation 
corresponding to the strict $\mathcal P$-algebra structure on $H$ induced from it being the homology of $A$, 
and 
defining $f$ as any chain quasi-isomorphism $H\to A$ extending the 
choice 
$f(x_1) = a_{\idd, (1)}, f(x_2) = a_{\idd, (2)},\dots, f(x_r) = a_{\idd,(r)}.$ 
This defines a map $F_1:\mathcal{P}^{\antishriek}(1)\otimes H \to A$, 
since $\mathcal{P}^{\antishriek}(1)=\Bbbk.$ 
We give next the arity 2 step.
This step is not needed for the inductive procedure, 
but we include it because we think it sheds light on the general case.
Recall that the algorithm of Theorem \ref{teo : Homotopy Transfer Theorem} automatically determines the multiplication on $H$, 
but there are choices for $F_2$. 
First, we make the following observation. 
If $\left(s\mu, (i,j)\right)$ appears in the $\Gamma^c$-indexing system, 
then $\overline{\gamma_A}(\mu; x_i, x_j) =0.$ 
This is because $D\left(s\mu\right) = \left(\mu, \idd, \idd\right)$, and 
therefore $$da_{s\mu, (i,j)} = \kappa\left(s\mu\right)\left(a_{\idd, (i)},a_{\idd, (j)}\right),$$ 
which implies that $\overline{\gamma_A}(\mu; x_i, x_j)\in H$ admits a lift to $A$ which is a coboundary, 
which implies that it is 0 on homology. 
It therefore follows that $\left(F^{1}\circ \delta^{1}\right)_{2}$ is $0$ when applied to $[s\mu\otimes x_i\otimes x_j]$.
On the other hand, 
\[
\left(\delta^{1} \circ F^{1}\right)_{2}[s\mu\otimes x_i\otimes x_j]= \kappa \left(s\mu\right)\left(F_1\left(x_i\right), F_1\left(x_j\right)\right) = da_{s\mu, (i,j)}.
\]
So we choose $F_2:\mathcal{P}^{\antishriek}(2)\otimes H^{\otimes 2}\to A$ to extend the choice $F_2\left(\mu, x_i, x_j\right) = a_{\mu, (i,j)}.$
The general case is similar. 
Our inductive hypothesis has the following two parts:
\begin{equation}
\label{ecu: induction 1}
    F_l\left[\mu\otimes x_{i_1}\otimes \cdots \otimes x_{i_l}\right] = a_{\mu,\left(i_1,\dots, i_l\right)}, \textrm{where}
\left(\mu, \left(i_1,\dots i_l\right)\right) \in I\left(\Gamma^c\right) \textrm{and } l < n,
\end{equation}
and 
\begin{equation}
\label{ecu: induction 2}
    \delta^{k-1}_l\left[\mu\otimes x_{i_1}\otimes\cdots\otimes x_{i_l}\right]= 0 \textrm{, where } \left(\mu, \left(i_1,\dots i_l\right)\right) \in I\left(\Gamma^c\right)\textrm{ for }l \leq n.
\end{equation}
We verified these two items in the arity 2 case in the previous paragraph. 
Next, we shall compute $\left(\delta_A \circ F\right)_{n}\left[\mu\otimes x_{i_1}\otimes \dots\otimes x_{i_n}\right]$.
The map $\left(\delta_A \circ F\right)_{n}$ is precisely the composite
\[
\mathcal{P}^{\antishriek}\left(H\right) \xrightarrow{\Delta(H)} \mathcal{P}^{\antishriek}\circ \mathcal{P}^{\antishriek}\left(H\right) \xrightarrow{\mathcal{P}^{\antishriek}\left(f\right)}  \mathcal{P}^{\antishriek}\left(A\right) \xrightarrow{\kappa(A)}\mathcal{P}(A)\xrightarrow{\gamma_A}A.
\]
The arity $n$ component of $f$ is 0, 
and in particular $\mathcal{P}^{\antishriek}\left(f\right)\left(\idd; \mu\otimes x_{i_1}\otimes \dots\otimes x_{i_n} \right) = 0$. 
It follows that $\left(\delta_A \circ F\right)_{n}$ sends the class of 
$\mu\otimes x_{i_1}\otimes \dots\otimes x_{i_n}$ to the same element as the following map does:
\[
\mathcal{P}^{\antishriek}\left(H\right) \xrightarrow{\Delta^+} \mathcal{P}^{\antishriek}\circ \mathcal{P}^{\antishriek}\left(H\right) \xrightarrow{\mathcal{P}^{\antishriek}\left(f\right)}  \mathcal{P}^{\antishriek}\left(A\right) \xrightarrow{\kappa(A)}\mathcal{P}(A)\xrightarrow{\gamma_A}A.
\]
The map above is tightly related to the Massey inductive map $D$. 
Indeed, the image of $\mu\otimes x_{i_1}\otimes \dots\otimes x_{i_n}$ is given by 
\[
\sum \zeta\left(
f\left(\zeta_1\otimes x_{i_{\sigma^{-1}\left(1\right)}}\otimes
\dots
\otimes
x_{i_{\sigma^{-1}\left(v_1\right)}}\right),
\dots,
f\left(\zeta_m
\otimes
x_{i_{\sigma^{-1}\left(v_1+\cdots+ v_{m-1}+1\right)}}\otimes\cdots\otimes x_{i_{\sigma^{-1}(n)}}\right)
\right),
\]
where 
\[
D\left(\mu\right) = \sum \left(\zeta ; \zeta_1,\dots,\zeta_m; \sigma\right).
\]
By the first assumption of our inductive hypothesis \eqref{ecu: induction 1},
this is equal to 
\[
\sum \zeta \left(a_{\zeta_1,\left(i_{\sigma^{-1}\left(1\right)},\dots i_{\sigma^{-1}\left(v_1\right)}\right)},\dots,a_{\zeta_m,\left(i_{\sigma^{-1}\left(v_1+\cdots+ v_{m-1}+1\right)},\dots, i_{\sigma^{-1}(n)}\right)} \right).
\]
It follows from the definition of a defining system that this is equal to 
\[
da_{\mu, (x_{i_1}, \dots, x_{i_n})}.
\]
The second assumption of our inductive hypothesis \eqref{ecu: induction 2} implies 
that $\left(F^{n-1}\circ \delta^{n-1}\right)_{l}\left[\mu\otimes x_{i_1}\otimes\cdots\otimes x_{i_l}\right]=0$, 
so we have that
\[
\left(F\circ \delta^{n-1} -\delta_A \circ F\right)_{n}\left[\mu\otimes x_{i_1}\otimes \dots\otimes x_{i_n}\right] = -da_{\mu, (x_{i_1}, \dots, x_{i_n})}.
\]
Therefore, there is 
no obstruction to obtaining a lift $F_n$ such 
that $F_n(\mu\otimes x_{i_1}\otimes \dots\otimes x_{i_n})=a_{\mu, (x_{i_1}, \dots, x_{i_n})}.$ 
Notice that the algorithm also tells us 
that $\delta^n_n\left[\mu\otimes x_{i_1}\otimes \dots\otimes x_{i_n}\right] = 0$ (the projection of a boundary in homology).

Next, we shall verify that $ \delta^n_{n+1}\left(\mu\otimes x_{i_1}\otimes\cdots\otimes x_{i_{n+1}}\right) =0$ 
when $(\mu, (i_1, \dots, i_{n+1}))\in I\left(\Gamma^c\right)$. 
Because the arity $(n+1)$-component of $\delta^n$ comes from the
$\mathcal P$-algebra structure induced on $H$ from $A$, 
we have that
\[
\delta^n\left(\mu\otimes x_{i_1}\otimes\cdots\otimes x_{i_{n+1}}\right) =  \overline{\gamma_A}\left(\kappa\left(\mu\right); x_{i_1}, \dots, x_{i_{n+1}}\right).
\]
But if $\kappa\left(\mu\right)$ is non-zero,
then $\mu$ must be of weight 1. 
It then follows that $D\left(\mu\right) = (\mu; \idd,\dots, \idd)$.
So, by the same argument as in the arity 2 case, we conclude that 
$\overline{\gamma_A}\left(\kappa\left(\mu\right); x_{i_1}, \dots, x_{i_{n+1}}\right) = 0$.

\medskip
    
$(ii)$    Consider any $\mathcal P_\infty$ quasi-isomorphism $H\xrightarrow{\simeq} A$ 
and the corresponding quasi-isomorphism of $\mathcal P^{\antishriek}$-coalgebras
$$
\mathcal P^{\antishriek}(H) \xrightarrow{\simeq}\mathcal P^{\antishriek}(A).
$$
The induced morphism of $\mathcal P$-Eilenberg--Moore spectral sequences is, 
at the $E_1$ level,  
the identity on $ \mathcal P^{\antishriek}(H)$. 
By comparison, all the terms in both spectral sequences are also isomorphic. 
Now, it follows from Theorem \ref{Thm : Differentials EMSS and Massey products} that 
if $\langle x_1, \dots x_r \rangle_{\Gamma^c}$ is nonempty, 
then the element $[\Gamma^c\otimes x_1\otimes\cdots\otimes x_r]$ survives 
to the $(n-1)$-page $(E^{n-1}, d^{n-1})$. 
Moreover, given any $x\in \langle x_1, \dots x_r \rangle_{\Gamma^c}, $ 
one has 
$$
d^{n-1}\overline{\Gamma^c\otimes x_1\otimes\cdots\otimes x_r} = (-1)^{r}\overline{x}.
$$
Here, $\overline{\hspace{0.1cm} \cdot \hspace{0.1cm}}$ denotes the class in $E^{n-1}.$ 
In other words, there exists $\Phi\in F_{n-1}\mathcal P^{\antishriek}(H)$ such that
$$
\delta_H\left(\Gamma^c\otimes x_1\otimes\cdots\otimes x_r + \Phi\right) = x.
$$
Applying the counit $\epsilon_H:\mathcal P^{\antishriek}(H) \to  H$ to both sides, we obtain
$$
m_H\left(\Gamma^c\otimes x_1\otimes\cdots\otimes x_r + \Phi\right) = x.
$$

Write $m_H = \sum_{i\geq 2} \partial_H^{(i)}$,
and decompose $\Phi=\sum^{r-1}_{i=2}\phi_i$ with $\phi_i\in \mathcal{P}^{\antishriek}(H)^{(i)}.$ 
By a word length argument,
$$
\delta_H^{(n)}\left(\Gamma^c\otimes x_1\otimes\cdots\otimes x_r\right)   + \sum_{i=2}^{r-1}\delta_H^{(i)}(\phi_i) = x.
$$
This completes the proof.
\end{proof}

\bibliographystyle{plain}
\bibliography{MyBib}

\bigskip

\noindent\sc{Oisín Flynn-Connolly}\\ 
\noindent\sc{Université Sorbonne Paris Nord \\ Laboratoire de Géométrie, Analyse et Applications, LAGA \\ CNRS, UMR 7539, F-93430  Villetaneuse, France}\\
\noindent\tt{flynncoo@tcd.ie}\\

\noindent\sc{José Manuel Moreno Fernández}\\ 
\noindent\sc{Departamento de Álgebra, Geometría y Topología, \\ Universidad de Málaga, 29080, Málaga, Spain}\\
\noindent\tt{josemoreno@uma.es}\\

\end{document}